\providecommand{\U}[1]{\protect\rule{.1in}{.1in}}
\newtheorem{theorem}{Theorem}[section]
\newtheorem{corollary}[theorem]{Corollary}
\newtheorem{lemma}[theorem]{Lemma}
\newtheorem{proposition}[theorem]{Proposition}
\theoremstyle{definition}
\newtheorem{definition}[theorem]{Definition}
\newtheorem{remark}[theorem]{Remark}
\DeclareMathOperator{\EqFK}{Eq^{FK}}
\DeclareMathOperator{\EqFKe}{Eq^{FK}_\varepsilon}
\DeclareMathOperator{\EqFKn}{Eq^{FK}_{1/n}}
\newcommand{\fbar}{\bar{f}}
\newcommand{\eps}{\varepsilon}
\DeclareMathOperator{\diam}{diam}
\newcommand{\orb}{orb}
\renewcommand{\phi}{\varphi}
\newcommand{\Z}{{\mathbb{Z}}}
\newcommand{\N}{\mathbb{N}}
\newcommand{\Dom}{\mathcal{D}}
\newcommand{\Ran}{\mathcal{R}}
\newcommand{\dbar}{\bar{d}}
\newcommand{\bP}{\mathbf{P}}
\title{On topological models of zero entropy loosely Bernoulli systems}
\author{Felipe Garc\'{i}a-Ramos}
\address{CONACyT \& Instituto de Fisica,
Universidad Autónoma de San Luis Potosí (UASLP)
Av. Manuel Nava \#6, Zona Universitaria
San Luis Potosí, S.L.P., 78290, México}
\email{felipegra@yahoo.com}
\author{Dominik Kwietniak}
\address{Faculty of Mathematics and Computer Science,
  Jagiellonian University in Krakow, ul. {\L}ojasiewicza 6, 30-348 Krak\'ow, Poland.}
\email{dominik.kwietniak@uj.edu.pl}
\date{\today}
\begin{document}
\begin{abstract}
We provide a purely topological characterisation of uniquely ergodic topological dynamical systems (TDSs) whose unique invariant measure is zero entropy loosely Bernoulli (following Ratner,  we call such measures loosely Kronecker). At the heart of our proofs lies Feldman-Katok continuity (FK-continuity for short), that is, continuity with respect to the change of metric to the Feldman-Katok pseudometric. Feldman-Katok pseudometric is a  topological analog of f-bar (edit) metric for symbolic systems.
We also study an opposite of FK-continuity, coined FK-sensitivity.
We obtain a version of Auslander-Yorke dichotomies: minimal TDSs are either FK-continuous or FK-sensitive, and transitive TDSs are either almost FK-continuous or FK-sensitive.
\end{abstract}
\maketitle
\section{Introduction}
The fundamental problem in dynamics is to classify dynamical systems up to conjugacy. By dynamical system we mean a space $X$ with some structure and a map $T\colon X\to X$ that preserves the structure. Given two dynamical systems, $T\colon X\to X$ and $S\colon Y\to Y$, a conjugacy is given by invertible structure-preserving map $\phi\colon X\to Y$ that intertwines with the dynamics, that is, satisfies $S\circ\phi=\phi\circ T$.

We are interested in two classes of dynamical systems: measure-preserving systems studied in \emph{ergodic theory}, and topological dynamical systems, being the subject of \emph{topological dynamics}.
A \emph{measure-preserving system} (MPS for short) is a quadruple $(X,\Sigma_X,\mu,T)$, where
$\Sigma_X$ is a $\sigma$-algebra  and $\mu$ is a probability measure preserved by $T$ such that the probability space $(X,\Sigma_X,\mu)$ is a Lebesgue space. A \emph{topological dynamical system} (TDS for short) is a pair $(X,T)$, where $X$ is a compact metrisable space, and $T$ is continuous. The conjugacy in ergodic theory and topological dynamics is known as \emph{isomorphism} and \emph{(topological) conjugacy}, respectively.

There are many connections and parallels between MPSs and TDSs (for a survey of these relations see \cite{glasner2006interplay,huangsurvey}). In particular, a theorem due to Krylov and Bogoliubov says that for every TDS $(X, T)$, we can find a Borel $T$-invariant probability measure $\mu$ on $X$, and obtain a MPS  $(X,\bar{\mathcal{B}}_X,\mu,T)$ where $\mathcal{B}_X$ is the Borel $\sigma$-algebra on $X$, and $\bar{\mathcal{B}}_X$ is its completion  with respect to $\mu$. Conversely, the Jewett-Krieger theorem says that for every ergodic MPS $(X,\Sigma_X,\mu,T)$, 
we can find a TDS $(Y,S)$ such that $S$ has a unique invariant Borel probability measure $\nu$ and the MPSs $(X,\Sigma_X,\mu,T)$ and $(Y,\bar{\mathcal{B}}_Y,\nu,S)$ are isomorphic. 
In this situation, we say $(Y,S)$ is a \emph{topological model} for $(X,\Sigma_X,\mu,T)$. We summarise our discussion by saying that \emph{every ergodic MPS has a topological model}. Every topological model restricted to the support of its unique invariant measure is minimal, hence topologically transitive, furthermore, if the modelled MPS is mixing, then every TDS model must be topologically mixing; nonetheless, there exist uniquely ergodic topologically mixing TDSs where the invariant measure is not mixing. Are there any correspondences between isomorphism-invariant properties of ergodic MPSs and topological conjugacy-invariant properties of their topological models? Is it possible to give an intrinsic characterisation of topological models for certain classes of MPSs? That is, we are asking if we can describe all topological models of an isomorphism-invariant class of MPSs using a topological conjugacy-invariant property. Our main result characterises all topological models of loosely Bernoulli systems with zero entropy.  To our best knowledge, it is the first result of this kind. 
We will discuss similar works in Section \ref{sec:related}.

Before we present the results of this paper in more detail, let us first provide some background.
In 1932, von Neumann \cite{neumann1932operatorenmethode} proved that two ergodic measure-preserving systems 
with discrete spectrum (we call these systems \emph{Kronecker}) are isomorphic if and only if their induced (Koopman) operators are spectrally isomorphic. Later on, Halmos and von Neumann \cite{halmos1942operator} proved that every Kronecker MPS admits a minimal equicontinuous topological model $(X, T)$ (i.e. $\{T^n: n\ge 0\}$ is an equicontinuous family of maps from $X$ to $X$). 
The other remarkable achievement on the isomorphism problem is Ornstein's theory, whose conclusion says that any two Bernoulli processes of equal entropy are
isomorphic \cite{ornstein70a,ornstein70b,ornstein70c} 
At the heart of Ornstein's theory lies the definition of a finitely determined process, based on the normalised Hamming distance, $\dbar_n$, between finite sequences $x_0x_1\ldots x_{n-1}$ and $y_0y_1\ldots y_{n-1}$, where 
\[
\dbar_n(x_0x_1\ldots x_{n-1},y_0y_1\ldots y_{n-1})=\frac{1}{n}\left|\{0\le j< n: x_j\neq y_j\}\right|.
\]
With the above two exceptions, the isomorphism problem seems to be hopeless \cite{foreman2004anti,foreman2011conjugacy}.

A notion of isomorphism is \emph{Kakutani equivalence} defined in \cite{kakutani43}. There is a theory, similar to Ornstein's, classifying MPSs Kakutani equivalent to either, Bernoulli or Kronecker systems (the theory is mostly due to Ornstein, Rudolph and Weiss \cite{ornsteinrudolphweiss82}, but Katok \cite{katok77} had independently obtained the same results for Kronecker systems).
Surprisingly, in this theory Kronecker and Bernoulli MPSs belong to the same family of \emph{loosely Bernoulli systems} introduced by Feldman \cite{feldman76}. Feldman followed Ornstein's approach, but replaced $\dbar_n$  in the definition of finitely determined process by the \emph{edit distance} $\fbar_n$. In this way, Feldman defined  \emph{finitely fixed process}, equivalently \emph{loosely Bernoulli systems}. Every Bernoulli process is loosely Bernoulli, but quite surprisingly there are also loosely Bernoulli processes of zero entropy. The $\fbar_n$ distance  between  $x_0x_1\ldots x_{n-1}$ and $y_0y_1\ldots y_{n-1}$ is equal to $1$ minus the fraction of letters one has to delete from each word to obtain identical (possibly empty) words; that is,
\[\fbar_n(x_0x_1\ldots x_{n-1},y_0y_1\ldots y_{n-1})=1-k/n,\]
where $k$ is the largest integer such that for some $0\le i(1)<i(2)<\ldots<i(k)<n$ and $0\le j(1)<j(2)<\ldots<j(k)<n$ it holds $x_{i(s)}=y_{j(s)}$ for $s=1,\ldots,k$.
By Abramov's formula, Kakutani equivalence preserves Kolmogorov-Sinai entropy only up to a multiplication by a positive constant. Therefore, there are at least three Kakutani equivalence classes of loosely Bernoulli processes as processes with zero, positive but finite, and infinite entropy must belong to different classes. It turns out that there are exactly three classes \cite{ornsteinrudolphweiss82} and a MPS is Kakutani equivalent to a Kronecker system if and only if it is loosely Bernoulli system and has zero entropy (\cite{ornsteinrudolphweiss82} and \cite{katok77})). Following a suggestion of Ratner (reported in \cite{feldman1979reparametrization}), we refer to zero entropy loosely Bernoulli systems as \emph{loosely Kronecker} (note that Katok \cite{katok77}) called these systems \emph{standard}).

Recently, the first author characterised ergodic measures of TDS that are isomorphic to Kronecker MPSs using a weak form of equicontinuity called \emph{$\mu$-mean equicontinuity} \cite{weakforms} (also see \cite{huang2018bounded}). The latter property is a weakening of mean equicontinuity introduced by Fomin \cite{fomin}. It has been studied in \cite{auslander1959,oxtoby,lituye,weakforms,garcia2017dynamical,downarowiczglasner,huang2018bounded,garciajagerye} (also, see the survey \cite{lisurveymean}). Mean equicontinuous systems are defined using the \emph{Besicovitch pseudometric} (see Section 2.1), a topological version of the $\dbar$ pseudometric which appears naturally in dynamics (see papers cited above and \cite{downarowicz2014,kwietniak2017}). Mean equicontinuous TDSs are a proper sub-class of topological models for Kronecker systems which can be characterised naturally \cite{lituye,downarowiczglasner}.

Since Besicovitch pseudometric, related to $\dbar$, proved to be a useful tool for characterising Kronecker systems, it is natural to expect that in order to study loosely Kronecker systems one should find a pseudometric which is a topological version of the edit distance, $\fbar$. The relation of a new pseudometric to $\fbar$ should resemble how the Besicovitch distance is a continuous analogue of the $\dbar$-distance. Such a pseudometric was introduced by {\L}{\k{a}}cka and the second author in \cite{kwietniaklacka17} who coined it the \emph{Feldman-Katok pseudometric}. Here we denote it by $\rho_{FK}$ and use it to characterise loosely Kronecker MPSs and their topological models.
\begin{theorem}[Theorems \ref{LKcharac} and \ref{uniquelyerg} combined]\label{thm:main_comb}
    Let $(X,T)$ be a TDS and $\mu$ be  an ergodic $T$-invariant Borel probability measure. Let $\mathcal{B}_X$ be the Borel $\sigma$-algebra on $X$ and let $\bar{\mathcal{B}}_X$ stand for the completion of ${\mathcal{B}}_X$ with respect to $\mu$.
\begin{enumerate}
  \item The MPS $(X,\bar{\mathcal{B}}_X,\mu,T)$ is loosely Kronecker (zero entropy loosely Bernoulli) if and only if there exists a Borel set $M\subset X$ with $\mu (M)=1$ such that $\rho_{FK}(x,y)=0$ for every $x,y\in M$.\label{first:main}
  \item $(X,T)$ is a (uniquely ergodic) topological model of a loosely Kronecker MPS if and only if $\rho_{FK}(x,y)=0$ for every $x,y\in X$.\label{second:main}
\end{enumerate}
\end{theorem}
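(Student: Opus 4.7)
The argument splits naturally along the two halves: part (1) is the ergodic-theoretic core, and part (2) upgrades it to a fully topological statement via unique ergodicity.

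For part (1), the plan is to reduce to the symbolic setting. Given $\eps>0$, I would choose a finite Borel partition $\mathcal{P}$ of $X$ with pieces of diameter less than $\eps$, consider the associated coding $\pi_\mathcal{P}\colon X\to\mathcal{P}^{\Zp}$, and establish a comparison of the form
\[
\rho_{FK}(x,y)\le \limsup_{n\to\infty}\bar f_n\bigl(\pi_\mathcal{P}(x)|_{[0,n)},\pi_\mathcal{P}(y)|_{[0,n)}\bigr)+\eps,
\]
together with a matching lower bound as $\mathcal{P}$ refines. The next ingredient is the classical symbolic characterisation, essentially due to Feldman and Ornstein--Rudolph--Weiss: a symbolic MPS is loosely Kronecker iff the $\bar f_n$-distance of initial segments vanishes for $\mu\times\mu$-a.e.\ pair. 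For the forward direction of part (1), pick a sequence of partitions $\mathcal{P}_k$ that generate $\bar{\mathcal{B}}_X$ mod $\mu$ with $\diam\mathcal{P}_k\to 0$, and intersect over $k$ the full-measure sets on which the symbolic $\bar f$-distances vanish; the resulting $M$ satisfies $\rho_{FK}\equiv 0$ on $M\times M$. For the reverse direction, such an $M$ transfers the a.e.\ vanishing to every generating symbolic factor, forcing both loose Bernoullicity and zero entropy (the latter because positive entropy obstructs the a.e.\ vanishing of $\bar f$ in a generator).

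For part (2), the backward direction is essentially a corollary of part (1): if $\rho_{FK}(x,y)=0$ for all $x,y\in X$, then for every $f\in C(X)$ the Birkhoff averages at any two points are asymptotically equal, so no two distinct ergodic measures can live on $X$, yielding unique ergodicity; then part (1) applied with $M=X$ identifies the unique measure as loosely Kronecker. The nontrivial forward direction requires upgrading the a.e.\ vanishing from part (1) to pointwise vanishing everywhere. The key observation is that under unique ergodicity, the empirical measures $n^{-1}\sum_{j<n}\delta_{T^j x}$ converge weakly to $\mu$ uniformly in $x\in X$, and $\rho_{FK}(x,y)$ can be bounded in terms of these empirical distributions; the uniform convergence then passes the vanishing of $\rho_{FK}$ from $M\times M$ to all of $X\times X$.

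The main obstacle is the comparison lemma between $\rho_{FK}$ and the symbolic $\bar f$-pseudometric: one must control the discrepancy coming from the fact that partition pieces have measurable (not continuous) boundaries, which calls for a Rokhlin-tower or castle approximation to handle $\mu$-null boundary effects uniformly across a diameter-shrinking sequence of partitions. A secondary difficulty lies in the quantitative use of unique ergodicity in part (2): one must translate weak-$*$ convergence of empirical measures into uniform control of the edit-distance-based $\rho_{FK}$, which is not a weak-$*$ continuous object and therefore requires a separate approximation step tying $\rho_{FK}$ to finitely many continuous test functions.
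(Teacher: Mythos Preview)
For part~(1) your plan is sound and tracks the paper closely in one direction: the reverse implication ($\rho_{FK}\equiv 0$ on a full-measure set $\Rightarrow$ loosely Kronecker) in the paper also runs through Katok's criterion via an auxiliary partition with a ``separation property'' that plays exactly the role of your boundary control. For the forward implication the paper takes a different but equally valid route: instead of invoking the symbolic a.e.-pair characterisation, it uses the definition of loosely Kronecker as Kakutani equivalence to a rotation and builds $(n,\eps)$-matches directly out of orbits in the isometric model, via Lusin's theorem and the induced-map correspondence. Your route should also work, though your ``matching lower bound'' is not actually needed for this direction (only the inequality $\rho_{FK}\le \text{symbolic }\bar f+\eps$ is used to go from loosely Kronecker to $\rho_{FK}\equiv 0$).

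For part~(2) the backward direction is fine and agrees with the paper. The forward direction, however, has a real gap as stated. The assertion that ``$\rho_{FK}(x,y)$ can be bounded in terms of these empirical distributions'' is false: two points can have identical empirical measures yet $\rho_{FK}(x,y)>0$ --- this is exactly what happens for $\mu$-generic pairs in any positive-entropy uniquely ergodic system. Hence no argument that merely propagates weak-$*$ information can pass the vanishing from $M\times M$ to $X\times X$; the loosely-Kronecker hypothesis must be used again at the pointwise stage, not just to produce $M$. The paper does this by returning to Katok's criterion directly: one chooses a finite partition $\mathcal{P}$ of small diameter whose atoms have $\mu$-null boundary (so that, by unique ergodicity, \emph{every} point of $X$ is generic for every $\mathcal{P}^n$-atom), applies Katok's criterion to obtain a set $\mathcal{W}_n$ of $\mathcal{P}^n$-names of measure $>1-\eps$ that are pairwise $\bar f_n$-close, and then uses universal genericity to see that arbitrary $y,z\in X$ have orbits mostly covered by non-overlapping occurrences of $\mathcal{W}_n$-blocks, from which an explicit $(k,\eps)$-match is assembled. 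This is a direct construction for every pair, not an extension from a full-measure set, and the Rokhlin-tower machinery you anticipate is not needed --- the null-boundary partition already handles the boundary issue.
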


We call systems satisfying the second part of Theorem \ref{thm:main_comb} \emph{topologically loosely Kronecker}.

The first part of Theorem \ref{thm:main_comb} is an analogue of Corollary 39 from \cite{weakforms} (see also Theorem \ref{chakro}) for Kakutani equivalence, but our proof uses different techniques. The second part of Theorem \ref{thm:main_comb} (see also Theorem \ref{uniquelyerg} and Corollary \ref{cor}) is much more surprising. It does not have any direct analogue for the Besicovitch pseudometric as no purely topological characterisation of models of Kronecker MPSs is known. The notion of $\mu$-mean equicontinuity, used to characterise Kronecker MPSs in \cite{weakforms}, is a topological/measure-theoretic hybrid (for more information see the discussion after Corollary \ref{cor}).

In the literature, there are several examples of uniquely ergodic TDSs with the unique invariant measure being loosely Kronecker systems  (see Section 6.1). Our results indicate that these systems must be topologically loosely Kronecker, thus providing new information on their dynamics. Furthermore, the notion of Feldman-Katok continuity leads us to an opposite notion of Feldman-Katok sensitivity, and we obtain a new classification of minimal (and transitive) TDSs. Recall first that the opposite (but not mutually exclusive in general) of equicontinuity is \emph{sensitivity to initial conditions} (or simply sensitivity). A TDS $(X,T)$ is \emph{sensitive} if there exists $\varepsilon>0$ such that for every nonempty open set $U\subset X$, there exist $n>0$ and $x,y\in U$ such that $d(T^nx,T^ny)>\varepsilon$. Auslander and Yorke \cite{auslander1980} showed that for minimal TDSs these notions do complement one other, that is, a minimal TDS is equicontinuous if and only if it is not sensitive. A somewhat relaxed version of Auslander-Yorke dichotomy holds for transitive systems \cite{akin1996transitive}.
Similarly, using the Besicovitch pseudometric one can define \emph{mean equicontinuity} and \emph{mean sensitivity} and show that these notions are also mutually exclusive for minimal systems and almost mutually exclusive for transitive TDSs. We replace Besicovitch pseudometric with the Feldman-Katok pseudometric in the definitions of mean equicontinuity/sensitivity and we introduce \emph{Feldman-Katok continuity} and a new notion of chaotic sensitivity, coined \emph{FK-sensitivity}. We also prove new Auslander-Yorke type dichotomies for minimal and transitive systems (Theorem \ref{thm:duality}).
As a consequence, we obtain new information on the dynamics of minimal TDSs with positive topological entropy: we prove they are always FK-sensitive (Corollary \ref{cor:min-pos-ent}). To our best knowledge, FK-sensitivity is the strongest form of sensitivity appearing in all minimal TDSs with positive entropy. It might be slightly surprising for the reader accustomed to Besicovitch pseudometric, that a transitive TDS is Feldman-Katok continuous if and only if it is topologically loosely Kronecker (that is, $\rho_{FK}(x,y)=0$ for all $x,y\in X$, as in the second part of Theorem \ref{thm:main_comb}).

\subsection{Related results}\label{sec:related}
In general, there is no apparent connection between properties of measure-preserving systems and their topological models. It is known that every ergodic (in particular non-mixing) MPS has a topological model which is topologically mixing \cite{lehrer1987topological}. In particular, this implies that any ergodic nilsystem/distal/rigid MPS admits a topological model that is not a topological nilsystem/distal/rigid. By the variational principle for topological entropy, the measure-theoretic entropy of a MPS equals the topological entropy of any of its topological models. However, this correspondence fails even for related notions like completely positive entropy \cite{glasner1994strictly}, sequence entropy, and other notions of low complexity \cite{kerr2007independence,fuhrmann2020,fuhrmann2018irregular}.

Also, it is known that every Kronecker MPS is measurable distal, and every measurable distal MPS is loosely Kronecker, but each property has a distinct relation to the topological dynamics of their models. There are many properties that can be called ``topological Kronecker'' for minimal TDSs: equicontinuity, nullness, tameness, regularity, and mean equicontinuity. It turns out that there exist topological models for Kronecker MPSs \emph{properly} in each of these classes (equicontinuous; null, but not equicontinuous; tame, but not null; etc., see \cite{halmos1942operator,kerr2007independence,fuhrmann2020,fuhrmann2018irregular,garciajagerye}) and there are even models lying outside these classes \cite{lehrer1987topological,garcia2017dynamical}. When it comes to measurably distal MPSs there is only one topological analog: \emph{topological distality}. Lindenstrauss \cite{lindenstrauss99} proved that there exist ergodic distal MPSs that admit no topological distal model; nonetheless, he showed that every ergodic distal MPS admits a topological distal realisation (a distal, not necessarily uniquely ergodic TDS with an invariant measure isomorphic to an original MPS). The motivation for that work came from Beleznay and Foreman's questions regarding descriptive set-theoretical properties of distal systems (see \cite{foreman95}). This line of research has many important applications, but discussing them here would drift as away from our main topic, therefore we refer the reader to \cite{gutman2019strictly} and references therein.

\subsection{Organisation of the paper}
The rest of this paper is organised as follows. Section \ref{sec:basic} contains basic preliminaries and the definition of the Feldman-Katok pseudometric. Section \ref{sec:FK-cont} contains the definition of topological and measure-theoretic Feldman-Katok continuity and sensitivity, and their basic properties. Sections \ref{sec:LK} and \ref{sec:TLK} contain the main proofs of the main results of the paper. In Section \ref{sec:examples} we give examples and applications of our results. In the appendix we present some proofs for completeness.

\section{Basic definitions and notation}\label{sec:basic}

In this section, after setting up some notation and terminology, we recall the definition of Feldman-Katok pseudometric. 

\subsection{Topological dynamics}
Let $X$ be a metric space. The closed ball of radius $\varepsilon>0$ around $x\in X$ will be denoted by $B_{\varepsilon}(x)$.
Let $\mu$ be a Borel measure on $X$. The collection of Borel subsets of $X$ (respectively, Borel subsets of $X$ with positive measure)  will be denoted by $\mathcal{B}_{X}$ (respectively, by $\mathcal{B}^+_{X}$).

We say a pair, $(X,T)$, is a \textbf{topological dynamical system (TDS)} if $X$ is a compact metrisable space (with a compatible metric $d$) and $T\colon X\rightarrow X$ is a continuous map. We denote the (forward) \textbf{orbit} of $x$ by $\orb(x):=\{T^n(x):n\ge 0\}$.
A TDS $(X,T)$ is:
\begin{itemize}
  \item \textbf{minimal} if for every $x\in X$, the set $orb(x)$ is dense in $X$,
  \item \textbf{transitive} if for every pair of nonempty open sets $U,V\subset X$, there exists $n>0$ such that $T^{-n}U\cap V\neq \emptyset$,
      \item \textbf{uniquely ergodic} if there is only one $T$-invariant Borel probability measure, and
      \item \textbf{strictly ergodic} if it is minimal and uniquely ergodic.
\end{itemize}

The \textbf{support} of a Borel measure is the smallest closed set of full measure. The support of an invariant measure is \textbf{full}, if it is equal to $X$.
A uniquely ergodic TDS is minimal if and only if the $T$-invariant Borel measure has full support.

Recall that two TDSs, $(X,T)$ and $(X',T')$, are \textbf{topologically conjugate} if there exists a bijective continuous function $\phi\colon X\rightarrow X'$ such that $\phi\circ T=T'\circ\phi$.
\subsection{Ergodic theory}
We say $(X,\Sigma_X,\mu,T)$ is a \textbf{measure preserving system (MPS)} if $(X,\Sigma_X,\mu)$ is a Lebesgue probability space and $T\colon X\rightarrow X$ is a measure-preserving transformation. If $X$ is a metric space and $T$ is continuous, $\Sigma_X$ will always denote the completion of the Borel sigma-algebra with respect to $\mu$ and we will omit writing it, that is we will write simply $(X,\mu,T)$.
A $T$-invariant probability measure $\mu$ is \textbf{ergodic} if every invariant measurable set has measure one or zero. In this case, we also say the associated MPS is \textbf{ergodic}.

Two MPSs, $(X,\Sigma_X,\mu,T)$ and $(X',\Sigma_X',\mu',T')$, are \textbf{isomorphic} if there exists a bijective bi-measure-preserving function $\phi\colon X\rightarrow X'$ such that $\phi\circ T=T'\circ\phi$.
A MPS is \textbf{Kronecker} if it is isomorphic to a rotation on a compact abelian group equipped with the Haar measure.
\begin{definition}
	\label{def:kakutani}
Two MPSs $(X,\Sigma,\mu,T)$ and $(X^{\prime},\Sigma^{\prime},\mu^{\prime},T^{\prime})$ are \textbf{Kakutani equivalent} if there exist measurable
sets $A\subset X$ and $A^{\prime}\subset X^{\prime}$ with $\mu(A)\cdot\mu'(A')>0$ such that $(A,T_{A}%
,\mu_{A})$ and  $(A^{\prime},T_{A^{\prime}},\mu_{A^{\prime}})$ are isomorphic,
where $T_{A}\colon A\rightarrow A$ denotes the induced transformation (or the first return
map) and $\mu_{A}$ is the induced measure on $A$.
\end{definition}

From the definition it is clear that if two systems are isomorphic, then they must be Kakutani equivalent. The converse is false \cite{feldman76}.
\begin{definition}
	A MPS is \textbf{loosely Kronecker} if it is Kakutani equivalent to a Kronecker MPS.
\end{definition}

\begin{remark}
\begin{enumerate}
  \item A MPS is loosely Kronecker if and only if it is loosely Bernoulli and has zero entropy \cite{katok77}.
  \item  Any two Kronecker MPSs are Kakutani equivalent \cite{ornsteinrudolphweiss82}.
  \item The sets $A$ and $A'$ (in Definition \ref{def:kakutani}) can be chosen to have arbitrarily large (but not full) measure, and even one of them could have full measure \cite{katok77,ornsteinrudolphweiss82}.
  \item It follows from Abramov's formula for the entropy of an induced transformation that if two MPSs are Kakutani equivalent then either both have zero entropy, or both have positive finite entropy, or both have infinite entropy.

\end{enumerate}
\end{remark}

\subsection{Dynamical pseudometrics}

Let $(X,T)$ be a TDS. For $x,z\in X$ we define the \textbf{Besicovitch
	pseudometric} $\rho_{B}$ on $X$ as
\[
\rho_{B}(x,z)=\limsup\limits_{n\rightarrow\infty}\frac{1}{n}\sum_{j=0}%
^{n-1}d(T^{j}x,T^{j}z).
\]
Using standard techniques (for example, see \cite{auslander1959, kwietniak2017} or \cite[Lemma 3.1]{lituye})
one can see that $\rho_{B}$ is uniformly equivalent to another pseudometric on $X$ given by
\[
\rho_{B}^{\prime}(x,z)=\inf\{\delta>0:\overline{D}\{n\geq0:d(T^{n}x,T^{n}%
z)\geq\delta\}<\delta\},
\]
where $\overline{D}(S)$ stands for the \textbf{upper asymptotic density} of $S\subset \N_0$, that is,
\[
\overline{D}(S):=\limsup_{n\to\infty}\frac{\left|S\cap\{0,1,\ldots,n-1\}\right|}{n}.\]
Actually, in this paper we will only use the Besicovitch pseudometric to compare our results using the Feldman-Katok pseudometric with some similar ones based on $\rho_B$.

We will also use \textbf{lower asymptotic density defined} as
\[
\underline{D}(S):=\liminf_{n\to\infty}\frac{\left|S\cap\{0,1,\ldots,n-1\}\right|}{n}.\]

Now we will define the Feldman-Katok pseudometric following \cite{kwietniaklacka17}. For $x,z\in X$, $\delta>0$
and $n\in\N$, we define an $(n,\delta)$\textbf{-match} of $ x$ and $z$ to be an order preserving
	bijection $\pi\colon\Dom(\pi)\rightarrow\Ran(\pi)$ such that $\Dom(\pi
	),\Ran(\pi)\subset\{0,1,\ldots,n-1\}$ and for every $i\in\Dom(\pi)$ we have
	$d(T^{i}x,T^{\pi(i)}z)<\delta$.
	The \textbf{fit} $|\pi|$ of an $(n,\delta)$-match $\pi$ is the cardinality of
	$\Dom(\pi)$.
	 We set
	\[
	\fbar_{n,\delta}(x,z)=1-\frac{\max\{|\pi|:\text{ $\pi$ is an $(n,\delta
			)$-match of $x$ with $z$}\}}{n}.
	\]
	
\begin{definition}
	The \emph{$\fbar_{\delta}$}\textbf{-pseudodistance }between $x$ and $z$ is
	given by
	\[
	\fbar_{\delta}(x,z)=\limsup_{n\rightarrow\infty}\fbar_{n,\delta}(x,z).
	\]
\end{definition}

\begin{definition}
	The \textbf{Feldman-Katok pseudometric of }$(X,T)$\textbf{ } is given by
	\[
	\rho_{FK}(x,z)=\inf\{\delta>0:\fbar_{\delta}(x,z)<\delta\}.
	\]
\end{definition}

\begin{lemma}
	[Fact 22, Lemma 25, and Fact 17 \cite{kwietniaklacka17}] \label{lem:fk-prop} Let $(X,T)$ be a TDS. We have that
	\begin{enumerate}
		\item $\rho_{FK}$ is indeed a pseudometric,
		\item $\rho_{FK}(x,z)\leq\rho_{B}^{\prime}(x,z)$, hence the topology introduced by $\rho_{FK}$ is weaker than topology given by $\rho_{B}$, and
		\item $\rho_{FK}$ is invariant along the orbits, that is, for every $n\ge 0$ and $x,y\in X$ we have that $\rho_{FK}(T^nx,y)=\rho_{FK}(x,y)$. \label{fk-prop-iii}
\end{enumerate}	
\end{lemma}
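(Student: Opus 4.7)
The plan is to argue directly from the definitions of $(n,\delta)$-match and fit. Non-negativity, symmetry (since if $\pi$ is an $(n,\delta)$-match of $x$ with $z$ then $\pi^{-1}$ is an $(n,\delta)$-match of $z$ with $x$ of the same fit), and $\rho_{FK}(x,x)=0$ (witnessed, for every $\delta>0$, by the identity on $\{0,\ldots,n-1\}$) are immediate, so the only substantive point in part (1) is the triangle inequality.

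For this I would first establish the two-parameter version
\[
\fbar_{\delta_1+\delta_2}(x,z)\leq \fbar_{\delta_1}(x,y)+\fbar_{\delta_2}(y,z).
\]
Given an $(n,\delta_1)$-match $\pi$ of $x$ with $y$ and an $(n,\delta_2)$-match $\sigma$ of $y$ with $z$, the key observation is that $\sigma\circ\pi$, restricted to $D:=\pi^{-1}(\Ran(\pi)\cap\Dom(\sigma))$, is an order-preserving bijection with domain and range inside $\{0,\ldots,n-1\}$ and, by the triangle inequality in $X$, satisfies $d(T^i x,T^{\sigma(\pi(i))}z)<\delta_1+\delta_2$ for every $i\in D$. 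Since $\Ran(\pi)$ and $\Dom(\sigma)$ both lie in the $n$-element set $\{0,\ldots,n-1\}$, inclusion-exclusion yields $|D|=|\Ran(\pi)\cap\Dom(\sigma)|\ge |\pi|+|\sigma|-n$, which rearranges to $\fbar_{n,\delta_1+\delta_2}(x,z)\leq\fbar_{n,\delta_1}(x,y)+\fbar_{n,\delta_2}(y,z)$; taking $\limsup_n$ gives the displayed inequality. To pass from this to the triangle inequality for $\rho_{FK}$, I would suppose $\rho_{FK}(x,y)<\alpha$ and $\rho_{FK}(y,z)<\beta$, pick $\delta_1<\alpha$ and $\delta_2<\beta$ witnessing the infima, and conclude $\fbar_{\delta_1+\delta_2}(x,z)<\delta_1+\delta_2<\alpha+\beta$, so $\rho_{FK}(x,z)\leq\delta_1+\delta_2<\alpha+\beta$; infimising over $\alpha,\beta$ finishes (1). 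This composition-of-matches step, and specifically the inclusion-exclusion bound on the fit of $\sigma\circ\pi$, is the main obstacle.

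For (2), given $\delta>\rho_B^{\prime}(x,z)$ the set $S:=\{i\ge 0:d(T^i x,T^i z)<\delta\}$ has $\underline{D}(S)>1-\delta$, so the identity restricted to $S\cap\{0,\ldots,n-1\}$ is an $(n,\delta)$-match of $x$ with $z$ of fit $|S\cap\{0,\ldots,n-1\}|$. Then $\fbar_{n,\delta}(x,z)\le 1-|S\cap\{0,\ldots,n-1\}|/n$, and passing to $\limsup_n$ gives $\fbar_\delta(x,z)\le 1-\underline{D}(S)<\delta$, so $\rho_{FK}(x,z)\le\delta$; taking the infimum over such $\delta$ yields $\rho_{FK}\le\rho_B^{\prime}$.

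For (3) it suffices by iteration to treat $n=1$. Given an $(N,\delta)$-match $\pi$ of $x$ with $y$, discard the (at most two) pairs with $i=0$ or $\pi(i)=N-1$ and then shift indices on the $x$-side by $-1$; the result is an $(N-1,\delta)$-match of $Tx$ with $y$ of fit at least $|\pi|-2$, giving $\fbar_{N-1,\delta}(Tx,y)\le \fbar_{N,\delta}(x,y)+2/(N-1)$ and hence $\fbar_\delta(Tx,y)\le\fbar_\delta(x,y)$. Conversely, given an $(N,\delta)$-match $\pi$ of $Tx$ with $y$, shifting indices on the $x$-side by $+1$ produces an $(N+1,\delta)$-match of $x$ with $y$ of the same fit, yielding $\fbar_\delta(x,y)\le\fbar_\delta(Tx,y)$. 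Equality of $\fbar_\delta$ for all $\delta>0$ then forces $\rho_{FK}(Tx,y)=\rho_{FK}(x,y)$, and iterating in $n$ gives the full statement.
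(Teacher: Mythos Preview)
Your proof is correct. The paper does not give its own proof of this lemma---it simply cites Fact~22, Lemma~25, and Fact~17 of \cite{kwietniaklacka17}---so there is nothing in the present paper to compare against; your direct argument from the definitions (composing matches with the inclusion--exclusion fit bound for the triangle inequality, the identity match on $\{i:d(T^ix,T^iz)<\delta\}$ for the comparison with $\rho_B'$, and the index-shift for orbit invariance) is exactly the standard route and is presumably what the cited reference does.

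Two very minor remarks. In part~(3), when passing from an $(N,\delta)$-match of $x$ with $y$ to one of $Tx$ with $y$, you could alternatively keep the target length $N$ (rather than dropping to $N-1$) and then only the index $i=0$ needs discarding; either bookkeeping choice gives the same $\limsup$ conclusion. In part~(2), your sentence ``the set $S$ has $\underline{D}(S)>1-\delta$'' silently uses the monotonicity of the defining condition for $\rho_B'$ in $\delta$ (so that $\delta>\rho_B'(x,z)$ forces $\overline{D}\{i:d(T^ix,T^iz)\ge\delta\}<\delta$); this is immediate but worth one clause if you write it out in full.
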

We will write $B^{FK}_\eps(y)$ for the closed $\rho_{FK}$ ball of radius $\eps$ around $y\in X$.
\begin{remark}
The last property stated in Lemma \ref{lem:fk-prop} seems to be the main source of differences between $\rho_{FK}$ and $\rho_{B}$ as the property is false for the latter pseudometric. To see that let $X=\{0,1\}^\Z$, $T:X\rightarrow X$ be the shift map and $x=(01)^\infty$. One can check that
\[
 \rho_B(x,\sigma(x))>0.
\]
\end{remark}

\section{FK-continuity and sensitivity}\label{sec:FK-cont}
Using the Feldman-Katok pseudometric, we define topological and measure theoretic Feldman-Katok continuity and sensitivity and we provide some basic observations about these properties. We will also see how they relate to mean equicontinuity and mean sensitivity.
\subsection{Equicontinuity and FK-continuity}

Recall that a TDS $(X,T)$ is \textbf{equicontinuous} if $\{T^n: n\ge 0\}$ is an equicontinuous family.

Equicontinuity is a very strong form of stability. For example every equicontinuous subshift (or more generally an expansive TDS) is periodic. The typical example of an equicontinuous system is a rotation on a compact abelian group. Note that a TDS $(X,T)$ is equicontinuous if and only if the map
\[
(X,d)\ni x\mapsto x\in (X,\rho^T)
\]
is continuous where $\rho^T$ is a metric on $X$ given for $x,y\in X$ by
\[
\rho^T(x,y)=\sup_{n\ge 0}d(T^nx,T^ny).
\]
Replacing $\rho^T$ with another dynamically generating (pseudo)metric we obtain new variants of equicontinuity.
We say $x\in X$ is a \textbf{mean equicontinuity point} for $(X,T)$ if for every
$\varepsilon>0$, there exists $\delta>0$ such that if $d(x,y)\leq\delta$ then
$\rho_{B}(x,y)\leq\varepsilon$. 
There are many minimal \textbf{mean equicontinuous} TDSs (every point is a mean equicontinuity point) that are not equicontinuous, but all of them are uniquely ergodic and have zero topological entropy. Nonetheless, within the class of mean equicontinuous systems we can distinguish a whole hierarchy of different (not conjugate) dynamical behaviours \cite{garciajagerye}.

A TDS is \textbf{almost mean equicontinuous} if the mean equicontinuity points form a \textbf{residual} set (countable intersection of dense open sets). There exists almost mean equicontinuous TDSs with positive entropy (and other properties); see \cite[Corollary 4.8]{lituye}, \cite[Theorem 1.4]{garcia2017dynamical}, \cite{li2018chaotic}, and \cite[Corollary 5.7]{garciajagerye}.

Inspired by the previous definition, we introduce the following.
\begin{definition}
Let $(X,T)$ be a TDS. We say that $x\in X$ is an \textbf{Feldman-Katok continuity point (FK-continuity point)} for $(X,T)$ if for every
	$\varepsilon>0$ there exists $\delta>0$ such that if $d(x,y)\leq\delta$ then
	$\rho_{FK}(x,y)\leq\varepsilon$. We denote the set of FK-continuity points by $\EqFK$.
\end{definition}

\begin{definition}	
	We call a TDS $(X,T)$ \textbf{FK-continuous} if every $x\in X$ is an FK-continuity point. We say that $(X,T)$ is \textbf{almost FK-continuous}
	if the set of FK-continuity points is residual.
\end{definition}

	Using Lemma \ref{lem:fk-prop}, it is immediate that every mean equicontinuity point is an FK-continuity point; hence, every
	(almost) mean equicontinuous system is (almost) FK-continuous. We will see later that there exist FK-continuous topological dynamical systems that are not almost mean equicontinuous (Corollary \ref{cor:FK not mean}), and that FK-continuous systems always have zero topological entropy (Corollary \ref{zeroent}).
	
Any almost mean equicontinuous TDS with positive topological entropy is almost FK-continuous but not FK-continuous. 

An even weaker notion of equicontinuity was introduced in \cite{zheng2020new} in order to characterise uniquely ergodic systems.

 Now, we define 	\[
\EqFKe=\left\{  x\in X:\exists\delta>0\text{ }\forall y,z\in
B_{\delta}(x),\text{ }\rho_{FK}(y,z)\leq\varepsilon\right\}  .
\]
\begin{remark}\label{rem:EqFK}
	Note that
	\[
	\EqFK=\bigcap_{n=1}^\infty\EqFKn.
	\]
\end{remark}
The following lemma is proved analogously as for equicontinuous and mean equicontinuous TDSs \cite{akin1996transitive,weakforms}. For completeness, we provide a proof in the Appendix.
\begin{lemma}
	\label{invariant}Let $(X,T)$ be a TDS and $\varepsilon>0$. The sets 
	$\EqFK$ and $\EqFKe$	
	are inversely invariant, that is,
	$T^{-1}(\EqFK)\subseteq \EqFK$ and $T^{-1}(\EqFKe)\subseteq \EqFKe$.
	Furthermore, $\EqFKe$ is open, hence $\EqFK$ is residual.
\end{lemma}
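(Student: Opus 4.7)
The plan is to verify the three claims in order: (i) inverse invariance of $\EqFKe$ and $\EqFK$, (ii) openness of $\EqFKe$, and (iii) the residual (i.e.\ $G_\delta$) character of $\EqFK$. Only (i) requires genuine work; (ii) and (iii) are essentially unpacking of the definitions combined with Remark~\ref{rem:EqFK}.

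For (i), the crucial preliminary step is to upgrade the one-sided orbit invariance furnished by Lemma~\ref{lem:fk-prop}(\ref{fk-prop-iii}) to the two-sided identity $\rho_{FK}(Tx,Ty)=\rho_{FK}(x,y)$ for all $x,y\in X$; this follows at once from symmetry via $\rho_{FK}(Tx,Ty)=\rho_{FK}(x,Ty)=\rho_{FK}(Ty,x)=\rho_{FK}(y,x)=\rho_{FK}(x,y)$. Armed with this, I would take $x\in T^{-1}(\EqFKe)$, pick $\delta'>0$ witnessing $Tx\in\EqFKe$, use continuity of $T$ at $x$ to select $\delta>0$ with $T(B_\delta(x))\subseteq B_{\delta'}(Tx)$, and observe that for any $y,z\in B_\delta(x)$ the two-sided invariance gives $\rho_{FK}(y,z)=\rho_{FK}(Ty,Tz)\le\varepsilon$. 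Hence $x\in\EqFKe$. Intersecting this over $\varepsilon=1/n$ and invoking Remark~\ref{rem:EqFK} yields $T^{-1}(\EqFK)\subseteq\EqFK$.

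For (ii), if $x\in\EqFKe$ is witnessed by radius $\delta>0$, then any $x'\in B_{\delta/2}(x)$ satisfies $B_{\delta/2}(x')\subseteq B_\delta(x)$, so $x'\in\EqFKe$ (with witness $\delta/2$); thus $B_{\delta/2}(x)\subseteq\EqFKe$ and $\EqFKe$ is open. For (iii), Remark~\ref{rem:EqFK} expresses $\EqFK=\bigcap_{n\ge 1}\EqFKn$, a countable intersection of open sets, hence a $G_\delta$ in the compact metric (Baire) space $X$, which is the sense of ``residual'' used here.

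The only genuine obstacle is the promotion from one-sided to two-sided orbit invariance of $\rho_{FK}$, but this is a two-line consequence of Lemma~\ref{lem:fk-prop}(\ref{fk-prop-iii}) and symmetry. Otherwise the proof is a direct parallel of the classical arguments for equicontinuity and mean equicontinuity points \cite{akin1996transitive,weakforms}, and requires no deeper input.
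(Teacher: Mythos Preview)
Your proof is correct and follows essentially the same route as the paper's: continuity of $T$ at $x$ to pull back the witnessing ball for $Tx\in\EqFKe$, then the identity $\rho_{FK}(y,z)=\rho_{FK}(Ty,Tz)$ to transfer the estimate; the openness and $G_\delta$ arguments are identical. The only (welcome) addition is that you explicitly derive the two-sided invariance $\rho_{FK}(Tx,Ty)=\rho_{FK}(x,y)$ from the one-sided statement in Lemma~\ref{lem:fk-prop}(\ref{fk-prop-iii}) via symmetry, whereas the paper uses this identity without comment.
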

The next two results use invariance of $\rho_{FK}$ along the orbits stated in Lemma \ref{lem:fk-prop} and they are false if we replace $\rho_{FK}$ by the Besicovitch pseudometric.
\begin{lemma}
	\label{invariant2}Let $(X,T)$ be a TDS and $x\in X$.
	If $y\in \EqFK\cap \overline{\orb(x)}$ then $x\in \EqFK$ and $\rho_{FK}(x,y)=0$. 
\end{lemma}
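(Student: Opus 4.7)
The plan is to exploit two facts in tandem: first, that $\rho_{FK}$ is invariant along orbits in either argument (by Lemma \ref{lem:fk-prop}(\ref{fk-prop-iii}) combined with symmetry of the pseudometric), and second, that each iterate $T^{n}$ is individually continuous. FK-continuity of $T$ as a map is not available (and is in fact what we are trying to establish at $x$), but continuity of each single iterate is enough.

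First I would prove $\rho_{FK}(x,y)=0$. Since $y\in\overline{\orb(x)}$, pick a sequence of integers $n_k\ge 0$ with $T^{n_k}x\to y$ in $d$. Fix $\varepsilon>0$ and use FK-continuity at $y$ to obtain $\delta>0$ such that $d(y,w)\le\delta$ implies $\rho_{FK}(y,w)\le\varepsilon$. For all sufficiently large $k$ we have $d(y,T^{n_k}x)\le\delta$, hence $\rho_{FK}(y,T^{n_k}x)\le\varepsilon$. But by orbit-invariance, $\rho_{FK}(T^{n_k}x,y)=\rho_{FK}(x,y)$, so $\rho_{FK}(x,y)\le\varepsilon$. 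Letting $\varepsilon\to 0$ gives $\rho_{FK}(x,y)=0$.

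Next I would establish $x\in\EqFK$ using essentially the same bookkeeping but with an extra step. Fix $\varepsilon>0$ and choose $\delta>0$ from FK-continuity at $y$ so that $d(y,w)\le\delta$ implies $\rho_{FK}(y,w)\le\varepsilon$. Pick one $n_k$ large enough that $d(y,T^{n_k}x)\le\delta/2$. Since $T^{n_k}$ is continuous, there exists $\delta'>0$ such that $d(x,z)\le\delta'$ implies $d(T^{n_k}x,T^{n_k}z)\le\delta/2$. For any such $z$, the triangle inequality in $d$ yields $d(y,T^{n_k}z)\le\delta$, hence $\rho_{FK}(y,T^{n_k}z)\le\varepsilon$. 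By symmetry of $\rho_{FK}$ and orbit-invariance applied in the second argument, $\rho_{FK}(y,T^{n_k}z)=\rho_{FK}(T^{n_k}z,y)=\rho_{FK}(z,y)$, so $\rho_{FK}(z,y)\le\varepsilon$. Combined with $\rho_{FK}(x,y)=0$ and the triangle inequality for $\rho_{FK}$, this gives $\rho_{FK}(x,z)\le\varepsilon$, proving $x\in\EqFK$.

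The only conceptually delicate point is the interplay between $d$-closeness and $\rho_{FK}$-closeness: one cannot upgrade $d(x,z)\le\delta'$ directly to $d(y,z)\le\delta$, because $y$ need not be near $x$ in $d$. The remedy is to transport the comparison along the orbit by a single, fixed iterate $T^{n_k}$ and then cash this in against $\rho_{FK}$ via the orbit-invariance. This step is the whole content of the lemma and is where the remark after Lemma \ref{lem:fk-prop} comes into play, since the analogous statement fails for $\rho_B$.
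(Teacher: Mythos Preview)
Your proof is correct and follows essentially the same underlying mechanism as the paper's, but the packaging differs. The paper handles both conclusions in one sweep by invoking Lemma~\ref{invariant}: since $\EqFKe$ is open and $y\in\EqFKe$, a whole ball $B_\delta(y)$ lies inside $\EqFKe$; once some iterate $T^n x$ lands in that ball, backward invariance of $\EqFKe$ gives $x\in\EqFKe$ immediately, and orbit invariance of $\rho_{FK}$ gives $\rho_{FK}(x,y)\le\eps$. Letting $\eps\to 0$ finishes both claims at once.

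You instead bypass $\EqFKe$ entirely and unpack the backward-invariance step by hand, using continuity of a single fixed iterate $T^{n_k}$ to pull a $d$-neighbourhood of $x$ into a $d$-neighbourhood of $y$. This is exactly what the proof of Lemma~\ref{invariant} does under the hood, so your argument is a direct, self-contained version of the paper's modular one. The paper's route is shorter once Lemma~\ref{invariant} is in hand; yours has the virtue of making explicit that only continuity of a fixed iterate (not any FK-continuity of $T$) is being used, which is the point you emphasise in your final paragraph.
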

\begin{proof}
	Let $\eps>0$ and $y\in \EqFK\subset \EqFKe$. There is $\delta>0$ such that $B_\delta(y)\subset\EqFKe$ and $\rho_{FK}(y,z)\le \eps$ for every $z\in B_\delta(y)$. Since $y$ is in the closure of the orbit of $x$, there is $n\ge 0$ such that
	$T^nx\in B_\delta(y)$. This implies that $x\in\EqFKe$ by the backward invariance of $\EqFKe$ (Lemma \ref{invariant}) and $\rho_{FK}(x,y)\le \eps$. Since $\eps>0$ was arbitrary,  we conclude that $x\in\EqFK$ and $\rho_{FK}(x,y)=0$.
\end{proof}

\begin{proposition}
	\label{prop1}
	A transitive TDS $(X,T)$ is FK-continuous if and only if for every $x,y\in X$ we have $\rho_{FK}(x,y)=0$.
\end{proposition}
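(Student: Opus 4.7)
The plan is a straightforward equivalence: the nontrivial direction uses transitivity together with Lemma \ref{invariant2}.

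For the easy direction ($\Leftarrow$), I would simply observe that if $\rho_{FK}(x,y)=0$ for all $x,y\in X$, then for any $x\in X$ and any $\varepsilon>0$ the FK-continuity condition is vacuously satisfied with any $\delta>0$, so every point lies in $\EqFK$.

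For the forward direction ($\Rightarrow$), suppose $(X,T)$ is FK-continuous, i.e.\ $\EqFK=X$. By transitivity, there exists a point $x_0\in X$ whose forward orbit is dense, so $X=\overline{\orb(x_0)}$. Fix any $y\in X$. Then $y\in \EqFK\cap \overline{\orb(x_0)}$, so Lemma \ref{invariant2} (applied with the roles $x:=x_0$, $y:=y$) yields $\rho_{FK}(x_0,y)=0$. Since $y$ was arbitrary, $\rho_{FK}(x_0,y)=0$ for every $y\in X$. The triangle inequality for the pseudometric $\rho_{FK}$ (Lemma \ref{lem:fk-prop}(1)) then gives, for any $y,z\in X$,
\[
\rho_{FK}(y,z)\le \rho_{FK}(y,x_0)+\rho_{FK}(x_0,z)=0.
\]

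There is no real obstacle here: the work was done in Lemma \ref{invariant2}, which crucially exploits the orbit-invariance of $\rho_{FK}$ (Lemma \ref{lem:fk-prop}(\ref{fk-prop-iii})). The proposition is essentially a clean packaging of that lemma together with transitivity, and it highlights the contrast with the Besicovitch pseudometric $\rho_B$, for which the analogous statement would fail since $\rho_B$ is not orbit-invariant.
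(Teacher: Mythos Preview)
Your proof is correct and matches the paper's argument essentially line for line: the trivial direction, the choice of a transitive point $x_0$, the application of Lemma \ref{invariant2} to obtain $\rho_{FK}(x_0,y)=0$ for every $y$, and the triangle inequality to conclude. Nothing needs to change.
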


\begin{proof} If $\rho_{FK}(x,y)=0$ for every $x,y\in X$, then trivially $(X,T)$ is FK-continuous. Assume that $(X,T)$ is an FK-continuous transitive TDS (so $\EqFK=X$). Let $x_0\in X$ be a point with a dense orbit and take $y\in X=\overline{\orb(x_0)}$. By Lemma \ref{invariant2}, we obtain that $\rho_{FK}(x_0,y)=0$. Using the triangle inequality, we see that $\rho_{FK}(x,y)=0$ for every $x,y\in X$.
\end{proof}

\begin{remark}\label{exmp:two-point} For the previous proposition, we cannot omit the transitivity assumption, nor replace $\rho_{FK}$ by the Besicovitch pseudometric:
	The TDS $(X,T)$ where $X=\{x_1,x_2\}$ is a discrete metric space and $T=\textrm{Id}_X$ is the identity map, is FK-continuous but $\rho_{FK}(x_1,x_2)>0$.
	Any Sturmian subshift is an example of a transitive and mean equicontinuous TDS  for which $\rho_{B}(x,y)>0$ for some $x,y\in X$.
\end{remark}

\subsection{Sensitivity and Auslander-Yorke dichotomy}\label{sec:AY}
In classical topological dynamics theory, a notion opposite (but not complementary in general) to equicontinuity is sensitivity.
We say that a TDS $(X,T)$ is \textbf{sensitive} if there exists $\eps>0$ such that for every non-empty open set $U$ there are
$x,y\in U$ satisfying $\rho_{B}(x,y)>\varepsilon$. There is also a Besicovitch version of sensitivity; we say that a TDS $(X,T)$ is \textbf{mean sensitive} if there
exists $\varepsilon>0$ such that for every non-empty open set $U$ there exists
$x,y\in U$ satisfying $\rho_{B}(x,y)>\varepsilon$. In the same spirit we introduce sensitivity with respect to the Feldman-Katok pseudometric abbreviated as ``FK-sensitivity''.
\begin{definition}
We say that a TDS $(X,T)$ is \textbf{FK-sensitive} if there is $\varepsilon>0$ such that for every non-empty open set $U$, there exists
	$x,y\in U$ satisfying $\rho_{FK}(x,y)>\varepsilon$.
\end{definition}
\begin{remark}
Note that every FK-sensitive TDS is mean sensitive. Furthermore, a TDS is FK-sensitive if and only if there exists $\eps>0$ such that $\EqFKe$ is empty.
\end{remark}
The well-known Auslander-Yorke dichotomy \cite{auslander1980} states that for minimal TDSs sensitivity complements sensitivity. For transitive systems we have a weaker dichotomy: a transitive TDS is either sensitive or almost equicontinuous \cite{akin1996transitive}. Similar dichotomies hold for mean sensitivity and (almost) mean equicontinuity.
Therefore, it comes as no surprise that an analogous result is true for FK-sensitivity and (almost) FK-equicontinuity. The proof is similar to the Besicovitch pseudometric case and therefore we relegate it to the appendix. For other Auslander-Yorke type dichotomies see \cite{ye2018sensitivity,huang2018analogues,garciajagerye,yurelativization}.

\begin{theorem}[Auslander-Yorke dichotomy for $\rho_{FK}$]
	\label{thm:duality} A transitive TDS is either almost FK-continuous or FK sensitive. A minimal system is either FK-continuous or FK-sensitive.
\end{theorem}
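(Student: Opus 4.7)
The plan is to follow the standard Auslander--Yorke scheme, using the sets $\EqFKe$ as the workhorse. The observation driving everything is that by Lemma \ref{invariant}, each $\EqFKe$ is open and inversely invariant ($T^{-1}(\EqFKe)\subseteq \EqFKe$), and by Remark \ref{rem:EqFK}, $\EqFK=\bigcap_{n=1}^\infty \EqFKn$. Thus the dichotomy reduces to showing that, under a transitivity or minimality hypothesis, failure of FK-sensitivity forces each $\EqFKe$ to be dense (hence $\EqFK$ residual) and, in the minimal case, actually equal to $X$.

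First I would handle the transitive case. Suppose $(X,T)$ is transitive and not FK-sensitive. By the definition of FK-sensitivity, for every $\eps>0$ the set $\EqFKe$ is nonempty; by Lemma \ref{invariant} it is also open. To show it is dense, take any nonempty open $U\subset X$. By transitivity there is $n\ge 0$ with $T^{-n}(\EqFKe)\cap U\neq\emptyset$, and by inverse invariance $T^{-n}(\EqFKe)\subseteq \EqFKe$, so $\EqFKe\cap U\neq\emptyset$. Thus every $\EqFKe$ is open and dense, and hence
\[
\EqFK=\bigcap_{n=1}^\infty \EqFKn
\]
is a dense $G_\delta$, i.e. residual. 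This proves that $(X,T)$ is almost FK-continuous, so a transitive system is either almost FK-continuous or FK-sensitive (these two options cannot both hold, since almost FK-continuity provides a dense set of FK-continuity points $x$ for which no small $\eps$ witnesses sensitivity at $x$).

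For the minimal case, assume $(X,T)$ is minimal and not FK-sensitive. The transitive argument already gives that $\EqFK$ is residual, so in particular nonempty; pick $x_0\in \EqFK$. For an arbitrary $y\in X$, minimality implies $\overline{\orb(y)}=X$, so $x_0\in \EqFK\cap \overline{\orb(y)}$. Applying Lemma \ref{invariant2} (with the roles of $x$ and $y$ interchanged: the point of $\EqFK$ is $x_0$, sitting in the orbit closure of $y$), we conclude $y\in \EqFK$. Since $y$ was arbitrary, $\EqFK=X$, i.e. $(X,T)$ is FK-continuous.

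There is no real obstacle here: the content is concentrated in Lemmas \ref{invariant} and \ref{invariant2}, both of which are already available. The only mildly nonstandard point, compared to the classical equicontinuity/sensitivity dichotomy, is the use of Lemma \ref{invariant2}, which in turn relies on the orbit-invariance of $\rho_{FK}$ (Lemma \ref{lem:fk-prop}(\ref{fk-prop-iii})); this is exactly the property that the remark after Lemma \ref{lem:fk-prop} flags as the key structural difference from the Besicovitch case, and it is what lets us upgrade ``residual'' to ``all of $X$'' under minimality.
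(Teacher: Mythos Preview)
Your proof is correct and follows essentially the same route as the paper: show that non-FK-sensitivity makes each $\EqFKe$ nonempty, use openness and inverse invariance (Lemma \ref{invariant}) together with transitivity to get density, and then apply Baire. The only cosmetic difference is in the minimal case, where the paper argues directly from Lemma \ref{invariant} (any point with dense orbit hits each $\EqFKe$, hence lies in it by inverse invariance), whereas you pass through Lemma \ref{invariant2}; since Lemma \ref{invariant2} is itself proved via Lemma \ref{invariant}, the two arguments are equivalent.
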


The next result extends \cite[Theorem 8]{auslander1959}, which in turn follows from the results of \cite{oxtoby}. In effect we get a new proof of the analogous statement for $\rho_B$.



\begin{proposition}
	Let $(X,T)$ be a TDS. If there exist at least two $T$-invariant ergodic Borel probability measures with full support then $(X,T)$ is FK-sensitive.
\end{proposition}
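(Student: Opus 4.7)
The plan is to combine the Birkhoff ergodic theorem with the key observation that an $(n,\delta)$-match of high fit forces the empirical integrals of any continuous function to be close. Let $\mu_1\neq\mu_2$ be two ergodic $T$-invariant measures of full support, and pick $f\in C(X)$ with $c_1:=\int f\,d\mu_1\neq c_2:=\int f\,d\mu_2$; set $D:=|c_1-c_2|>0$. By Birkhoff's ergodic theorem there exist Borel sets $G_1,G_2\subset X$ with $\mu_i(G_i)=1$ on which the Birkhoff averages of $f$ converge to $c_i$. Because each $\mu_i$ has full support, every non-empty open set has positive $\mu_i$-measure, so $G_i$ is dense in $X$.

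The heart of the proof is the claim that there is $\eps>0$, depending only on $f$ and $D$, such that $\rho_{FK}(x,y)\ge\eps$ whenever $x\in G_1$ and $y\in G_2$. By uniform continuity of $f$ choose $\eps>0$ so small that $d(a,b)<\eps$ implies $|f(a)-f(b)|<D/6$, and additionally $2\eps\|f\|_\infty<D/6$. If $\rho_{FK}(x,y)<\eps$, then there is $\delta<\eps$ with $\fbar_\delta(x,y)<\delta$, so for all sufficiently large $n$ there is an $(n,\delta)$-match $\pi$ of $x$ and $y$ with $|\pi|\ge(1-\delta)n\ge(1-\eps)n$. For $i\in\Dom(\pi)$, uniform continuity gives $|f(T^ix)-f(T^{\pi(i)}y)|<D/6$, hence
$$\left|\sum_{i\in\Dom(\pi)}f(T^ix)-\sum_{i\in\Dom(\pi)}f(T^{\pi(i)}y)\right|\le nD/6.$$
At most $\eps n$ indices of $\{0,\ldots,n-1\}$ lie outside $\Dom(\pi)$, and at most $\eps n$ lie outside $\Ran(\pi)$, so the unmatched terms add at most $2\eps n\|f\|_\infty\le nD/3$ to the difference of the full Birkhoff sums of $f$ along $x$ and along $y$. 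Dividing by $n$ and letting $n\to\infty$ along $x\in G_1$ and $y\in G_2$ yields $D=|c_1-c_2|\le D/6+D/3<D$, a contradiction.

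Once the claim is proved, FK-sensitivity with constant $\eps/2$ is immediate: every non-empty open set $U$ meets both $G_1$ and $G_2$ by density, producing $x,y\in U$ with $\rho_{FK}(x,y)\ge\eps>\eps/2$. I expect the main subtlety to be the careful parameter bookkeeping in the Birkhoff-sum comparison, separating the matched contribution (controlled by uniform continuity through the fact that $\delta<\eps$) from the unmatched residue (of cardinality at most $\eps n$ on either side), and choosing $\eps$ small enough so that both error terms together stay strictly below $D$. The order-preserving bijection aspect of $\pi$ is not used in any essential way here; only that $\pi$ maps a set of size at least $(1-\eps)n$ injectively into $\{0,\ldots,n-1\}$.
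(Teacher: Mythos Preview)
Your argument is correct. The parameter bookkeeping is slightly loose (with $2\eps\|f\|_\infty<D/6$ the unmatched contribution is actually bounded by $D/6$, not $D/3$), but either bound yields the contradiction $D\le D/2$, so nothing is lost.

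Your route, however, differs from the paper's. The paper argues by contraposition through the structural results developed earlier: if $(X,T)$ were not FK-sensitive, transitivity (forced by the existence of a fully supported ergodic measure) together with the Auslander--Yorke dichotomy for $\rho_{FK}$ (Theorem~\ref{thm:duality}) makes $(X,T)$ almost FK-continuous; Lemma~\ref{invariant2} then shows that any two points with dense orbits---in particular generic points $x_0,y_0$ for $\mu,\nu$---satisfy $\rho_{FK}(x_0,y_0)=0$; finally \cite[Fact~32]{kwietniaklacka17} forces $\mu=\nu$. Your proof bypasses both the dichotomy and Lemma~\ref{invariant2} and instead gives a direct, quantitative version of that last cited fact: an $(n,\delta)$-match of large fit forces Birkhoff averages of any $f\in C(X)$ to be close, so generic points for distinct measures must be uniformly $\rho_{FK}$-separated. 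The paper's argument is shorter once the earlier machinery is in place and makes the logical dependence on the dichotomy explicit; yours is more elementary and self-contained, and exposes the mechanism---$\rho_{FK}$ controls empirical distributions---without appeal to external facts.
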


\begin{proof}
Assume that $(X,T)$ is not FK sensitive. Let $\mu$ and
	$\nu$ be two ergodic Borel probability measures with full support. Assume that $x_0 \in X$ is a generic point for $\mu$, and $y_0$ is a generic point for $\nu$. Since $x_0$ and $y_0$ are generic points of fully supported measures, their orbits must be dense in $X$. In particular, $(X,T)$ is transitive, so by Theorem \ref{thm:duality} $(X,T)$ it must also be almost FK-continuous. 
By Lemma \ref{invariant2} we get that  $x_0$ and $y_0$ are both FK-continuity points and $\rho_{FK}(x_0,y_0)=0$. Using \cite[Fact 32]{kwietniaklacka17} we conclude that $\mu=\nu$.
\end{proof}

\begin{corollary}
	Let $(X,T)$ be a TDS. If there exist at least two $T$-invariant ergodic Borel probability measures with full support then $(X,T)$ is
	mean sensitive.
\end{corollary}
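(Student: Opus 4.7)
The plan is to reduce this corollary directly to the preceding proposition together with the observation (recorded in the remark following the definition of FK-sensitivity) that every FK-sensitive TDS is mean sensitive. Under the hypothesis that $(X,T)$ carries at least two ergodic $T$-invariant Borel probability measures of full support, the preceding proposition immediately yields that $(X,T)$ is FK-sensitive, so it suffices to argue the implication ``FK-sensitive $\Rightarrow$ mean sensitive''.

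The implication is not entirely tautological, because mean sensitivity is phrased with $\rho_B$ while the bound $\rho_{FK}\leq\rho_B'$ from Lemma \ref{lem:fk-prop}(2) involves the variant $\rho_B'$. So first I would spell out the inequality chain: by Lemma \ref{lem:fk-prop}(2), $\rho_{FK}(x,y)\leq\rho_B'(x,y)$ for all $x,y\in X$, and $\rho_B'$ is uniformly equivalent to $\rho_B$ (this is recorded in the subsection on dynamical pseudometrics). Hence, given the FK-sensitivity constant $\varepsilon>0$, there exists $\varepsilon'>0$ such that whenever $\rho_{FK}(x,y)>\varepsilon$ one has $\rho_B(x,y)>\varepsilon'$.

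With this in hand the proof writes itself: choose $\varepsilon>0$ witnessing FK-sensitivity, let $\varepsilon'>0$ be the corresponding mean-sensitivity constant produced by uniform equivalence of $\rho_B$ and $\rho_B'$, and observe that for any nonempty open $U\subseteq X$, the pair $x,y\in U$ satisfying $\rho_{FK}(x,y)>\varepsilon$ (which exists by FK-sensitivity) also satisfies $\rho_B(x,y)>\varepsilon'$. Thus $(X,T)$ is mean sensitive.

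There is no real obstacle here; the only subtlety worth flagging is the transition between $\rho_B'$ (the upper-density formulation used to compare with $\rho_{FK}$) and $\rho_B$ (the Cesàro-average formulation used in the definition of mean sensitivity). Once one notes that uniform equivalence of these two pseudometrics transports any uniform positive lower bound between them (up to a change of constant), the corollary follows immediately from the previous proposition.
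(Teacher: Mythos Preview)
Your proposal is correct and follows exactly the route the paper intends: the corollary is stated without proof in the paper, being an immediate consequence of the preceding proposition together with the remark that every FK-sensitive TDS is mean sensitive. Your extra care in tracing the implication through the inequality $\rho_{FK}\le\rho_B'$ and the uniform equivalence of $\rho_B'$ with $\rho_B$ simply fills in the one-line remark the paper left implicit; this is not a different approach but a welcome clarification.
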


\subsection{Measure theoretic FK-continuity}
Measure theoretic forms of equicontinuity and sensitivity were introduced in \cite{gilman1987classes} and \cite{huang2011measure} (actually under reasonable conditions these two definitions are equivalent \cite{garcia2017characterization}). The mean/Besicovitch forms of measure theoretic equicontinuity and sensitivity were first studied in \cite{weakforms}. In this section, given a TDS $(X,T)$ and a $T$-invariant Borel probability measure $\mu$, we define  $\mu$-FK-continuity and $\mu$-FK-sensitivity, and we prove their basic properties. Similar to the topological case, one can use Lemma \ref{lem:fk-prop} to see that $\mu$-mean equicontinuity implies $\mu$-FK-continuity, and $\mu$-FK-sensitivity implies $\mu$-mean sensitivity.

\begin{definition}
	\label{mumeanequi} Let $(X,T)$ be a TDS and $\mu$ be a Borel probability measure. We say that $(X,T)$ is \textbf{$\mu$-FK-continuous} if
	for every $\tau>0$ there exists a compact set $M_\tau\subset X$, with $\mu
	(M_\tau)\geq1-\tau$, such that for every $\varepsilon>0$, there exists $\delta>0$
	so that for every $x,y\in M_\tau$, if $d(x,y)\leq\delta$ then
	\[
	\rho_{FK}(x,y)\leq\varepsilon.
	\]
\end{definition}
Clearly, if $(X,T)$ is FK-continuous then it must be $\mu$-FK-continuous for every Borel probability measure.

Given a TDS, we denote the set of $T$-invariant ergodic Borel probability measures by $M^e_1(X,T)$.

\begin{proposition}
	\label{prop2}
	Let $(X,T)$ be a TDS and $\mu\in M^e_1(X,T)$. We have that $(X,T)$ is $\mu$-FK-continuous if and only if there exists a Borel set $H$ with full measure such that $\rho_{FK}(x,y)=0$ for every $x,y\in H$.
\end{proposition}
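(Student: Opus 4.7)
The reverse implication is immediate: given such an $H$, inner regularity of $\mu$ produces, for every $\tau>0$, a compact set $M_\tau\subset H$ with $\mu(M_\tau)\ge 1-\tau$, and since $\rho_{FK}$ vanishes on $M_\tau\times M_\tau$ the condition of Definition~\ref{mumeanequi} holds trivially. For the forward direction, the plan is to show that $g(x,y):=\rho_{FK}(x,y)$ is $(\mu\times\mu)$-almost everywhere equal to a constant $c$, then to show $c=0$, and finally to extract $H$ via Fubini.

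For the constancy step, first note that $g$ is a bounded Borel function on $X\times X$: each $\bar f_{n,\delta}$ takes only finitely many values with open super-level sets (``admitting an $(n,\delta)$-match of fit at least $k$'' is a finite union of open conditions of the form $d(T^ix,T^jy)<\delta$), and $\rho_{FK}$ is recovered by countable operations over rational $\delta$. By the orbit-invariance of $\rho_{FK}$ (Lemma~\ref{lem:fk-prop}), $g(Tx,y)=g(x,y)=g(x,Ty)$ for all $x,y$, so for each fixed $y$ ergodicity forces $g(\cdot,y)$ to be $\mu$-a.e.\ equal to $h(y):=\int g(x,y)\,d\mu(x)$. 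Fubini then gives $g(x,y)=h(y)$ on a set of full $\mu\times\mu$ measure, and the symmetric argument in the second coordinate yields $g(x,y)=\tilde h(x)$ on another such set. Combining on the intersection forces $h$ to be $\mu$-a.e.\ constant, so $g\equiv c$ on a set of full $\mu\times\mu$ measure. This ergodicity-plus-Fubini step is the main conceptual obstacle, since one cannot invoke ergodicity of $T\times T$ on $\mu\times\mu$ directly (that would require weak mixing) and must instead exploit the separate $T$-invariance of $g$ in each coordinate.

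To show $c=0$, fix $\tau=1/2$, let $M=M_\tau$, and pick any $\varepsilon>0$. By hypothesis there is $\delta>0$ such that $\rho_{FK}(x,y)\le\varepsilon$ whenever $x,y\in M$ and $d(x,y)\le\delta$. Cover the compact set $M$ by finitely many sets of diameter at most $\delta$; pigeonhole produces one meeting $M$ in a Borel set $N'$ of positive $\mu$-measure, on which $\rho_{FK}\le\varepsilon$. Then $N'\times N'$ has positive $\mu\times\mu$ measure and therefore meets $\{g=c\}$, forcing $c\le\varepsilon$; letting $\varepsilon\to 0$ gives $c=0$. A last application of Fubini yields an $x_0\in X$ for which the Borel set $H:=\{y\in X:\rho_{FK}(x_0,y)=0\}$ has $\mu(H)=1$; the triangle inequality then gives $\rho_{FK}(y,z)\le\rho_{FK}(y,x_0)+\rho_{FK}(x_0,z)=0$ for all $y,z\in H$.
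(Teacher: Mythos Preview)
Your argument is correct, but it follows a genuinely different route from the paper's. The paper's proof of the forward direction is more direct: it fixes any $\tau<1$, picks a point $y_0\in M_\tau$ that is in the $\mu$-support of $M_\tau$, observes that $\mu(B^{FK}_\eps(y_0))>0$ for every $\eps>0$, and then sets
\[
H=\bigcap_{i=1}^{\infty}\bigcup_{n=0}^\infty T^{-n}\bigl(B^{FK}_{1/i}(y_0)\bigr).
\]
Ergodicity gives $\mu(H)=1$, and orbit-invariance of $\rho_{FK}$ (Lemma~\ref{lem:fk-prop}\eqref{fk-prop-iii}) immediately yields $\rho_{FK}(x,y_0)=0$ for every $x\in H$. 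Your approach instead establishes that $\rho_{FK}$ is $(\mu\times\mu)$-a.e.\ constant via the separate-coordinate invariance plus Fubini trick (this is essentially the content of Lemma~\ref{ecu} in the appendix, applied at the level of the function $\rho_{FK}$ rather than the measure of its sublevel sets), identifies that constant as $0$ by a pigeonhole on a single $M_\tau$, and only then extracts the anchor point $x_0$ via Fubini. The paper's argument is shorter and gives an explicit $H$ directly from a single $y_0$; your argument has the virtue of making transparent exactly why the non-ergodicity of $T\times T$ under $\mu\times\mu$ is no obstacle, and it isolates a reusable ``$\rho_{FK}$ is a.e.\ constant on $X\times X$'' lemma.
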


\begin{proof}Assume that $(X,T)$ is $\mu$-FK-continuous. Fix $0<\tau<1$.
	There exists a Borel set $M_{\tau}\subset X$ (with $\mu(M_{\tau})\geq1-\tau$) and
	$\delta(\eps)>0$ such that if $x,y\in M_{\tau}$ satisfy $d(x,y)\leq\delta(\eps)$, then
	$\rho_{FK}(x,y)\leq\varepsilon$. Let $y_0\in M_\tau$ be such that for every $\delta>0$, we have $\mu(M_\tau\cap B_\delta(y_0))>0$. Since
\[
B_{\delta(\eps)}(y_0)\cap M_\tau \subset B^{FK}_\eps(y_0),
\]
it follows that for every $\eps>0$ we have $\mu(B^{FK}_\eps(y_0))>0$. Set
\[
H=\bigcap_{i=1}^{\infty}\bigcup_{n=0}^\infty T^{-n}(B^{FK}_{1/i}(y_0)).
\]
Since $\mu$ is ergodic, we have $\mu(H)=1$. Reasoning as in the proof of Lemma \ref{invariant2}, it is easy to see that $\rho_{FK}(x,y_0)=0$ for every $x\in H$. By the triangle inequality, we get that $\rho_{FK}(x,y)=0$ for every $x,y\in H$.
The other direction is trivial.
\end{proof}

\begin{remark}
If we do not assume the measure is ergodic then the statement of Proposition \ref{prop2} does not hold. Let $X=\{x,y\}$, $T$ be the identity map, and $\mu=1/2(\delta_x+\delta_y)$, where $\delta_z$ stands for a Dirac measure concentrated on $z$. We have that $(X,T)$ is $\mu$-FK-continuous but $\rho_{FK}(x,y)>0$.
\end{remark}

\begin{definition}
Let $(X,T)$ be a TDS and $\mu$ a Borel probability measure. We say $(X,T)$ is
\begin{itemize}
  \item $\mu$\textbf{-FK-sensitive} if there exists
	$\varepsilon_0>0$ such that for every $A\in\mathcal{B}_{X}^{+}$ there exist
	$x,y\in A$ satisfying $\rho_{FK}(x,y)>\varepsilon_0$,
  \item $\mu$\textbf{-FK-expansive} if there exists
	$\varepsilon'_0>0$ such that
\[\mu\times\mu\left(\left\{  (x,y)\in X\times X:\rho_{FK}(x,y)>\varepsilon'_0\right\}\right)  =1.\]
\end{itemize}
We call $\eps_0$ (respectively, $\eps'_0$) a $\mu$\textbf{-FK-sensitivity constant} (respectively a $\mu$\textbf{-FK-expansivity constant}) of $(X,T)$.
\end{definition}

It is easy to see that the function $\rho_{FK}(\cdot,\cdot)$ is a Borel function on $X\times X$. It follows that
for every $\eps>0$, the set $B_{\varepsilon}^{FK}(x):=\left\{  y\in X:\rho_{FK}(x,y)\leq\varepsilon\right\}$ is also a Borel set.

The proof of the following theorem follows the same lines as the proof of an analogous result presented in \cite[Theorem 26]{weakforms} for the Besicovitch case, which is also similar to a result in \cite{huang2011measure}. For reader's convenience, we present the proof in the appendix.
\begin{theorem}
	\label{strongsensitive}Let $(X,T)$ be a TDS and $\mu\in M^e_1(X,T)$. The following conditions
	are equivalent:
	\begin{enumerate}
	  \item\label{ss:i} $(X,T)$ is $\mu$-FK-sensitive,
	  \item\label{ss:ii} $(X,T)$ is $\mu$-FK-expansive,
	  \item\label{ss:iii} there exists $\varepsilon>0$ such that
	$\mu(B_{\varepsilon}^{FK}(x))=0$ for $\mu$-a.e. $x\in X$, and
	  \item\label{ss:iv} $(X,T)$ is not $\mu$-FK-continuous.
	\end{enumerate}
	\end{theorem}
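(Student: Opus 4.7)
The plan is to prove the implications via the cycle (iii) $\Rightarrow$ (ii) $\Rightarrow$ (i) $\Rightarrow$ (iv) $\Rightarrow$ (iii). Three of the four steps are short; the real work lives in (iv) $\Rightarrow$ (iii), and the engine driving it will be the orbit invariance of $\rho_{FK}$ stated in Lemma \ref{lem:fk-prop}(iii).

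For the short implications: (iii) $\Rightarrow$ (ii) follows from Fubini applied to the Borel set $\{(x,y): \rho_{FK}(x,y)\le \varepsilon\}$, whose $\mu\times\mu$-measure equals $\int \mu(B^{FK}_\varepsilon(x))\,d\mu(x)=0$; so the complement has full $\mu\times\mu$-measure, giving $\mu$-FK-expansivity with $\varepsilon'_0=\varepsilon$. For (ii) $\Rightarrow$ (i), any $A \in \mathcal{B}^+_X$ satisfies $\mu\times\mu(A\times A)=\mu(A)^2>0$, so $A\times A$ meets the full-measure set $\{\rho_{FK}>\varepsilon'_0\}$, producing a witnessing pair inside $A$. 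For (i) $\Rightarrow$ (iv), I argue by contrapositive: if $(X,T)$ is $\mu$-FK-continuous then Proposition \ref{prop2} supplies a full-measure Borel set $H$ on which $\rho_{FK}\equiv 0$, which refutes $\mu$-FK-sensitivity for any candidate constant.

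The substantive step is (iv) $\Rightarrow$ (iii). The key observation is that symmetry of $\rho_{FK}$ combined with its orbit invariance yields $\rho_{FK}(x,Ty)=\rho_{FK}(Ty,x)=\rho_{FK}(y,x)=\rho_{FK}(x,y)$, so for every fixed $x\in X$ and $\varepsilon>0$ the set $B^{FK}_\varepsilon(x)$ is $T$-invariant (both forward and under $T^{-1}$). By ergodicity, $\phi_\varepsilon(x):=\mu(B^{FK}_\varepsilon(x))\in\{0,1\}$ for \emph{every} $x\in X$, not just almost every $x$. The same identity gives $B^{FK}_\varepsilon(Tx)=B^{FK}_\varepsilon(x)$, so $\phi_\varepsilon$ itself is $T$-invariant and, by ergodicity, $\mu$-a.e.\ equal to some constant $c(\varepsilon)\in\{0,1\}$. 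Since $B^{FK}_{\varepsilon_1}(x)\subset B^{FK}_{\varepsilon_2}(x)$ whenever $\varepsilon_1<\varepsilon_2$, the function $c(\cdot)$ is non-decreasing, so there is a threshold $\varepsilon^*\in[0,\infty]$ with $c(\varepsilon)=0$ for $\varepsilon<\varepsilon^*$ and $c(\varepsilon)=1$ for $\varepsilon>\varepsilon^*$. If $\varepsilon^*>0$, any $\varepsilon\in(0,\varepsilon^*)$ realises (iii). Otherwise $\varepsilon^*=0$ and $c(\varepsilon)=1$ for all $\varepsilon>0$; continuity of measure from above then forces $\mu(\{y:\rho_{FK}(x_0,y)=0\})=1$ for $\mu$-a.e.\ $x_0$, and fixing such an $x_0$ and setting $H=\{y:\rho_{FK}(x_0,y)=0\}$, the triangle inequality shows $\rho_{FK}\equiv 0$ on $H\times H$, so Proposition \ref{prop2} forces $\mu$-FK-continuity, contradicting (iv).

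The anticipated hurdle is precisely establishing that $\mu(B^{FK}_\varepsilon(x))\in\{0,1\}$ pointwise in $x$ (rather than merely $\mu$-almost everywhere), because it is this pointwise dichotomy that enables the monotone-threshold argument; this is the exact point at which the FK proof diverges from its Besicovitch analogue in \cite{weakforms}, whose pseudometric lacks orbit invariance and requires a considerably more delicate replacement.
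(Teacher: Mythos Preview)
Your proof is correct and follows essentially the same approach as the paper: both hinge on the observation that orbit invariance of $\rho_{FK}$ (Lemma \ref{lem:fk-prop}\eqref{fk-prop-iii}) makes each ball $B^{FK}_\varepsilon(x)$ a $T$-invariant set, whence ergodicity forces $\mu(B^{FK}_\varepsilon(x))\in\{0,1\}$ (the content of the paper's Lemma \ref{ecu}), after which Fubini and Proposition \ref{prop2} dispatch the remaining implications. Your cycle (iii)$\Rightarrow$(ii)$\Rightarrow$(i)$\Rightarrow$(iv)$\Rightarrow$(iii) is a harmless reordering of the paper's (i)$\Rightarrow$(iii)$\Rightarrow$(ii)$\Rightarrow$(i) together with (iii)$\Leftrightarrow$(iv); one small overstatement is your closing remark that the pointwise $\{0,1\}$ dichotomy is where the argument ``diverges'' from the Besicovitch analogue---the paper explicitly notes the proof follows the same lines as \cite[Theorem 26]{weakforms}, and in fact the a.e.\ constancy of $\phi_\varepsilon$ already suffices for your threshold argument.
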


\section{Loosely Kronecker systems}\label{sec:LK}

In this section we will characterise when a MPS arising from  a TDS $(X,T)$ and a measure $\mu\in M^e_1(X,T)$ is loosely Kronecker using $\rho_{FK}$.

Let $X$ be a compact metrisable space. We write $\bP^k(X)$ for the set of all ordered partitions of $X$ into at most $k$ Borel measurable sets called \textbf{atoms}. For $\mathcal P\in \mathbf P^k(X)$, we write $\mathcal{P}=\{P_0,\ldots,P_{k-1}\}$ regardless of the actual number of nonempty elements in $\mathcal P$; for this, we set $P_j=\emptyset$ for $|\mathcal{P}|\le j<k$.

Assume that $(X,T)$ is a TDS and $\mu$ is a $T$-invariant Borel measure. Let $(X, \mu, T)$ be a MPS and $\mathcal P\in\bP^k(X)$. We identify $\mathcal P$ with a function $\mathcal P\colon X\to \{0,\ldots, k-1\}$ defined by $\mathcal P(x)=j$ for $x\in P_j$.

We endow $\mathbf P^k(X)$  with the pseudometric $\rho_1^\mu$ given for  $\mathcal{P},\mathcal{Q}\in\bP^k(X)$ by
\begin{align*}
\rho_{1}^{\mu}(\mathcal{P},\mathcal{Q})  & =\frac{1}{2}\sum_{j=0}^{k-1}%
\mu(P_{j}\div Q_{j})\\
& =\frac{1}{2}\sum_{j=0}^{k-1}\int_{X}|\chi_{P_{j}}-\chi_{Q_{j}}|\,d\mu\\
& =\mu(\{x\in X:\mathcal{P}(x)\neq\mathcal{Q}(x)\}).
\end{align*}

Note that the definition of $\rho^\mu_1$ takes into account the order of the atoms. 
It is well known that $\rho^\mu_1$ is a complete pseudometric for $\bP^k(X)$, which becomes a metric when we identify partitions $\mathcal{P},\mathcal{Q}\in\bP^k(X)$ satisfying $d^\mu_1(\mathcal{P},\mathcal{Q})=0$.

We define the $n$-th join of $\mathcal P$ as
\begin{align*}
\mathcal P^n & :=\bigvee_{j=0}^{n-1}T^{-j}(\mathcal P)\\ & =\{P_{i_0}\cap T^{-1}(P_{i_1})\cap\ldots\cap T^{-n+1}(P_{i_{n-1}}):P_{i_j}\in\mathcal P\text{ for }0\le j<n\}.
\end{align*}

 \begin{remark}
Note that every atom in $\mathcal P^n$ can also be interpreted as a point (word) in the product space $\mathcal P\times\ldots \times \mathcal P$ ($n$ times).
We define $\mathcal P^\infty$ naturally, and we say $\mathcal P^\infty (x)\in (\mathcal P)^\infty$ is the $ \mathcal P$-itinerary of $x$.
\end{remark}

\begin{definition}
	\label{editdef}
 Let $A$ be a finite set and $u,w\in A^n$. We define the \textbf{edit distance}
\[
\fbar_n(u,w)=1-\frac{k}{n},
\]
where $k$ is the largest among those integers $\ell$ such that for some $0\le i_1<i_2<\ldots<i_\ell<n$ and $0\le j_1<j_2<\ldots<j_\ell<n$ we have
$u(i_s)=w(j_s)$ for $s=1,\ldots,\ell$.
\end{definition}



\begin{theorem}
	[Katok's Criterion \cite{katok77}]
\label{katokcrit}
 A measure-preserving dynamical system $(X,\Sigma,\mu,T)$ is loosely
	Kronecker if and only if for every finite partition $\mathcal{P}$, we have that
	for every $\varepsilon>0$, there exists $N=N(\varepsilon)$ such that for every
	$n\geq N$, there exists a word $w\in\mathcal{P}^{n}$ such that
\[\mu
	(\{w^{\prime}\in\mathcal{P}^{n}:\fbar_{n}(w,w^{\prime})<\varepsilon
	\})\geq1-\varepsilon.\]
\end{theorem}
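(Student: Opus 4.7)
Katok's criterion is a classical 1977 result; my plan would follow Katok's original approach, splitting naturally into two directions and relying at the end on either a direct cutting-and-stacking construction or the Ornstein--Rudolph--Weiss classification already cited in the introduction.

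For the forward direction (loosely Kronecker implies word-concentration), I would first verify the conclusion directly for a Kronecker MPS. If $(X,\mu,T)$ is a rotation on a compact abelian group, then $T$ is equicontinuous; given a finite partition $\mathcal{P}$ and $\varepsilon>0$, I would approximate $\mathcal{P}$ by a partition whose atoms have small diameter and thin boundary (at a measure cost of $\varepsilon$). Equicontinuity together with the mean ergodic theorem then forces, for most $x,y$ in any sufficiently small metric ball, the $\mathcal{P}$-names $\mathcal{P}^n(x)$ and $\mathcal{P}^n(y)$ to agree on at least a $(1-\varepsilon)$-fraction of positions, so picking $w:=\mathcal{P}^n(x_0)$ for a generic $x_0$ yields the required concentration (already in the Hamming metric, hence a fortiori in $\fbar_n$). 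To transfer the conclusion from the Kronecker system to an arbitrary MPS Kakutani equivalent to it, I would use a Rokhlin-tower representation of $T$ over a base $A$ with $(A,T_A,\mu_A)$ Kronecker and push the concentration through the Abramov time change, interpreting each excursion of length $r_A-1$ outside $A$ as a block of letter insertions to be absorbed by the edit metric.

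For the reverse direction I would proceed in two steps. First, verify zero entropy: each $\fbar_n$-ball of radius $\varepsilon$ in $\mathcal{P}^n$ contains at most $\binom{n}{\lceil\varepsilon n\rceil}^2|\mathcal{P}|^{\lceil\varepsilon n\rceil}$ words, which is subexponential for small $\varepsilon$, so the concentration hypothesis combined with Shannon--McMillan--Breiman forces $h_\mu(T)=0$. Second, construct a Kakutani equivalence with a compact group rotation via cutting and stacking. Applying the hypothesis to an increasing sequence of finer partitions $\mathcal{P}_k$ with $\varepsilon_k\to 0$, I would extract reference words $w_k\in\mathcal{P}_k^{n_k}$ and build Rokhlin towers of height $n_k$ with column-name $w_k$ covering a set of measure $\ge 1-\varepsilon_k$, arranging that the tower at stage $k+1$ Kakutani-refines the tower at stage $k$ up to an error controlled by $\varepsilon_k$. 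The limit structure is identified with a compact group rotation, either via Halmos--von Neumann applied to the discrete-spectrum factor produced in the limit, or, more cheaply, by invoking the Ornstein--Rudolph--Weiss theorem to conclude loosely Kronecker from loosely Bernoulli plus zero entropy.

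The main obstacle is the Kakutani transfer in the forward direction: showing that $\fbar$-word-concentration with respect to $T$ is equivalent, up to a controlled deterioration of the parameters $(N,\varepsilon)$, to the analogous property with respect to the induced map $T_A$. The combinatorics is delicate because return times have variable length and must be absorbed into edit operations while keeping the concentration rate uniform in the word length $n$; Kac's lemma, Abramov's formula, and careful truncation of tall Rokhlin towers are the essential ingredients. Once this transfer (and its counterpart for the cutting-and-stacking construction in the converse) is in place, the remaining pieces are standard and fit together cleanly.
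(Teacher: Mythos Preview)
The paper does not contain its own proof of this statement: Theorem~\ref{katokcrit} is quoted from Katok~\cite{katok77} as a black-box tool and is applied (in both directions) inside the proofs of Theorems~\ref{LKcharac} and~\ref{uniquelyerg}, but never proved. So there is nothing in the paper to compare your proposal against.

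That said, your outline follows the classical route and is broadly sound. The forward direction via ``Kronecker case $+$ Kakutani transfer'' is the standard strategy, and you correctly identify the delicate point: controlling the deterioration of the $\fbar$-concentration parameters under the variable-length time change, using Kac's lemma and truncation of return times. In the converse, the zero-entropy step via the subexponential size of $\fbar_n$-balls is right; the second step is where your sketch is thinnest. Merely producing Rokhlin towers whose column names agree with $w_k$ up to $\fbar$-error $\eps_k$ does not by itself yield a Kakutani equivalence with a rotation---you need to arrange a \emph{coherent} nested system of towers (the refinement at stage $k+1$ embeds into that at stage $k$ after a controlled edit of the time axis), and then argue that the limiting time change is measurable and the resulting induced system has pure point spectrum. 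Invoking Ornstein--Rudolph--Weiss at the end, as you suggest, is the cleanest way to close this gap, but then you must verify the full ``finitely fixed'' (loosely Bernoulli) property rather than only the one-sided $\fbar$-concentration, which requires an additional argument (essentially that zero entropy plus $\fbar$-concentration for all partitions already forces the joining condition in the finitely fixed definition).
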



\begin{lemma}
\label{lemma1}
Let $M\subset X$ be a Borel set such that for every $x,y\in M$ we have $\rho_{FK}(x,y)=0$. Then
for every $\tau>0$, we can find $N>0$ and a set $M_0\subset M$ with $\mu(M\setminus M_0)<\tau$ such that for every $x,y\in M_0$ and $n\ge N$, we have
$\fbar_{n,\tau}(x,y)\le\tau$.
\end{lemma}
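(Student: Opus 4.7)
The plan is to reduce the problem to controlling distances to a single reference point. If $\mu(M) = 0$ the conclusion is trivial, so assume $\mu(M) > 0$ and fix any $y_0 \in M$. For each $x \in M$, the hypothesis $\rho_{FK}(x, y_0) = 0$, the definition of $\rho_{FK}$, and monotonicity of $\delta \mapsto \fbar_\delta$ (smaller $\delta$ allows fewer matches) jointly yield $\fbar_\delta(x, y_0) < \delta$ for some $\delta < \tau/2$, and hence $\fbar_{\tau/2}(x, y_0) < \tau/2$. Unwinding the $\limsup$, this forces $\fbar_{n, \tau/2}(x, y_0) < \tau/2$ for every sufficiently large $n$, with the threshold depending on $x$.

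To make the threshold uniform in $x$, I introduce the nested Borel sets
\[E_N := \{x \in M : \fbar_{n, \tau/2}(x, y_0) < \tau/2 \text{ for every } n \ge N\}, \qquad N \in \N.\]
Borel measurability follows because, for each order-preserving partial injection $\pi$ of $\{0, \ldots, n-1\}$, the set of $x$ for which $\pi$ is a valid $(n, \tau/2)$-match of $x$ with $y_0$ is open (continuity of $T$ and the open condition $d < \tau/2$), so $\{x : \fbar_{n, \tau/2}(x, y_0) < \tau/2\}$ is open and $E_N$ is a countable intersection of open sets with $M$. By the previous paragraph $\bigcup_N E_N = M$, so continuity of $\mu$ supplies an $N$ with $\mu(M \setminus E_N) < \tau$, and I set $M_0 := E_N$.

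Finally, given $x, y \in M_0$ and $n \ge N$, the definition of $E_N$ supplies $(n, \tau/2)$-matches $\pi_1 \colon D_1 \to R_1$ of $x$ with $y_0$ and $\pi_2 \colon D_2 \to R_2$ of $y$ with $y_0$ of fits greater than $(1 - \tau/2)n$. I compose them by setting $\pi := \pi_2^{-1} \circ \pi_1$ on $D := D_1 \cap \pi_1^{-1}(R_2)$. Order-preservation passes through the composition since $\pi_1$ and $\pi_2^{-1}|_{R_2}$ are both order-preserving, and for $i \in D$ the triangle inequality yields $d(T^i x, T^{\pi(i)} y) < \tau/2 + \tau/2 = \tau$, so $\pi$ is an $(n, \tau)$-match of $x$ with $y$. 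The inclusion-exclusion count $|D| \ge |D_1| + |R_2| - n > (1 - \tau)n$ then gives $\fbar_{n, \tau}(x, y) < \tau$, as required. The main technical subtlety lies in this composition-of-matches step, where the strict bounds $|D_1|, |R_2| > (1 - \tau/2) n$ must be carefully balanced against the $\tau$-sized distance budget; the rest---Borel measurability of $E_N$ and the Egorov-type extraction of a uniform $N$ via monotone continuity of $\mu$---is routine.
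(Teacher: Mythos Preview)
Your proof is correct and follows essentially the same approach as the paper's: fix a reference point, use an Egorov-type exhaustion (you phrase it as monotone continuity of $\mu$ on the nested sets $E_N$) to make the threshold uniform, and then compose two matches through the reference point, intersecting at the common index set. The only cosmetic differences are that the paper uses $\tau/3$ where you use the sharper $\tau/2$, and the paper orients its matches from the reference point outward (intersecting domains) while you orient them toward the reference point (intersecting ranges).
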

\begin{proof}
Let $\tau>0$. If $\mu(M)=0$, then there is nothing to prove. Hence, we assume $\mu(M)>0$ and fix $x_0\in M$. Since $\rho_{FK}(x,y)=0$ for every $x,y\in M$, we have $\fbar_{\tau/3}(x_0,y)<\tau/3$ implying that $\limsup_{n\rightarrow\infty}\fbar_{n,\tau/3}(x_0,y)<\tau/2$. Using essentially the same argument as in the proof of Egorov's Theorem (for example, see \cite[page 33]{steinbook2005}), we obtain there exist $N>0$ and a set $M_0\subset M$, with $\mu(M\setminus M_0)<\tau$, such that for every $y\in M_0$ and $n\ge N$, we have
$\fbar_{n,\tau/3}(x_0,y)<\tau/2$. In particular, for every $n\ge N$ and $y\in M_0$, there exists an $(n,\tau/3)$-match $\pi^y_n\colon \mathcal{D}_n^y\to\mathcal{R}^y_n$ between $x_0$ and $y$ such that $|\mathcal{D}_n^y|=|\mathcal{R}^y_n|> n(1-\tau/2)$.
In other words, the sets $\mathcal{D}_n^y,\mathcal{R}^y_n\subset\{0,1,\ldots,n-1\}$ are such that
\[
d(T^i(x_0),T^{\pi^y_n(i)}(y))<\tau/3\quad\text{for every }i\in \mathcal{D}_n^y.
\]
Let $y,y'\in M_0$, $n\ge N$, and $J=\mathcal{D}_n^y\cap\mathcal{D}^{y'}_n$. We have $|J|>n(1-\tau)$. Clearly $\pi_n=(\pi^{y'}_n)\circ (\pi^{y}_n)^{-1}$ is an $(n,\tau)$-match between $y$ and $y'$ with $\mathcal{D}(\pi_n)=(\pi^{y}_n)^{-1}(J)$ and $\mathcal{R}(\pi_n)=\pi^{y'}_n(J)$, so $|\pi_n|> n(1-\tau)$. This implies the desired result.
\end{proof}

\begin{theorem}
\label{LKcharac} Let $(X,T)$ a TDS and $\mu\in M^e_1(X,T)$. Then, $(X,T)$ is $\mu$-FK-continuous if and only if $(X,\mu,T)$ is loosely Kronecker.
\end{theorem}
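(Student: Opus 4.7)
The statement is an equivalence, so I will treat the two implications separately; both are bridged by Proposition \ref{prop2} (recasting $\mu$-FK-continuity as the existence of a $\mu$-full Borel set $H\subset X$ with $\rho_{FK}(x,y)=0$ for all $x,y\in H$) and by Katok's criterion (Theorem \ref{katokcrit}). The transfer between the dynamical pseudodistance $\fbar_{n,\delta}$ and the symbolic edit distance $\fbar_n$ is always controlled by the diameter of the test partition: matches of $\mathcal{P}^n$-itineraries automatically give $(n,\diam\mathcal{P})$-matches of the underlying points, and conversely a Birkhoff-type input localises dynamical matches to a region on which the partition is essentially constant.

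For the direction ``$\mu$-FK-continuous $\Rightarrow$ loosely Kronecker'', my plan is to fix a partition $\mathcal{P}=\{P_0,\ldots,P_{k-1}\}$ and $\varepsilon>0$ and to construct, for all large $n$, a word $w_n\in\mathcal{P}^n$ satisfying Katok's concentration bound. By inner regularity I first choose compact $P'_j\subset P_j$ with $\mu(P'_j)>\mu(P_j)-\varepsilon/(16k)$, so that $K=\bigsqcup_j P'_j$ is compact with $\mu(K)>1-\varepsilon/16$, and let $\delta_1$ be half the minimum pairwise distance between the $P'_j$'s; then any $x,y\in K$ with $d(x,y)<\delta_1$ satisfy $\mathcal{P}(x)=\mathcal{P}(y)$. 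With $\tau=\min(\delta_1,\varepsilon/16)$, Lemma \ref{lemma1} (applied to the witness $H$ from Proposition \ref{prop2}) yields $N_0$ and $M_0\subset H$ with $\mu(M_0)>1-\tau$ such that $\fbar_{n,\tau}(x,y)\leq\tau$ whenever $x,y\in M_0$ and $n\geq N_0$. Birkhoff applied to $\chi_K$ together with Egorov produce $N_1$ and $A\subset X$ with $\mu(A)>1-\varepsilon/16$ such that for $y\in A$ and $n\geq N_1$ the visit frequency $|\{0\leq i<n:T^iy\in K\}|/n$ exceeds $\mu(K)-\varepsilon/16$. Fixing $x_0\in M_0\cap A$ and setting $w_n=\mathcal{P}^n(x_0)$, for every $y\in M_0\cap A$ and $n\geq\max(N_0,N_1)$ the $(n,\tau)$-match $\pi$ of $x_0$ and $y$ supplied by Lemma \ref{lemma1} has fit at least $n(1-\tau)$, and a simple counting argument shows that the subset of $i\in\Dom(\pi)$ with both $T^ix_0\in K$ and $T^{\pi(i)}y\in K$ has density at least $1-\varepsilon/2$; on these indices $\mathcal{P}(T^ix_0)=\mathcal{P}(T^{\pi(i)}y)$, which yields $\fbar_n(w_n,\mathcal{P}^n(y))<\varepsilon/2$. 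Since $\mu(M_0\cap A)>1-\varepsilon$, Katok's criterion is verified.

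For the direction ``loosely Kronecker $\Rightarrow$ $\mu$-FK-continuous'', my plan is to show that $\rho_{FK}(y_1,y_2)=0$ for $\mu\otimes\mu$-almost every pair and then close the argument by Fubini and Proposition \ref{prop2}. The key structural observation is a cocycle-type subadditivity: given optimal $(n,\delta)$- and $(m,\delta)$-matches of $(y_1,y_2)$ and $(T^ny_1,T^ny_2)$, concatenating them (with the second shifted by $n$) produces an $(n+m,\delta)$-match of $(y_1,y_2)$ whose fit equals the sum of the two individual fits, so
\[
(n+m)\,\fbar_{n+m,\delta}(y_1,y_2)\;\leq\; n\,\fbar_{n,\delta}(y_1,y_2)+m\,\fbar_{m,\delta}(T^ny_1,T^ny_2).
\]
Hence $a_n(y_1,y_2)=n\fbar_{n,\delta}(y_1,y_2)$ is a bounded subadditive cocycle over $(X\times X,\mu\otimes\mu,T\times T)$, and Kingman's subadditive ergodic theorem guarantees $\mu\otimes\mu$-almost sure (and $L^1$) convergence of $\fbar_{n,\delta}$ to some $(T\times T)$-invariant $f_\delta$. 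On the other hand, applying Katok to any partition $\mathcal{P}$ with $\diam\mathcal{P}<\delta$ produces words $w_n$ with $\int\fbar_n(w_n,\mathcal{P}^n(\cdot))\,d\mu\leq 2\varepsilon$ for $n$ large, and the triangle inequality for $\fbar_n$ together with the diameter bound $\fbar_{n,\delta}(y_1,y_2)\leq\fbar_n(\mathcal{P}^n(y_1),\mathcal{P}^n(y_2))$ then forces $\int\fbar_{n,\delta}\,d(\mu\otimes\mu)\leq 4\varepsilon$ for $n$ large. Letting $\varepsilon\to 0$ yields $\int f_\delta\,d(\mu\otimes\mu)=0$, so $f_\delta\equiv 0$ a.e.; intersecting the resulting full-measure sets along a sequence $\delta_k\to 0$ gives $\rho_{FK}=0$ for $\mu\otimes\mu$-a.e.\ pair, and Fubini supplies a $y^*$ with $\mu(\{y:\rho_{FK}(y^*,y)=0\})=1$, which together with the triangle inequality for $\rho_{FK}$ provides the set $H$ demanded by Proposition \ref{prop2}.

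I expect the main technical obstacle to lie in the forward direction, where three exceptional sets---the $\mu$-null complement of $H$, the $\tau$-loss in Lemma \ref{lemma1}, and the measure-$<\varepsilon/16$ loss from Egorov---must be balanced so that, after all restrictions, the density of indices within an $(n,\tau)$-match falling into the partition-continuity region $K$ still exceeds $1-\varepsilon/2$. In the backward direction the subtlety is more conceptual: because the Katok good sets $G_n$ depend on $n$, a naive Borel--Cantelli argument fails (the bounds $\mu(G_n^c)\leq\varepsilon$ do not sum over $n$), so it is the cocycle subadditivity of $\fbar_{n,\delta}$, via Kingman, that promotes a Fatou-type control of $\liminf_n\fbar_{n,\delta}$ to a pointwise limit.
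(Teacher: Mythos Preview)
Your forward direction ($\mu$-FK-continuous $\Rightarrow$ loosely Kronecker) is essentially the paper's argument: both pass through Proposition~\ref{prop2}, approximate the given partition by well-separated compact pieces, invoke Lemma~\ref{lemma1} to upgrade $\rho_{FK}=0$ on $H$ to a uniform-in-$n$ bound on $\fbar_{n,\tau}$, and combine this with a Birkhoff/Egorov control of visits to the bad set to verify Katok's criterion. Your bookkeeping differs slightly (you avoid the auxiliary use of \cite[Lemma~46]{kwietniaklacka17} by a direct counting of indices with both endpoints in $K$), but the structure is the same.

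Your backward direction, however, takes a genuinely different route from the paper. The paper argues directly from the \emph{definition} of loosely Kronecker: it realises $(X,\mu,T)$ as the induced system on a subset $B$ of a compact group rotation $(Y,S,\nu)$ with $\nu(B)$ close to $1$, uses Lusin to make the isomorphism $\phi$ uniformly continuous on a large compact $M$, and then builds an explicit $(m,\eps/2)$-match between two nearby points $x,y$ by tracking the return times of $\phi(x),\phi(y)$ to $\phi(M)$ under $S$; the density of the match comes from the pointwise ergodic theorem for $S$ on $\phi(M)$, and the contradiction with $\mu$-FK-sensitivity then yields $\mu$-FK-continuity via Theorem~\ref{strongsensitive}. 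You instead use Katok's criterion a second time, now as a hypothesis, coupled with the observation that $n\mapsto n\fbar_{n,\delta}$ is a subadditive cocycle over $T\times T$; Kingman then turns the $\limsup$ in $\fbar_\delta$ into an a.e.\ and $L^1$ limit $f_\delta$, and the $\int\fbar_{n,\delta}\le 4\varepsilon$ estimate (via a partition of mesh $<\delta$ and the inequality $\fbar_{n,\delta}(y_1,y_2)\le\fbar_n(\mathcal P^n(y_1),\mathcal P^n(y_2))$) forces $f_\delta=0$ a.e. This approach is correct and arguably more symmetric: it treats Katok's criterion as a two-sided bridge and avoids the induced-map and Lusin technicalities entirely. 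The paper's approach, on the other hand, is more self-contained (no appeal to Kingman) and makes the geometric source of the matches transparent: they are literally inherited from the isometry $S$.
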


\begin{proof}
Assume $(X,\mu,T)$ is loosely Kronecker and $(X,T)$ is $\mu$-FK-sensitive with sensitivity
constant $\varepsilon$. Since $(X,\mu,T)$ is loosely Kronecker, there exist an
isometric TDS $(Y,S)$ (a group rotation equipped with the Haar measure $\nu$) and a Borel set $B\subset Y$,
with $\nu(B)\geq1-\varepsilon/5,$ such that $(X,\mu,T)$ is isomorphic to $(B,\nu_{B},S_{B})$ via $\phi\colon X\to B$. 
By neglecting sets of measure zero we may assume that $\phi$ is a bijection. 
By Lusin's Theorem, there exists a compact
set $M\subset X$ with $\mu(M)\nu(B)\geq 1-\varepsilon/4$ such that 
$\phi|_M\colon M\to\phi(M)$ is uniformly continuous. Hence $\phi^{-1}\colon\phi(M)\to M$ is also uniformly continuous. Note that $\nu_B(\phi(M))=\mu(M)$, thus $\nu(\phi(M))=\nu(B)\mu(M)$.
Given $\eps/2$, we use uniform continuity of $\phi^{-1}$ to find an appropriate $\delta'$, and then we use uniform continuity of $\phi$ to pick $\delta$ for $\delta'$. Using that $S$ is an isometry, it follows that such a
$\delta$ satisfies the following: if $x,y\in M$ and
$d(x,y)\leq\delta$, then
\[d(\phi^{-1}\circ S^{n}\circ \phi(x),\phi^{-1}\circ S^{n}\circ\phi
(y))\leq\varepsilon/2\]
for every $n\in\mathbb{N}$ with $S^{n}\circ\phi(x),S^{n}\circ\phi(y)\in \phi (M)$. 

Let $Y_{M}\subset \phi(M)$ be the set of points in $\phi(M)$ which are $\phi(M)$-regular with respect to the map $S$ and the measure $\nu$, that is, for every $\xi\in Y_M$, we have
\[
\nu(\phi(M))=\mu(M)\nu(B)=\lim_{n\to\infty}\frac{|\{0\le j<n: S^j(\xi)\in \phi(M)\}|}{n}.
\]
For later use, we also define for every $\xi\in Y_M$ the function $c_\xi$, which counts the number of visits of $S$-orbit of $\xi$ to $B$ up to a certain time $n\ge 0$, that is, \[
 c_{\xi}(n):= \left\vert \{0\le i\leq n : S^{i}(\xi)\in B\}\right\vert.\]
Note that for every $\xi\in Y_M$  and $n\ge 0$ such that $S^n(\xi)\in B$, there is $p\in M$ such that  $\phi(p)=\xi$, and
\[
S^n(\xi)=S_B^{c_\xi(n)-1}(\xi)=\phi(T^{c_\xi(n)-1}(p)).
\]

Now, by the pointwise ergodic theorem, we have $\nu(Y_M)=\nu(\phi(M))$. Thus, there is $z\in M$ such that
\[
\mu(B_{\delta/2}(z)\cap M\cap \phi^{-1} (Y_{M}))>0.
\]
Let $x,y\in B_{\delta/2}(z)\cap M\cap \phi^{-1} (Y_{M})$. We will show that
$\rho_{FK}(x,y)\leq\varepsilon/2$; contradicting $\mu$-FK-sensitivity. This ends the first part of the proof since by Theorem \ref{strongsensitive} a system which is not $\mu$-FK-sensitive has to be $\mu$-FK-continuous.

It remains to prove $\rho_{FK}(x,y)\leq\varepsilon/2$. Since $x,y\in \phi^{-1}(Y_M)$ we get that $\phi(x),\phi(y)$ must be $\phi(M)$ regular. We claim that the set $A_M(x,y)$ consisting of those times $n$ that
$S^{n}\circ\phi(x)$ and $S^{n}\circ\phi(y)$ are both in $\phi(M)$ has lower density bounded below by $1-\eps/2$. Indeed, since
\[
 A_{M}(x,y):=\left\{  n\in\mathbb{N}:S^{n}(\phi(x)),S^{n}(\phi(y))\in
\phi(M)\right\},
\]
is the intersection of two sets with asymptotic density
$\mu(M)\nu(B)\geq 1-\varepsilon/4$, then
\[
\underline{D}(A_{M}(x,y))\ge 1-\eps/2.
\]

For $n\in A_M(x,y)$, we set $d(n):=c_x(n)-1$ and $r(n):=c_y(n)-1$.
Note that we have $0\in A_M(x,y)$ and $d(0)=r(0)=0$.
Write $A_M(x,y)$ as an increasing sequence
that is, let
\[
A_M(x,y)=\{a(n) : n\in \mathbb{N}\},\quad\text{where } a(0)<a(1)<a(2)<\ldots<a(n)<\ldots.
\]
We will use the following well-known fact,
\[{\underline {D}}(A_{M}(x,y))=\liminf_{n \rightarrow \infty} \frac{n}{a(n)}.
\]
Now, we define
\[
\mathcal{D}:=\{d(n): n\in \mathbb{N}\},\quad\text{and}\quad \mathcal{R}:=\{r(n): n\in \mathbb{N}\}.
\]
For every $n\in\mathbb{N}$, we have $d(n),r(n)\le a(n)$, so
\[
{\underline {D}}(\mathcal{D})=\liminf_{n \rightarrow \infty} \frac{n}{d(n)}\ge \liminf_{n \rightarrow \infty} \frac{n}{a(n)}={\underline {D}}(A_{M}(x,y)).
\]
Similarly, we get that ${\underline {D}}(\mathcal{R})\ge {\underline {D}}(A_{M}(x,y))$.
For $m\ge 1$, we define $k(m)$ to be the number of ``joint'' visits of $\phi(x)$ and $\phi(y)$ to $\phi(M)$ among the $m$ first visits of each of point to $B$ (under $S$), that is, we set
\[k(m):=
\left\vert\{\ell\ge 0:\ell\in A_M(x,y)\text{ and }\max\{c_x(\ell),c_y(\ell)\}< m\}\right\vert.\]

Let $\mathcal{K}=\{k(1),k(2),\ldots\}$. Clearly
\[
k(m)=\min\{|\mathcal{D}\cap\{0,1,\ldots,m-1\}|,|\mathcal{R}\cap\{0,1,\ldots,m-1\}|\},
\]
so
\[
\liminf_{m\to\infty}k(m)/m\ge\min\{\underline{D}(\mathcal{R}),\underline{D}(\mathcal{D})\}\ge \underline{D}(A_M(x,y))\ge 1-\eps/2.
\]
Now, we define
\begin{align*}
	D_{m} &  :=\{d(0),d(1),\ldots,d(k(m)-1)\}=\mathcal{D}\cap\{0,1,\ldots,m-1\},	 \\
	R_{m} &  :=\{r(0),r(1),\ldots,r(k(m)-1)\}=\mathcal{R}\cap\{0,1,\ldots,m-1\}.
\end{align*}

We set $\pi_m$ as the order preserving bijective map $\pi_m\colon D_{m}
\rightarrow R_{m}$. It is easy to see that $\pi_m$ is an $(m,\eps/2)$-match between
$x$ and $y$. Using this and the fact that $k(m)=\left\vert D_{m}\right\vert =\left\vert R_{m}\right\vert
$, we obtain
\[
\fbar_{m,\eps/2}(x,y)\le 1-\frac{k(m)}{m}.
\]
From a previous estimate, it follows that
\[
\fbar_{\eps/2}(x,y)=\limsup_{m\to\infty}\fbar_{m,\eps/2}(x,y)\le 1-\liminf_{m\to\infty}\frac{k(m)}{m}\le\eps/2.
\]
Thus, $\rho_{FK}(y,z)\leq\varepsilon/2$.

Now we will prove the other direction. By Proposition \ref{prop2}, there exists a Borel set $M\subset X$ with $\mu(M)=1$ such that, $\rho_{FK}(x,y)=0$ for every $x,y\in M$.
Let $\mathcal{P}=\{P_0,P_1,\ldots,P_{k-1}\}\in\bP^k(X)$.
We will prove that $\mathcal{P}$ satisfies the condition of Theorem \ref{katokcrit}.
Let $\eps>0$. Using the regularity of $\mu$, one can see that there exists $\mathcal{R}=\{R_0,R_1,\ldots,R_k\}\in \bP^{k+1}(X)$ with $d^\mu_1(\mathcal{P},\mathcal{R})<\varepsilon/10$, $\mu(R_k)\leq \varepsilon/10$,  and $R_j\subseteq P_j$ for $0\le j<k$ (to find such $\mathcal{R}$ we choose for each $P_j\in \mathcal{P}$  a compact set $R_j\subseteq P_j$ with $\mu (P_j\setminus R_j)$ sufficiently small and then we set $R_k:=X\setminus (R_0\cup R_1\cup\ldots\cup R_{k-1})$).

Furthermore, if $\eps'<\varepsilon$ is sufficiently small, then $R_k$ and $\mathcal{R}$ can be chosen so that if $d(x,y)<\eps'$ for $x,y\in X\setminus R_k$, then $\mathcal{R}(x)=\mathcal{R}(y)$ (we call this condition the \emph{separation property} of $\mathcal{R}$).
Using ergodicity and Egorov's theorem, we can find $N_0\in\N$ and  a Borel set $M_0\subset X$ with $\mu(M_0)>1-\eps/4$ such that for every $z\in M_0$ and $n\ge N_0$, we have
\begin{equation}\label{cond:rk}
\left|\{0\le j <n :T^jz\in R_k\}\right|\le\eps/9.
\end{equation}
By Lemma~\ref{lemma1}, there exists $N_1\in \N$ with $N_1\ge N_0$ and a Borel set $M_1\subset M_0$, with $\mu(M_1)>1-\varepsilon/2$, such that for every $x,y\in M_1$ and $n\ge N_1$, we can find $\ell\ge (1-\varepsilon/9)n$ and two finite sequences of  integers
$0\le i_1<i_2<\ldots<i_\ell<n$, $0\le j_1<j_2<\ldots<j_\ell<n$ such that
\[
d(T^{i_s}(x),T^{j_s}(y))<\eps', \quad\text{for } s=1,\ldots,\ell.
\]
By \eqref{cond:rk}, the set $$S':=\{s\in\{1,\ldots,\ell\}: T^{i_s}(x),T^{j_s}(y)\in X\setminus R_k \}$$ has cardinality at least
$(1-\eps/3)n$. By the separation property of $\mathcal{R}$, for $s\in S'$ we have $\mathcal{R}(T^{i_s}(x))=\mathcal{R}(T^{j_s}(y))$. It follows that for every $n\ge N_1$, if we take any $x\in M_1$ and set $w=\bigcap_{i=0}^{n-1}\mathcal{R}(T^{i}x)=\mathcal{R}^n(x)$, then for every $y\in M_1$ and $w'=\bigcap_{i=0}^{n-1}\mathcal{R}(T^{i}y)$ we have $\fbar_{n}(w,w^{\prime})<\varepsilon/3$. Therefore,
\begin{equation}
\label{eq1}
\forall x\in M_1\quad
\mu
(\{w^{\prime}\in\mathcal{R}^{n}:\fbar_{n}(\mathcal{R}^{n}(x),w^{\prime})<\varepsilon/3
\})\geq \mu(M_1)\ge 1-\varepsilon/2.
\end{equation}

Using \cite[Lemma 46]{kwietniaklacka17}, we can find a set $M_2\subset M_1$ and an integer $N_2\ge N_1$ such that $\mu(M_2)>1-\eps$ and if $z\in M_2$ and $n\ge N_2$, then
\begin{equation}\label{eqn:dpr}
\fbar_n(\mathcal{R}(z),\mathcal{P}(z))<d^\mu_1(\mathcal{P},\mathcal{R})+\varepsilon/10<\varepsilon/3.
\end{equation}

Fix $x_0\in M_2$ and $n\ge N_2$. Set $w_0=\mathcal{R}^n(x_0)$.
Then, by \eqref{eq1}, for every $z\in M_2$ (recall that $M_2\subset M_1$) we have $\fbar(w_0,\mathcal{R}^n(z))<\eps/3$.
Using \eqref{eqn:dpr}, we obtain that for every $z\in M_2$, we have
\[
\fbar_n(\mathcal{P}^n(z),\mathcal{P}^n(x_0))\le \fbar_n(\mathcal{P}^n(z),\mathcal{R}^n(z)) + \fbar_n(\mathcal{R}^n(z),w_0)+\fbar_n(w_0,\mathcal{P}^n(x_0))<\eps.
\]
Therefore,
\[
\mu
(\{w^{\prime}\in\mathcal{P}^{n}:\fbar_{n}(\mathcal{P}^n(x_0)),w^{\prime})<\varepsilon
\})\geq \mu(M_2)\ge 1-\varepsilon.
\]
We conclude the proof using Katok's criterion (Theorem \ref{katokcrit}).
\end{proof}
Summarising all our characterisations we obtain the following corollary.
\begin{corollary}
\label{cor:todos}Let $(X,T)$ be a TDS and $\mu\in M^e_1(X,T)$. The following conditions
are equivalent:
\begin{enumerate}
\item $(X,\mu,T)$ is loosely Kronecker, \label{cond:todos1}
\item there exists a Borel set $M\subset X$ such that $\mu(M)=1$ and $\rho_{FK}(x,y)=0$ for every $x,y\in M$,
\label{cond:todos2}
\item $(X,T)$ is $\mu$-FK-continuous,
\item $(X,T)$ is not $\mu$-FK-sensitive, \label{cond:todos3}
\item $(X,T)$ is not $\mu$-FK-expansive, and \label{cond:todos4}
\item $\mu(B_{\varepsilon}^{FK}(x))>0$ for every  $\varepsilon>0$ and $\mu$-a.e. $x\in X$. \label{cond:todos5}
\end{enumerate}
\end{corollary}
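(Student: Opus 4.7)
The plan is to assemble the equivalences from previously established results in the paper and then close the loop by linking condition (6) to the chain via the orbit-invariance of $\rho_{FK}$ and ergodicity. First, Theorem \ref{LKcharac} gives directly that $(X,\mu,T)$ is loosely Kronecker if and only if $(X,T)$ is $\mu$-FK-continuous, so (1) $\Leftrightarrow$ (3). Proposition \ref{prop2}, applied to the ergodic $\mu$, supplies (2) $\Leftrightarrow$ (3). Theorem \ref{strongsensitive} yields the equivalence of $\mu$-FK-sensitivity, $\mu$-FK-expansivity, and the failure of $\mu$-FK-continuity, hence (3) $\Leftrightarrow$ (4) $\Leftrightarrow$ (5).

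It remains to tie (6) into the chain. The plan is to show that (6) is equivalent to the negation of condition (\ref{ss:iii}) of Theorem \ref{strongsensitive}, namely ``there exists $\varepsilon>0$ with $\mu(B_\varepsilon^{FK}(x))=0$ for $\mu$-a.e.\ $x$.'' The key observation is that by Lemma \ref{lem:fk-prop}(\ref{fk-prop-iii}) we have $\rho_{FK}(Tx,y)=\rho_{FK}(x,y)$ for every $x,y\in X$, so $B_\varepsilon^{FK}(Tx)=B_\varepsilon^{FK}(x)$, and hence the function $f_\varepsilon(x):=\mu(B_\varepsilon^{FK}(x))$ is $T$-invariant. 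Since $\mu$ is ergodic, $f_\varepsilon$ is $\mu$-a.e.\ constant, yielding the dichotomy: for each fixed $\varepsilon>0$, either $\mu(B_\varepsilon^{FK}(x))=0$ for $\mu$-a.e.\ $x$, or $\mu(B_\varepsilon^{FK}(x))>0$ for $\mu$-a.e.\ $x$. Intersecting the full-measure sets obtained for the countable family $\varepsilon=1/n$, and using the monotonicity of $\varepsilon\mapsto B_\varepsilon^{FK}(x)$, I can then exchange the order of the quantifiers ``for every $\varepsilon>0$'' and ``for $\mu$-a.e.\ $x$'' appearing in (6). This identifies (6) with the negation of Theorem \ref{strongsensitive}(\ref{ss:iii}), and thus closes the loop with (3).

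I do not anticipate any genuine obstacle; the only delicate step is the $T$-invariance argument for $f_\varepsilon$, which leverages orbit-invariance of $\rho_{FK}$ (a feature not shared by the Besicovitch pseudometric, as highlighted in the remark after Lemma \ref{lem:fk-prop}) to justify both the dichotomy and the quantifier swap. Everything else is a matter of quoting the equivalences already in place.
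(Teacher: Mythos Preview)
Your proposal is correct and follows essentially the same route as the paper, which presents the corollary as a direct summary of Theorem~\ref{LKcharac}, Proposition~\ref{prop2}, and Theorem~\ref{strongsensitive} without further proof. The only minor remark is that the dichotomy you derive from orbit-invariance of $\rho_{FK}$ is already packaged in Lemma~\ref{ecu} (which even gives that $\mu(B_\varepsilon^{FK}(x))\in\{0,1\}$ a.e.), so you could simply cite it to identify (6) with the negation of Theorem~\ref{strongsensitive}(\ref{ss:iii}).
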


Since loosely Kronecker systems have zero entropy, we obtain the following corollary.
\begin{corollary}
	\label{zeroent}
	Every FK-continuous TDS has zero topological entropy.
\end{corollary}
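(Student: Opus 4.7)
The plan is to combine FK-continuity's implication for every invariant measure with Theorem \ref{LKcharac} and the variational principle for topological entropy. First, I would note that by definition, if $(X,T)$ is FK-continuous then every $x\in X$ is an FK-continuity point, so in particular $(X,T)$ is $\mu$-FK-continuous for every $T$-invariant Borel probability measure $\mu$ (one can take $M_\tau = X$ in Definition \ref{mumeanequi}, since the $\delta$ produced at each $x\in X$ is uniform on compact $X$ via a standard compactness argument applied to the $\EqFKe$ sets).

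Next, for every ergodic $T$-invariant Borel probability measure $\mu\in M^e_1(X,T)$, Theorem \ref{LKcharac} gives that the MPS $(X,\mu,T)$ is loosely Kronecker. Since loosely Kronecker systems are by definition Kakutani equivalent to a Kronecker system, they have zero Kolmogorov-Sinai entropy (this is part of the remark following the definition of loosely Kronecker: such a MPS is loosely Bernoulli with zero entropy). Hence $h_\mu(T)=0$ for every ergodic $T$-invariant measure.

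Finally, I would invoke the variational principle: for any TDS $(X,T)$,
\[
\htop(T) = \sup\{h_\mu(T) : \mu \in M^e_1(X,T)\}.
\]
(If $M^e_1(X,T)$ is empty, then $X$ itself is empty by Krylov--Bogoliubov, and the statement is trivial.) Since every summand in the supremum is zero, we conclude $\htop(T)=0$.

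The only point requiring a little care is the passage from pointwise FK-continuity to the uniform formulation of $\mu$-FK-continuity required by Definition \ref{mumeanequi}; this is handled by Lemma \ref{invariant} together with compactness of $X$, which lets one extract a single modulus $\delta(\varepsilon)$ valid on all of $X$ (set $M_\tau = X$). Once that routine step is in place, the corollary is an immediate consequence of Theorem \ref{LKcharac} plus the variational principle, so there is no substantive obstacle.
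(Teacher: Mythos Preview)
Your argument is correct and matches the paper's approach: the paper derives the corollary directly from the preceding results with the one-line justification ``Since loosely Kronecker systems have zero entropy,'' which unpacks to exactly your chain (FK-continuous $\Rightarrow$ $\mu$-FK-continuous for every $\mu$ by the remark after Definition~\ref{mumeanequi}, then Theorem~\ref{LKcharac}, then the variational principle). Your extra care about uniformity is fine but unnecessary here, since the paper already records FK-continuous $\Rightarrow$ $\mu$-FK-continuous as ``clear'' (it is just Heine--Cantor for the continuous map $(X,d)\to(X,\rho_{FK})$ on the compact space $X$).
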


One can compare Theorem \ref{LKcharac} with the following result (note that its proof in \cite{weakforms} uses different techniques than our proof of Theorem \ref{LKcharac}).
\begin{theorem}[\cite{weakforms}, Corollary 39]
	\label{chakro}
	 Let $(X,T)$ a TDS and $\mu\in M^e_1(X,T)$. Then $(X,\mu,T)$  is a Kronecker system if and only if $(X,T)$ is $\mu$-mean equicontinuous.
\end{theorem}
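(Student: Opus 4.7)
The plan is to prove Theorem \ref{chakro} by closely mirroring the proof of Theorem \ref{LKcharac}, with two systematic substitutions: replace the Feldman-Katok pseudometric $\rho_{FK}$ with the Besicovitch pseudometric $\rho_B$, and the edit distance $\fbar_n$ with the Hamming distance $\dbar_n$; and replace Kakutani equivalence to a rotation with direct measure-theoretic isomorphism to a rotation (Halmos-von Neumann). The latter substitution will simplify the ``Kronecker $\Rightarrow$ $\mu$-mean equicontinuous'' direction, since no induced-transformation bookkeeping is needed. The analogue of Katok's criterion (Theorem \ref{katokcrit}) will be the classical Hamming-distance characterisation of discrete spectrum: a MPS is Kronecker iff for every finite partition $\mathcal{P}$ and every $\varepsilon>0$ there exists $N$ such that for $n\ge N$ one can find $w\in\mathcal{P}^n$ with $\mu(\{w'\in\mathcal{P}^n:\dbar_n(w,w')<\varepsilon\})\ge 1-\varepsilon$.

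For ``Kronecker $\Rightarrow$ $\mu$-mean equicontinuous'', I would fix an isomorphism $\phi\colon(X,\mu,T)\to(Y,\nu,S)$ with $(Y,S)$ a group rotation carrying the Haar measure $\nu$. Given $\tau>0$, Lusin's theorem provides a compact $M_\tau\subset X$ with $\mu(M_\tau)\ge 1-\tau$ on which $\phi$ is continuous; the compact image $\phi(M_\tau)$ then satisfies $\nu(\phi(M_\tau))\ge 1-\tau$, and $\phi|_{M_\tau}$ is a homeomorphism onto its image, so both $\phi$ and $\phi^{-1}$ are uniformly continuous on their respective compact domains. For each $\varepsilon>0$ I would pick $\delta>0$ so that $d(x,y)<\delta$ with $x,y\in M_\tau$ forces $d_Y(\phi(x),\phi(y))<\varepsilon/2$; since $S$ is an isometry, this inequality propagates along all $S$-orbits. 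For $x,y\in M_\tau$ whose $\phi$-images are $\phi(M_\tau)$-generic (a full-measure subset by the pointwise ergodic theorem), the set of $n$ with $S^n\phi(x),S^n\phi(y)\in\phi(M_\tau)$ has lower density at least $1-2\tau$, and on such $n$ uniform continuity of $\phi^{-1}$ gives $d(T^n x,T^n y)<\varepsilon/2$. A direct density estimate then yields $\rho_B(x,y)\le\varepsilon/2+2\tau\,\diam(X)$; choosing $\tau<\varepsilon/(4\diam X)$ completes the direction.

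For the converse, Proposition \ref{prop2} has no direct analogue, because $\rho_B$ is not orbit-invariant: on a rotation $\rho_B$ reduces to the ambient metric, so no positive-measure Borel set can satisfy $\rho_B\equiv 0$. Instead, I would work directly from $\mu$-mean equicontinuity and Egorov's theorem to extract, for any $\tau>0$, a Borel set $M_\tau$ of measure $>1-\tau$ and an integer $N=N(\tau)$ such that for $x,y\in M_\tau$ and $n\ge N$ we have $\frac{1}{n}|\{0\le j<n:d(T^j x,T^j y)\ge\tau\}|<\tau$; this plays the role of Lemma \ref{lemma1}. Given a finite partition $\mathcal{P}$ and $\varepsilon>0$, I would refine $\mathcal{P}$ to an auxiliary partition $\mathcal R$ with the separation property used in the proof of Theorem \ref{LKcharac} (points within distance $\tau$ lie in the same atom of $\mathcal R$, except on a small boundary set $R_k$). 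Combining the Hamming-scale control with the Birkhoff bound on visits to $R_k$ should produce $x_0\in M_\tau$ such that, for $y\in M_\tau$ and $n$ large, $\dbar_n(\mathcal P^n(x_0),\mathcal P^n(y))<\varepsilon$, verifying the criterion above and hence delivering Kronecker.

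The main obstacle will be the converse direction. The absence of an orbit-invariance analogue of Proposition \ref{prop2} forces one to work directly with the definition of $\mu$-mean equicontinuity and with the partition-separation trick rather than with the clean pointwise identity $\rho_B\equiv 0$; one must also invoke the $\dbar_n$-characterisation of discrete spectrum (in place of Katok's $\fbar_n$-criterion), and carefully verify the Hamming analogue of Lemma \ref{lemma1} so that Egorov's theorem supplies uniform control on the time scale $n\ge N$ over a single set of large measure.
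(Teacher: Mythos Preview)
First, note that the paper does \emph{not} supply its own proof of Theorem~\ref{chakro}: it is quoted from \cite{weakforms}, and immediately before the statement the authors remark explicitly that ``its proof in \cite{weakforms} uses different techniques than our proof of Theorem~\ref{LKcharac}.'' So there is no in-paper proof to compare against; your plan is, by design, an attempt to transplant the $\rho_{FK}$ argument to $\rho_B$, which the authors themselves flag as a different route from the original.

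Your forward implication (Kronecker $\Rightarrow$ $\mu$-mean equicontinuous) is essentially fine. The converse, however, contains a genuine gap: the ``classical Hamming-distance characterisation of discrete spectrum'' you invoke is \emph{false}. Take the irrational circle rotation with the two-interval partition $\mathcal P=\{[0,1/2),[1/2,1)\}$. By equidistribution, for $x,y\in S^1$ one has $\dbar_n(\mathcal P^n(x),\mathcal P^n(y))\to 2\min(|x-y|,1-|x-y|)$, so for any fixed $w=\mathcal P^n(x_0)$ the set $\{y:\dbar_n(\mathcal P^n(y),w)<\varepsilon\}$ has measure roughly $\varepsilon$, not $1-\varepsilon$. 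No single $\dbar_n$-ball can carry nearly full measure; Kronecker systems do \emph{not} satisfy a one-word $\dbar$-criterion. This is exactly the phenomenon you yourself flagged: $\rho_B$ is not orbit-invariant, so there is no mechanism analogous to Proposition~\ref{prop2} to inflate a small $\rho_B$-ball to full measure via ergodicity. In the $\rho_{FK}$ proof that step is indispensable---it is what collapses ``nearby points have close names'' to ``all points have close names''---and without it your outline cannot produce an $x_0$ whose $\mathcal P$-name is $\dbar_n$-close to that of \emph{every} $y\in M_\tau$.

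The upshot is that the $\dbar$/$\fbar$ substitution is not merely cosmetic. Katok's $\fbar$-criterion works for rotations precisely because $\fbar$ permits time shifts, allowing the orbit of any point to be matched against the orbit of any other; $\dbar$ forbids this, and the correct characterisations of discrete spectrum via $\dbar$ (compactness of the orbit closure of every $L^2$ function, or equivalently finitely many $\dbar$-balls covering most names) are genuinely different in form. This is presumably why \cite{weakforms} proceeds by other means (spectral/Koopman arguments and the maximal equicontinuous factor) rather than by a Katok-type criterion, as the present paper notes.
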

Actually this result also holds for non-necessarily ergodic Borel invariant probability measures, which is one of the main results of \cite{huang2018bounded}. Another result similar to our Theorem \ref{LKcharac} and \cite[Corollary 39]{weakforms} appeared recently in \cite{cai2019measure}. It characterises rigid MPS.

\section{Topological loosely Kronecker systems}\label{sec:TLK}
Conditions \eqref{cond:todos1} and \eqref{cond:todos2} of Corollary \ref{cor:todos} motivate the following definition.

\begin{definition}
	\label{tlk}
	We say a TDS $(X,T)$ is \textbf{topologically loosely Kronecker} if for every $x,y\in X$ we have $\rho_{FK}(x,y)=0$.
\end{definition}
Recall that a transitive TDS is topologically loosely Kronecker if and only if it is FK-continuous (Proposition \ref{prop1}). The aim of this section is to extend this characterisation and prove that being topologically loosely Kronecker is the same as being a topological model for a loosely Kronecker MPS.

\begin{remark}
Note that  for a TDS $(X,T)$ being ``Besicovitch trivial'', that is, satisfying $\rho_{B}(x,y)=0$ for every $x,y\in X$ is a very strong condition. By \cite[Theorem 1.2]{garcia2017mean}, it holds if, and only if, $(X,T)$ is uniquely ergodic and $(X,\mu,T)$ is trivial (i.e. isomorphic to a one point system) .
\end{remark}

First, we note that the following result is a consequence of \cite[Corollary 34]{kwietniaklacka17}.
\begin{proposition}
	\label{unierg}
Every topologically loosely Kronecker TDS is uniquely ergodic.
\end{proposition}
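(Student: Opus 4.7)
The plan is to prove unique ergodicity by contradiction, by ruling out the possibility that two distinct ergodic invariant measures coexist. The key ingredient is the same tool that was already used in the proof in Section \ref{sec:AY}: namely, \cite[Fact 32]{kwietniaklacka17}, which asserts that two ergodic $T$-invariant Borel probability measures coincide whenever they possess generic points at $\rho_{FK}$-distance zero from one another. Under the assumption that $(X,T)$ is topologically loosely Kronecker, \emph{every} pair of points in $X$ is at $\rho_{FK}$-distance zero, so this hypothesis will be trivially satisfied for any chosen generic points.

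More precisely, suppose for contradiction that there exist two distinct $T$-invariant Borel probability measures on $X$. Then by the ergodic decomposition theorem we may assume that these two measures, call them $\mu_1$ and $\mu_2$, are themselves ergodic. By the Birkhoff pointwise ergodic theorem, for each $i\in\{1,2\}$ the set of $\mu_i$-generic points has full $\mu_i$-measure; in particular, it is nonempty, so we can pick $x_i\in X$ generic for $\mu_i$.

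By the defining property of topological loose Kroneckerness, $\rho_{FK}(x_1,x_2)=0$. Applying \cite[Fact 32]{kwietniaklacka17} (or equivalently invoking \cite[Corollary 34]{kwietniaklacka17} directly) yields $\mu_1=\mu_2$, contradicting the choice of $\mu_1\neq\mu_2$. Therefore $(X,T)$ has a unique invariant Borel probability measure.

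I do not anticipate a genuine obstacle in this argument, which is essentially a citation combined with the ergodic decomposition. The only point that requires a little care is the reduction from "at least two invariant measures" to "at least two ergodic invariant measures"; this is standard but should be stated to justify why one may apply Fact 32, which only speaks about ergodic measures and their generic points.
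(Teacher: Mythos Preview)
Your proposal is correct and essentially matches the paper's approach: the paper simply records that the proposition is a consequence of \cite[Corollary 34]{kwietniaklacka17}, and your argument via ergodic decomposition plus \cite[Fact 32]{kwietniaklacka17} is just an unpacking of that same citation (indeed, you yourself note that one can invoke Corollary 34 directly).
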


\begin{remark}
A topologically loosely Kronecker TDS need not to be strictly ergodic. A trivial example is a TDS $(X,T)$, where $X=\{x,y\}$ is a discrete space and $T(x)=T(y)=x$. It is easy to construct a more elaborated examples of this kind.
\end{remark}
The support of a loosely Kronecker measure is not in general uniquely ergodic.
\begin{remark}
 A construction of a minimal non-uniquely ergodic TDS with ergodic measures $\mu$ and $\nu$ such that $\mu$ is loosely Kronecker and $\nu$ is not is given in \cite{complexityloosely}.
\end{remark}
\begin{remark}
Topological representations of the \emph{Pascal adic} (Borel) transformation and other \emph{adic} transformations, were constructed in \cite[Theorem 4.1]{mela2005dynamical} and \cite{mela2006}.
These TDSs (representations) are ``almost minimal'' (all but countably many orbits are dense) and every ergodic invariant measure for an adic transformation is isomorphic to an invariant measure of the topological representation. Furthermore, these measures are loosely Kronecker \cite{janvresse04,mela2006} and fully supported on the TDS (because of the ``almost minimality''). It is known that every adic system has uncountably many ergodic invariant measures, hence these TDSs (representing adic transformations) are not uniquely ergodic and have many fully supported loosely Kronecker invariant measures.
\end{remark}
\begin{theorem}
	\label{uniquelyerg}
Let $(X,T)$ be a TDS. The following conditions are equivalent:
\begin{enumerate}
	\item $(X,T)$ is topologically loosely Kronecker,
	\item $(X,T)$ is uniquely ergodic, with $M^e_1(X,T)=\{\mu\}$, and $(X,\mu,T)$ is loosely Kronecker.
\end{enumerate}
\end{theorem}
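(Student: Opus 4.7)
The implication (1)$\Rightarrow$(2) is short: by Proposition \ref{unierg} the system $(X,T)$ is uniquely ergodic, its unique invariant measure $\mu$ is automatically ergodic, and taking $M=X$ in Corollary \ref{cor:todos}\eqref{cond:todos2}$\Rightarrow$\eqref{cond:todos1} shows that $(X,\mu,T)$ is loosely Kronecker. The content of the theorem lies in the direction (2)$\Rightarrow$(1).

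For (2)$\Rightarrow$(1), I fix arbitrary $x,y\in X$ and $\eps>0$; the goal is to prove $\fbar_\eps(x,y)=0$, which yields $\rho_{FK}(x,y)\le\eps$ directly from the definition of $\rho_{FK}$ and then, letting $\eps\to 0$, gives $\rho_{FK}(x,y)=0$. The strategy combines Katok's criterion (Theorem \ref{katokcrit}) with the uniform convergence of Birkhoff averages characteristic of uniquely ergodic systems. First, choose a finite Borel partition $\mathcal P=\{P_0,\dots,P_{k-1}\}$ of $X$ with $\diam(P_i)<\eps$ and $\mu(\partial P_i)=0$ for every $i$. Given a small $\tau>0$ (to be sent to $0$ at the end), Katok's criterion furnishes an integer $n$ and a word $w_n\in\mathcal P^n$ such that $A_n:=\{z\in X:\fbar_n(w_n,\mathcal P^n(z))<\tau\}$ satisfies $\mu(A_n)\ge 1-\tau$. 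Since $A_n$ is a finite union of cylinder sets whose boundaries have $\mu$-measure zero, unique ergodicity upgrades this to a uniform Birkhoff bound: for all sufficiently large $N$ and \emph{every} $z\in X$, the proportion of $j\in\{0,\dots,N-n\}$ with $T^jz\in A_n$ exceeds $1-2\tau$.

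Applying this estimate to both $x$ and $y$, the index set $J=\{j\in\{0,\dots,N-n\}:T^jx\in A_n\text{ and }T^jy\in A_n\}$ satisfies $|J|\ge N(1-5\tau)$ for $N$ large. For each $j\in J$, both $\mathcal P^n(T^jx)$ and $\mathcal P^n(T^jy)$ lie within $\fbar_n$-distance $\tau$ of $w_n$, and a straightforward longest-common-subsequence calculation (pairing indices that are matched on both sides through $w_n$) produces a common subsequence of these two length-$n$ words of length $\ge n(1-2\tau)$; the diameter condition $\diam(P_i)<\eps$ converts this into a partial $(n,\eps)$-match between the orbit segments of $T^jx$ and $T^jy$ of fit $\ge n(1-2\tau)$. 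A greedy selection $j_1<j_2<\dots<j_L$ of elements of $J$ with $j_{k+1}\ge j_k+n$ yields $L\ge|J|/n\ge N(1-5\tau)/n$ pairwise disjoint windows $[j_k,j_k+n)\subset[0,N)$; because the windows are disjoint, the corresponding partial matches concatenate into a single order-preserving $(N,\eps)$-match of $x$ and $y$ of fit at least $L\cdot n(1-2\tau)\ge N(1-O(\tau))$. Hence $\fbar_{N,\eps}(x,y)\le O(\tau)$, and passing to $\limsup_{N\to\infty}$ and then letting $\tau\to 0$ gives $\fbar_\eps(x,y)=0$.

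The one delicate step is the uniform Birkhoff bound, since $\mathbf 1_{A_n}$ is only Borel; this is precisely where the condition $\mu(\partial P_i)=0$ is essential, as it forces $\mu(\partial A_n)=0$ and allows one to sandwich $\mathbf 1_{A_n}$ between Urysohn-type continuous functions whose Birkhoff averages converge uniformly by unique ergodicity. This is the exact point at which the two hypotheses---loose Kroneckerness (invoked via Katok's criterion) and unique ergodicity (invoked via uniform Birkhoff)---combine to push the measure-theoretic conclusion of Corollary \ref{cor:todos} to the genuinely topological statement that $\rho_{FK}$ vanishes identically on $X\times X$.
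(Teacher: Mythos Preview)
Your proposal is correct and follows essentially the same approach as the paper: choose a finite partition with small-diameter atoms and $\mu$-null boundaries, apply Katok's criterion to obtain a dominant $\mathcal P^n$-name, use unique ergodicity together with the null-boundary condition to see that \emph{every} point visits the good cylinder set with the right frequency, and concatenate the resulting block matches along disjoint windows to build an $(N,\eps)$-match of nearly full fit.

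The one noteworthy difference is organisational. The paper does not construct a partition in which every atom simultaneously has diameter $<\eps$ and $\mu$-null boundary; instead it starts from an arbitrary small-diameter partition $\mathcal Q$, shrinks each atom to a compact core, and takes suitable $\delta$-hulls to manufacture null boundaries, at the cost of introducing a ``garbage'' atom $P_0$ of small measure but uncontrolled diameter. This forces an extra layer of bookkeeping (the sets $\mathcal W_n'$ controlling appearances of $P_0$). Your route---asserting directly the existence of a partition with both properties, which is standard (cover $X$ by balls of radius $<\eps/2$ chosen with null boundaries and disjointify)---eliminates $P_0$ entirely and makes the argument cleaner. Both versions hinge on the same two ingredients at the same point: loose Kroneckerness via Katok's criterion, and unique ergodicity via uniform Birkhoff convergence for null-boundary sets.
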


\begin{proof}
A topological loosely Kronecker system  is uniquely ergodic by Theorem \ref{uniquelyerg}, and the unique invariant measure must be loosely Kronecker by Theorem \ref{LKcharac}.

For the other direction, assume that $(X,T)$ is uniquely ergodic and $\mu$ is the unique Borel $T$-invariant probability measure. Suppose also that the MPS $(X,\mu,T)$ is loosely Kronecker.
Fix $\eps>0$ and $y,z\in X$. Let $\mathcal{Q}$ be a finite measurable partition of $X$ satisfying $\diam
Q<\varepsilon$ for every $Q\in\mathcal{Q}$. We can assume that $|\mathcal{Q}|\ge 2$.
For each $Q\in\mathcal{Q}$, we
choose a compact set $K_{Q}\subset Q$ such that $\mu(Q\setminus K_{Q}%
)<\varepsilon/(12\cdot|\mathcal{Q}|)$. Since for disjoint $Q,Q^{\prime}%
\in\mathcal{Q}$ the sets $K_{Q}$ and $K_{Q^{\prime}}$ are also disjoint, we have
\[
\gamma=\min\{d(K_{Q},K_{Q^{\prime}}):Q,Q^{\prime}\in\mathcal{Q},\,Q\cap
Q^{\prime}=\emptyset\}>0.
\]
Furthermore, for every $0<\alpha<\min\{\gamma/3,\varepsilon/2\}$
the $\alpha$-hulls $K_{Q}^{\alpha}$ and $K_{Q^{\prime}}^{\alpha}$ of the sets
$K_{Q}$ and $K_{Q^{\prime}}$ are disjoint provided that $Q,Q^{\prime}%
\in\mathcal{Q}$ are disjoint. Furthermore, we choose $0<\delta<\min\{\gamma/3,\varepsilon/2\}$
such that for every $Q\in\mathcal{Q}$ the boundary of $K_{Q}^{\delta}$ is a
$\mu$-null set.
Set
\[
U=\bigcup_{Q\in\mathcal{Q}}K_{Q}^{\delta}.
\]
Let $P_{0}=X\setminus U$ and $\mathcal{P}=\{K_{Q}^{\delta}:Q\in
\mathcal{Q}\}\cup\{P_{0}\}$.
Then, $\mathcal{P}$ is a finite Borel partition
with $\mu(P_0)<\eps/12$ and for any $P\in\mathcal{P}\setminus\{P_{0}\}$ we have $\diam P<\varepsilon$.
Furthermore, for every $n\ge 1$ and every set $P$ in $\mathcal{P}^n$ the boundary of $P$ is a $\mu$-null set, thus every point in $X$ is $P$-regular with respect to $T$ and $\mu$. That is, for every $x\in X$, $n\ge 1$, and $P$ in $\mathcal{P}^n$ we have
\[
\lim_{m\to\infty}\frac{1}{m}\left|\left\{0\le j <m : T^j(x)\in P\right\}\right|=\mu(P).
\]

Applying Katok's criterion (Theorem \ref{katokcrit}), we can find $N$ such that for every $n\ge N$ there is a set $\mathcal{W}_n$ of atoms of $\mathcal{P}^n$ such that $\mu(\mathcal{W}_n)>1-\eps/6$ and for every $w,w'\in\mathcal{W}_n$ we have
$\fbar(w,w')< \eps/3$. Using Egorov's theorem, we see that there is $N'$ such that for every $n\ge N'$, the set $\mathcal{W}_n'$ of atoms $w$ of $\mathcal{P}^n$ such that $P_0$ appears in $w$ at most $n\eps/12$ times satisfies $\mu(\mathcal{W}_n')>1-\eps/6$. Let $n=\max\{N,N'\}$. Note that $\mu(\mathcal{W}_n\cap\mathcal{W}'_n)> 1-\eps/3$.
Since $y,z$ are generic points there is $K$ such that for every $k\ge K$, blocks form $\mathcal{W}_n\cap\mathcal{W}'_n$ appear with the right frequencies in the initial segment of the $\mathcal{P}$-itinerary of $y$ and $z$, respectively. In other words, we have
\begin{gather}\label{appearances}
\left|\left\{0\le j< k: T^j(y)\in\mathcal{W}_n\cap\mathcal{W}'_n\right\}\right|> k(1-\eps/3),\\
\left|\left\{0\le j< k: T^j(z)\in\mathcal{W}_n\cap\mathcal{W}'_n\right\}\right|> k(1-\eps/3).
\end{gather}
In \eqref{appearances}, we listed all appearances of blocks from $\mathcal{W}_n\cap\mathcal{W}'_n$, but we still need to enumerate non-overlapping appearances of these blocks.  Since blocks in $\mathcal{W}_n\cap\mathcal{W}'_n$ have length $n$, the number of non-overlapping appearances is bounded below by $k(1-\eps/3)/n$. Therefore, for a large $k\ge K$ we can find integers $s,t$ and $i(1),\ldots,i(s)$ and $j(1),\ldots,j(t)$ satisfying $i(s)+n\le k$, $j(t)+n
\le k$, $s,t\ge k(1-\eps/3)/n$ and $i(\ell+1)\ge i(\ell)+n$ for $\ell=1,\ldots,s-1$, $j(\ell+1)\ge j(\ell)+n$ for $\ell=1,\ldots,t-1$ and such that
setting $p=\min\{s,t\}$ we have \[
T^{i(\ell)}(y),T^{j(\ell)}(z)\in \mathcal{W}_n\cap\mathcal{W}'_n \quad\text{for }\ell=1,\ldots,p.
\]
Fix $1\le \ell\le p$ and consider $\mathcal{P}^n$ names of $T^{i(\ell)}(y)$ and $T^{j(\ell)}(z)$, denoted, respectively as, $u^{(\ell)}=u^{(\ell)}_1\ldots u^{(\ell)}_{n-1}$ and $v^{(\ell)}=v^{(\ell)}_1\ldots v^{(\ell)}_{n-1}$. Since $u^{(\ell)}, v^{(\ell)}\in \mathcal{W}_n\cap\mathcal{W}'_n$, we can find at most $n\eps/12$ appearances of $P_0$ in each of them and they must satisfy $\bar f(u^{(\ell)}, v^{(\ell)})<\eps/3$. It means that after removing at most $n(\eps/2)$ entries from each, $u^{(\ell)}$ and  $v^{(\ell)}$, we get two identical sequences, which we denote with $\bar u^{(\ell)}$ and $\bar v^{(\ell)}$, and $P_0$ appears neither in $\bar u^{(\ell)}$ nor in $\bar v^{(\ell)}$. We write $\bar u^{(\ell)}$ and $\bar v^{(\ell)}$ explicitly as
\[
\bar u^{(\ell)}=u^{(\ell)}_{a^{(\ell)}(1)}\ldots u^{(\ell)}_{a^{(\ell)}(\bar n)}\quad\text{and}\quad \bar v^{(\ell)}=v^{(\ell)}_{b^{(\ell)}(1)}\ldots v^{(\ell)}_{b^{(\ell)}(\bar n)},
\]
where
\[\{a^{(\ell)}(1),\ldots,a^{(\ell)}(\bar n)\},\,\{b^{(\ell)}(1),\ldots,b^{(\ell)}(\bar n)\}\subseteq\{0,1,\ldots,n-1\}\]
and $\bar n> (1-\eps/2)n$. Since each entry of both, $\bar u^{(\ell)}$ and $\bar v^{(\ell)}$ is different than $P_0$ and they are pairwise equal, we conclude that
\[
\mathcal{P}(T^{i(\ell)+a(\iota)}(y))=\mathcal{P}(T^{j(\ell)+b(\iota)}(z))\quad\text{for every }1\le \ell\le p,\,1\le \iota\le \bar n.
\]
Since $\diam P<\eps$ for $P\in\mathcal{P}$ with $P\neq P_0$, we obtain that
\[
d(T^{i(\ell)+a(\iota)}(y),T^{j(\ell)+b(\iota)}(z))<\eps\quad\text{for every }1\le \ell\le p,\,1\le \iota\le \bar n.
\]
Therefore, the association
\[
i(\ell)+a(\iota)\mapsto b(\ell)+b(\iota)\text{ for every }1\le \ell\le p,\,1\le \iota\le \bar n,
\]
gives us a $(k,\eps)$-match $\pi$ with
\begin{gather*}
\mathcal{D}(\pi)=\{i(\ell)+a(\iota):1\le \ell\le p,\,1\le \iota\le \bar n\}\quad\text{and}\quad  \\
\mathcal{R}(\pi)=\{j(\ell)+b(\iota):1\le \ell\le p,\,1\le \iota\le \bar n\}.
\end{gather*}
We easily have $|\mathcal{D}(\pi)|=|\mathcal{R}(\pi)|\ge (k(1-\eps/3)/n)\cdot (n (1-\eps/2))>(1-\eps)k$. It follows that $\rho_{\text{FK}}(y,z)\le \eps$.
Since $\eps$ is arbitrary, we get $\rho_{\text{FK}}(y,z)=0$ as needed.
\end{proof}

Let $(X,\Sigma_X,\mu,T)$ be a MPS and $(Y,S)$ be a TDS. Recall that we say $(Y,S)$ is a \textbf{topological model} for $(X,\Sigma_X,\mu,T)$ if $(Y,S)$ is uniquely ergodic (with the unique invariant measure denoted by $\nu$) and the MPSs $(X,\Sigma_X,\mu,T)$ and $(Y,\bar{\mathcal{B}}_Y,\nu,S)$ are isomorphic. 

\begin{corollary}
	\label{cor}
	A TDS is a topological model for a loosely Kronecker MPS if and only if it is topologically loosely Kronecker.
\end{corollary}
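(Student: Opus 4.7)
The corollary should be almost immediate from Theorem \ref{uniquelyerg} once we note the definition of topological model and the fact that being loosely Kronecker is an isomorphism invariant (since Kakutani equivalence is preserved under isomorphism, and Kronecker MPSs form an isomorphism-closed class).

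For the forward direction, suppose $(Y,S)$ is a topological model for a loosely Kronecker MPS $(X,\Sigma_X,\mu,T)$. By definition of topological model, $(Y,S)$ is uniquely ergodic with a unique $S$-invariant Borel probability measure $\nu$, and the MPS $(Y,\bar{\mathcal{B}}_Y,\nu,S)$ is isomorphic to $(X,\Sigma_X,\mu,T)$. Since loosely Kronecker is an isomorphism-invariant property of MPSs, $(Y,\bar{\mathcal{B}}_Y,\nu,S)$ is itself loosely Kronecker. Applying the nontrivial implication of Theorem \ref{uniquelyerg} to $(Y,S)$ and $\nu$, we conclude that $(Y,S)$ is topologically loosely Kronecker.

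For the converse direction, suppose $(Y,S)$ is topologically loosely Kronecker. By Theorem \ref{uniquelyerg}, $(Y,S)$ is uniquely ergodic with $M^e_1(Y,S)=\{\nu\}$, and the MPS $(Y,\bar{\mathcal{B}}_Y,\nu,S)$ is loosely Kronecker. Then $(Y,S)$ is tautologically a topological model for the loosely Kronecker MPS $(Y,\bar{\mathcal{B}}_Y,\nu,S)$, via the identity isomorphism.

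Since both directions reduce immediately to an application of Theorem \ref{uniquelyerg} (plus the observation that isomorphic MPSs are simultaneously loosely Kronecker, which is built into the definition), there is no genuine obstacle in the proof; the substantial work has already been carried out in establishing Theorem \ref{uniquelyerg}. The only point that one might want to spell out, for readers not familiar with the conventions of the paper, is that being loosely Kronecker depends only on the isomorphism class of the MPS, which is clear from Definition \ref{def:kakutani} together with the fact that any Kronecker MPS is Kakutani equivalent to any other Kronecker MPS (already noted in the remark after that definition).
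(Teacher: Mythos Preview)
Your proof is correct and matches the paper's intent: the corollary is stated without proof precisely because it follows immediately from Theorem~\ref{uniquelyerg} together with the definition of a topological model and the isomorphism-invariance of the loosely Kronecker property, exactly as you spell out.
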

This result is somewhat surprising, since there is no known purely topological characterisation of topological models of Kronecker systems.
 We will now briefly discuss how the isomorphic invariant properties of MPSs translate to topological models.

For several classes of MPSs there exists \emph{a} topological model (or at least a non-uniquely ergodic topological realisation) with a corresponding topological property \cite{halmos1942operator,lindenstrauss99,donoso2015uniformly,gutman2019strictly}; nonetheless, some topological models may exhibit topological properties rather distant from the properties of the initial MPS.
\begin{theorem}[\cite{lehrer1987topological}]
	\label{lehrer}
	Any ergodic aperiodic measure-preserving dynamical system admits a topologically mixing topological model. In particular, this implies there exists a strictly ergodic topologically mixing TDS such that the associated measure-preserving dynamical system is Kronecker.
\end{theorem}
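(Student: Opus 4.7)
The plan is to apply the Jewett--Krieger theorem and then perform a symbolic-dynamics modification that forces topological mixing while preserving the measure-theoretic isomorphism class. First, I would invoke Jewett--Krieger to obtain a strictly ergodic topological model $(Y_0,S_0)$ of $(X,\mu,T)$, which we may realise as a minimal subshift over a finite alphabet $\mathcal{A}$ with unique invariant measure $\nu_0$ and $(Y_0,\nu_0,S_0)\cong (X,\mu,T)$ as MPSs.

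Using the aperiodicity and ergodicity of $T$, I would apply Rokhlin's lemma to build a sequence of Rokhlin towers of heights $h_n\to\infty$ covering all but a set of measure $\eps_n\to 0$, with levels refining a generating sequence of finite measurable partitions. These towers partition $\mu$-a.e.\ orbit into blocks of length roughly $h_n$ synchronised along tower returns; on such blocks one has freedom to alter the coding at positions of vanishing density without disturbing the statistical behaviour. I would exploit this freedom to replace short segments near the top of each height-$h_n$ tower by a ``marker'' block $w_n$ drawn from an enumeration of all finite words appearing in $Y_0$, arranging that every pair $(u,v)$ of such words is embedded in $w_n$ at every prescribed relative distance for all sufficiently large $n$. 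The resulting symbolic system $(Y,S)\subset\mathcal{A}^{\Z}$ is isomorphic to $(Y_0,\nu_0,S_0)$ as an MPS because the altered coordinates form a set of density zero along every generic orbit, so unique ergodicity and the isomorphism class survive the surgery.

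To verify topological mixing, given nonempty open $U,V\subset Y$ I would pick cylinders $[u]\subset U$ and $[v]\subset V$. For every sufficiently large $n$, the enumeration scheme guarantees that the marker block $w_n$ contains $u$ at position $0$ and $v$ at every position $k$ in a long interval, so we can produce a point $y\in Y\cap [u]$ with $S^k y\in [v]$ for every large enough $k$. Combined with the minimality built into the construction and the preserved unique ergodicity, this yields a strictly ergodic topologically mixing model. The ``in particular'' clause is then immediate: apply the first part to any Kronecker MPS (for instance an irrational rotation of the circle), which is ergodic and aperiodic.

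The main obstacle is the delicate balancing act in the insertion step: the marker insertions must be sparse enough along every orbit that neither unique ergodicity nor the isomorphism type is disturbed, yet combinatorially rich enough to witness every pair of cylinders at all sufficiently large time gaps while preserving minimality. Reconciling these competing demands, essentially via a careful Kakutani-tower bookkeeping that distributes the new patterns uniformly along orbits, is the technical core of Lehrer's argument in \cite{lehrer1987topological}.
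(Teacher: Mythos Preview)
The paper does not prove this theorem; it merely cites it from \cite{lehrer1987topological} and uses the statement as a black box (to deduce Corollary~\ref{cor:FK not mean} and to frame the discussion after Corollary~\ref{cor}). So there is no proof in the paper to compare against.

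Your sketch is a plausible outline of the ideas behind Lehrer's construction, but it has real gaps if taken as a standalone argument. The main issue is the claim that inserting markers on a set of density zero automatically preserves unique ergodicity and the isomorphism class. Density zero along generic orbits is not enough: a density-zero perturbation of a uniquely ergodic subshift can easily create new invariant measures (for instance, by allowing the marker positions themselves to carry extra randomness, or by introducing new accumulation points in the orbit closure that support periodic or other measures). One needs the marker positions to be \emph{measurably determined} by the original process and to be combinatorially rigid enough that no new subsystems appear in the orbit closure; this is precisely the delicate part of Lehrer's paper, and it cannot be dispatched by a density argument alone. Similarly, your mixing verification assumes that the constructed point with the desired $u,v$ pattern actually lies in the orbit closure $Y$, which again requires the careful tower bookkeeping you allude to but do not carry out. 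In short, you have correctly identified the architecture and the obstacle, but the proposal is a proof plan rather than a proof; for the purposes of this paper, citing \cite{lehrer1987topological} is the appropriate move.
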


Minimal topologically mixing TDS are always mean sensitive \cite[Theorem 1.5]{garcia2017dynamical}, so there exist topological models of Kronecker systems that are not mean equicontinuous. Actually, mean equicontinuity characterises a proper subclass of uniquely ergodic Kronecker systems: A TDS $(X,T)$ is mean equicontinuous if and only if it is an isomorphic extension of group rotation i.e. the (continuous) maximal equicontinuous factor map provides a measure theoretic isomorphism between the unique invariant measure for $T$ and the unique invariant measure for its maximal equicontinuous factor \cite{lituye,downarowiczglasner,fuhrmann2018structure}.

A TDS is a topological model of a Kronecker system if and only it is uniquely ergodic and $\mu$-mean equicontinuous \cite{weakforms}; but $\mu$-mean equicontinuity is not a purely topological notion as it refers to a $T$-invariant measure $\mu$ which must be known a priori, while the notion of a topological loosely Kronecker system needs no measure.

A consequence of the previous discussion is the following.
\begin{corollary}
	\label{cor:FK not mean}
	There exists minimal FK-continuous TDS that are not almost mean equicontinuous.
\end{corollary}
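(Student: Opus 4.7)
The plan is to assemble the example directly from the discussion immediately preceding the corollary, combining Lehrer's theorem (Theorem \ref{lehrer}) with our Corollary \ref{cor}.

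First, I would start from any non-trivial Kronecker MPS (an irrational rotation of the circle would do). Such a system is ergodic and aperiodic, so Theorem \ref{lehrer} furnishes a strictly ergodic (hence minimal and uniquely ergodic) topologically mixing TDS $(X,T)$ whose unique invariant measure $\mu$ makes $(X,\mu,T)$ isomorphic to the chosen Kronecker MPS. Every Kronecker MPS has zero entropy and is trivially Kakutani equivalent to itself, hence is loosely Kronecker. Thus $(X,T)$ is a uniquely ergodic topological model of a loosely Kronecker MPS, and Corollary \ref{cor} tells us that $(X,T)$ is topologically loosely Kronecker. Since $(X,T)$ is minimal (and so transitive), Proposition \ref{prop1} immediately yields FK-continuity of $(X,T)$.

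To finish, I would invoke the result \cite[Theorem 1.5]{garcia2017dynamical} quoted in the paragraph preceding the corollary: every minimal topologically mixing TDS is mean sensitive. Applied to $(X,T)$, this gives mean sensitivity. On the other hand, the Auslander-Yorke type dichotomy for mean equicontinuity (recalled in Section \ref{sec:AY}) asserts that a minimal TDS cannot be simultaneously almost mean equicontinuous and mean sensitive, since in the minimal setting almost mean equicontinuity forces mean equicontinuity. Consequently $(X,T)$ is not almost mean equicontinuous, producing the desired minimal FK-continuous TDS that fails to be almost mean equicontinuous.

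No step is expected to be a serious obstacle: the argument is a direct assembly of Corollary \ref{cor}, Lehrer's theorem, and a known mean-sensitivity result for minimal topologically mixing TDSs. The only minor verification is that a Kronecker MPS qualifies as loosely Kronecker, which is immediate from the definition (Kakutani equivalence is reflexive). The power of the corollary lies not in its proof but in the contrast it highlights: being topologically loosely Kronecker is substantially weaker than mean equicontinuity, even in the minimal transitive setting.
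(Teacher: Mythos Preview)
Your proposal is correct and follows exactly the approach of the paper: the paper's proof is the single sentence ``An example is provided by any minimal topologically mixing topological model of a Kronecker system,'' and your write-up simply unpacks the ingredients (Lehrer's theorem, Corollary~\ref{cor}/Proposition~\ref{prop1}, and the mean-sensitivity of minimal topologically mixing systems together with the transitive Auslander--Yorke dichotomy for the Besicovitch pseudometric) needed to see why such a model works.
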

\begin{proof}
An example is provided by any minimal topologically mixing topological model of a Kronecker system.
\end{proof}
The next result can be proved directly, but it also easily follows from our previous results.
\begin{corollary}
	\label{cor: TLP invariant}
	Topological loosely Kronecker property is an invariant of topological conjugacy.
\end{corollary}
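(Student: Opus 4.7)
The plan is to leverage Theorem \ref{uniquelyerg}, which identifies the topological loosely Kronecker property with the conjunction of two conditions on the underlying MPS, both of which are visibly invariant under topological conjugacy. Concretely, let $(X,T)$ be topologically loosely Kronecker and let $\phi\colon X\to Y$ be a topological conjugacy onto a TDS $(Y,S)$, so $\phi$ is a homeomorphism and $S\circ\phi=\phi\circ T$. By Theorem \ref{uniquelyerg}, $(X,T)$ is uniquely ergodic with invariant measure $\mu$, and $(X,\mu,T)$ is loosely Kronecker. I will show that $(Y,S)$ satisfies the analogous two conditions and then apply the converse direction of Theorem \ref{uniquelyerg} to conclude.

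First, pushing forward by $\phi$ gives a bijection $\eta\mapsto \phi_*\eta$ between Borel $T$-invariant probability measures on $X$ and Borel $S$-invariant probability measures on $Y$, since $\phi$ is a homeomorphism intertwining $T$ and $S$. Thus unique ergodicity transfers, and $\nu:=\phi_*\mu$ is the unique $S$-invariant Borel probability measure on $Y$. Moreover, $\phi$ is a measurable isomorphism (in fact a homeomorphism) that sends $\mu$ to $\nu$ and conjugates the dynamics, so the MPSs $(X,\bar{\mathcal B}_X,\mu,T)$ and $(Y,\bar{\mathcal B}_Y,\nu,S)$ are isomorphic. Since the loosely Kronecker property is defined through Kakutani equivalence to a Kronecker system and therefore depends only on the MPS up to isomorphism, $(Y,\nu,S)$ is loosely Kronecker. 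Applying Theorem \ref{uniquelyerg} to $(Y,S)$ yields that $(Y,S)$ is topologically loosely Kronecker, as desired.

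For completeness I would note that the direct proof is equally short: the Feldman-Katok pseudometric is a conjugacy invariant in the following sense. The homeomorphism $\phi$ is uniformly continuous on the compact space $X$, so for every $\delta>0$ there exists $\delta'>0$ with $d_X(x,x')<\delta'\Rightarrow d_Y(\phi(x),\phi(x'))<\delta$; because $\phi$ intertwines $T$ and $S$, any $(n,\delta')$-match between $x$ and $x'$ in $X$ is automatically an $(n,\delta)$-match between $\phi(x)$ and $\phi(x')$ in $Y$. Consequently $\rho_{FK}^X(x,x')=0$ implies $\rho_{FK}^Y(\phi(x),\phi(x'))=0$, and since $\phi$ is surjective the property $\rho_{FK}(y,y')=0$ for all $y,y'\in Y$ is inherited. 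I expect no real obstacle here; the only small point to check is that the match condition transports cleanly through the conjugacy, which is immediate from $d_Y(S^i\phi(x),S^{\pi(i)}\phi(x'))=d_Y(\phi(T^ix),\phi(T^{\pi(i)}x'))$ together with uniform continuity of $\phi$.
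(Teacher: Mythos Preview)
Your proposal is correct and follows essentially the same approach as the paper: reduce via Theorem \ref{uniquelyerg} to the (well-known) conjugacy invariance of unique ergodicity and of the loosely Kronecker property of the associated MPS. The paper remarks that a direct proof is also possible, and your second paragraph supplies exactly such an argument by transporting $(n,\delta)$-matches through the uniformly continuous conjugacy.
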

\begin{proof}
It is well known that unique ergodicity and the loosely Kronecker property are invariant of topological conjugacy. To finish the proof we apply Theorem \ref{uniquelyerg}.
\end{proof}

\section{Examples and applications}\label{sec:examples}
\subsection{Examples of topologically loosely Kronecker systems} \label{subsec:examples}

The following MPSs are loosely Kronecker: finite rank MPSs \cite{ornsteinrudolphweiss82}, horocycle flows \cite{ratner78}, adic transformations with their invariant measures described in \cite{janvresse04,mela2006}, (measurable) distal MPSs (this is a consequence of the fact that the class of loosely
Kronecker MPSs is closed under factors, inverse limits and compact
extensions (\cite{bronstein1979, katok77,ornsteinrudolphweiss82}), and other examples from \cite{feldman76,ratner1979,kanigowski2018,hoffman1999loosely}.

Hence, by Theorem~\ref{uniquelyerg}, every topological model of any of the systems above must be topologically loosely Kronecker. Some of these models are well known TDSs, they include: Morse subshift, Chacon subshift, uniquely ergodic symbolic codings of interval exchange transformations (note that in some sense most of the symbolic codings of the interval exchange transformations are uniquely ergodic \cite{masur1982interval,veech1982gauss}), and minimal nilsystems.



We can also create more examples of topological loosely Kronecker systems using induced flows and regular extensions.

\begin{proposition}
	(\cite[Remark 3.2]{petersen73})
	\label{petersen}
	Let $(X,T)$ be a strictly ergodic TDS and $A\subset X$ be a non-empty clopen set (closed and open). The return map on $A$ is a strictly ergodic TDS.
\end{proposition}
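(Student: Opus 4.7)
The plan is to verify the three conditions needed: first that the return map $T_A$ is a well-defined continuous self-map of $A$, second that $T_A$ is minimal, and third that $T_A$ is uniquely ergodic.

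For well-definedness and continuity, I would exploit that $A$ is clopen. Since $(X,T)$ is minimal, the forward orbit of every $x\in A$ is dense in $X$; in particular it enters $A$ at some positive time, so the first return time $r_A(x)=\min\{n\ge 1 : T^n x\in A\}$ is finite for every $x\in A$. To see that $r_A$ is locally constant, fix $x\in A$ with $r_A(x)=n$. Then $T^n x\in A$ (open) while $T^j x\in X\setminus A$ (also open) for $1\le j<n$. By continuity of $T^1,\dots,T^n$, a sufficiently small neighbourhood $U$ of $x$ inside $A$ satisfies $T^n y\in A$ and $T^j y\notin A$ for $1\le j<n$ and all $y\in U$, so $r_A\equiv n$ on $U$. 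Compactness of $A$ then forces $r_A$ to take only finitely many values, and $T_A(x)=T^{r_A(x)}(x)$ is continuous on $A$.

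Minimality of $T_A$ is essentially immediate. Let $U\subset A$ be a nonempty relatively open set. Since $A$ is open in $X$, the set $U$ is also open in $X$, so minimality of $T$ gives for every $x\in A$ infinitely many $n\ge 1$ with $T^n x\in U\subseteq A$. Each such $n$ is of the form $r_A(x)+r_A(T_A x)+\dots+r_A(T_A^{k-1}x)$ for some $k\ge 1$, i.e.\ it corresponds to a visit of the $T_A$-orbit of $x$ to $U$. Hence the $T_A$-orbit of every $x\in A$ is dense in $A$.

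Unique ergodicity should follow from a standard Kakutani-tower correspondence, which I expect to be the main technical step. Let $\mu$ denote the unique $T$-invariant Borel probability measure on $X$, and let $\nu$ be any $T_A$-invariant Borel probability measure on $A$. Define a Borel measure $\tilde\nu$ on $X$ by
\[
\tilde\nu(B)=\int_A\sum_{j=0}^{r_A(x)-1}\chi_B(T^j x)\,d\nu(x),\qquad B\in\mathcal B_X.
\]
A short computation (using $T_A$-invariance of $\nu$ and the definition of $r_A$) shows $\tilde\nu$ is $T$-invariant; because $r_A$ is bounded on $A$, $\tilde\nu(X)=\int_A r_A\,d\nu<\infty$, so $\hat\nu:=\tilde\nu/\tilde\nu(X)$ is a $T$-invariant probability measure. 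Unique ergodicity of $(X,T)$ forces $\hat\nu=\mu$. On the other hand, for any Borel $B\subseteq A$ only the $j=0$ term in the sum contributes (since $T^j x\notin A$ for $0<j<r_A(x)$), so $\tilde\nu(B)=\nu(B)$; in particular $\tilde\nu(A)=1$ and hence $\tilde\nu(X)=1/\mu(A)$. Therefore $\nu(B)=\mu(B)/\mu(A)$ for every Borel $B\subseteq A$, i.e.\ $\nu$ coincides with the normalised restriction $\mu_A$. This proves that $\mu_A$ is the unique $T_A$-invariant Borel probability measure on $A$, so $(A,T_A)$ is strictly ergodic. The main obstacle is bookkeeping the Kakutani tower identity and verifying $T$-invariance of $\tilde\nu$; everything else reduces to the fact that a clopen set behaves well under both the topology and a continuous dynamics.
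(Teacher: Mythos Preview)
Your proof is correct and follows the standard argument for this classical fact. Note, however, that the paper does not actually provide its own proof of this proposition: it simply cites \cite[Remark 3.2]{petersen73} and uses the result as a black box. So there is no ``paper's proof'' to compare against.

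A couple of minor points you could make explicit: for the unique ergodicity step you implicitly use that $\mu(A)>0$, which holds because $A$ is open and nonempty and $\mu$ has full support (strict ergodicity implies minimality, hence the unique invariant measure is fully supported). Also, the verification that $\tilde\nu$ is $T$-invariant is the one-line telescoping computation
\[
\tilde\nu(T^{-1}B)-\tilde\nu(B)=\int_A\bigl[\chi_B(T_A x)-\chi_B(x)\bigr]\,d\nu(x)=0
\]
by $T_A$-invariance of $\nu$; you might want to write this out rather than leave it as ``a short computation''. Otherwise the argument is complete.
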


\begin{corollary}
	Let $(X,T)$ be a minimal topologically loosely Kronecker TDS and $A\subset X$ be a non-empty clopen set. The return map on $A$ is a minimal topologically loosely Kronecker TDS.
\end{corollary}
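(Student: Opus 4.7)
The plan is to deduce the result directly from the characterisation of topologically loosely Kronecker systems given in Theorem \ref{uniquelyerg} together with Proposition \ref{unierg} and Proposition \ref{petersen}. First I would note that since $(X,T)$ is topologically loosely Kronecker, Proposition \ref{unierg} gives that $(X,T)$ is uniquely ergodic, so the minimality hypothesis upgrades $(X,T)$ to a strictly ergodic TDS. Let $\mu$ denote its unique $T$-invariant Borel probability measure. Then Proposition \ref{petersen} applies to the clopen set $A$ and yields that the return map $T_A\colon A\to A$ is a strictly ergodic TDS; in particular $(A,T_A)$ is minimal, and the unique $T_A$-invariant Borel probability measure is $\mu_A=\mu|_A/\mu(A)$.

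Next I would verify that the MPS $(A,\mu_A,T_A)$ is loosely Kronecker. By Theorem \ref{uniquelyerg} applied to $(X,T)$, the MPS $(X,\mu,T)$ is loosely Kronecker, i.e.\ it is Kakutani equivalent to a Kronecker system. Since $A$ has positive $\mu$-measure, Definition \ref{def:kakutani} shows that $(A,\mu_A,T_A)$ is Kakutani equivalent to $(X,\mu,T)$ via the trivial choice $A'=A$. As Kakutani equivalence is transitive, the induced system $(A,\mu_A,T_A)$ is itself Kakutani equivalent to a Kronecker system, hence loosely Kronecker. (Zero entropy is automatic: Abramov's formula gives $h(T_A)=h(T)/\mu(A)=0$, so the system remains in the zero-entropy loosely Bernoulli class, as required by the equivalence of the notions recalled after Definition \ref{def:kakutani}.)

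Finally, I would close the loop by invoking Theorem \ref{uniquelyerg} once more, this time in the opposite direction, applied to $(A,T_A)$: it is uniquely ergodic with $M^e_1(A,T_A)=\{\mu_A\}$, and we have just shown $(A,\mu_A,T_A)$ is loosely Kronecker, so $(A,T_A)$ is topologically loosely Kronecker. Combined with the minimality already obtained from Proposition \ref{petersen}, this yields the corollary. There is no real obstacle here beyond correctly chaining these results; the only point that requires a moment of attention is confirming that the induced measure $\mu_A$ is indeed the unique invariant measure for $T_A$, which is part of the strict ergodicity conclusion in Proposition \ref{petersen}, and that Kakutani equivalence preserves the loosely Kronecker property, which is built into its definition.
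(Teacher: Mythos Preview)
Your proof is correct and follows essentially the same route as the paper: use Proposition \ref{petersen} to get strict ergodicity of the induced system, use Theorem \ref{uniquelyerg} to pass from topologically loosely Kronecker to the measure-theoretic loosely Kronecker property, observe that inducing on a positive-measure set preserves the loosely Kronecker property (you spell this out via Kakutani equivalence and transitivity, whereas the paper simply states it), and then apply Theorem \ref{uniquelyerg} in the reverse direction. The only cosmetic point is that your parenthetical about Abramov's formula is unnecessary, since in this paper loosely Kronecker is \emph{defined} as being Kakutani equivalent to a Kronecker system, so zero entropy need not be checked separately.
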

\begin{proof}
	The first return map of a loosely Kronecker system is loosely Kronecker. Simply apply Proposition \ref{petersen} and Theorem \ref{uniquelyerg}.
\end{proof}

For example, if $(X,T)$ is a Sturmian subshift and $A$ a non-trivial cylinder set then the return map will be a topologically loosely Kronecker (and not necessarily mean equicontinuous) TDS.

Another class of examples studied in \cite{petersen73} are the \emph{primitives} of TDS with respect to a function (definied in \cite[Section 2]{petersen73}). By arguments given on that paper (Remark 3.2 and Section 3), it is easy to see that the primitives of topologically loosely Kronecker TDS are also strictly ergodic and Kakutani equivalent to an topologically loosely Kronecker TDS; hence, also topologically loosely Kronecker. An example of one of these systems is the Sturmian subshift with "doubled" 1s.
\begin{definition}
We say $(X,T)$ is a \textbf{regular extension} of $(Y,\nu,S)$, if there
exists a continuous factor $\phi:(X,T)\rightarrow(Y,S)$ such that $\nu(\{y\in Y:|\phi^{-1}(y)|=1\})$.
\end{definition}

One way to generate a regular extension of uniquely ergodic TDS is with a symbolic extension (or codings) generated by a measurable partition with zero measure boundaries.

The following lemma is known (for example see the corollary on page 383 in \cite{petersen73}).

\begin{lemma}
	\label{regext}
If $(Y,S)$ is uniquely ergodic TDS and $(X,T)$ is a regular extension of $(Y,S)$ then $(X,T)$ is uniquely ergodic TDS.
Furthermore, if we denote the unique invariant measures of, respectively, $(X,T)$ and $(Y,S)$ by $\mu$ and $\nu$, then the MPSs $(X,\mu,T)$ and $(Y,\nu,S)$ are isomorphic.
\end{lemma}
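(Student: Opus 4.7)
My plan is to derive both claims from a single observation: when $\mu$ is any $T$-invariant Borel probability measure on $X$, the push-forward $\phi_*\mu$ is $S$-invariant on $Y$, and hence $\phi_*\mu=\nu$ by unique ergodicity of $(Y,S)$. I would then exploit the regularity assumption to transfer $\nu$ back across $\phi$ in a canonical way. Writing $Y_0=\{y\in Y:|\phi^{-1}(y)|=1\}$ (a universally measurable set, which we may as well replace by a Borel subset of full $\nu$-measure, still denoted $Y_0$), we have $\nu(Y_0)=1$, and setting $X_0=\phi^{-1}(Y_0)$ we obtain $\mu(X_0)=(\phi_*\mu)(Y_0)=\nu(Y_0)=1$.

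The main work is to check that $\phi|_{X_0}\colon X_0\to Y_0$ is a bimeasurable bijection. It is a bijection by the very definition of $Y_0$ and continuous by assumption. By the Lusin--Souslin theorem (the injective continuous image of a Borel set in a Polish space is Borel, and the inverse map is Borel measurable), $\phi|_{X_0}$ is a Borel isomorphism between $X_0$ and $Y_0$. Since $\phi^{-1}(Y_0)=X_0$ by definition, for any Borel $B\subset X$ we have $\phi^{-1}(\phi(B\cap X_0))=B\cap X_0$, and therefore
\[
\mu(B)=\mu(B\cap X_0)=(\phi_*\mu)(\phi(B\cap X_0))=\nu(\phi(B\cap X_0)).
\]
The right-hand side depends only on $\nu$ and $\phi$, not on the choice of $\mu$, so any two $T$-invariant Borel probability measures on $X$ must agree on every Borel set. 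This yields the unique ergodicity of $(X,T)$.

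For the isomorphism of MPSs, I would pass from $Y_0$ to the forward $S$-invariant full-measure set $Y_1:=\bigcap_{n\geq 0}S^{-n}(Y_0)$ and let $X_1:=\phi^{-1}(Y_1)\subset X_0$. Then $\nu(Y_1)=\mu(X_1)=1$, the set $X_1$ is forward $T$-invariant, and $\phi|_{X_1}\colon X_1\to Y_1$ is a Borel isomorphism intertwining $T|_{X_1}$ with $S|_{Y_1}$ that transports $\mu|_{X_1}$ to $\nu|_{Y_1}$. This is exactly the data of an isomorphism of MPSs $(X,\mu,T)$ and $(Y,\nu,S)$.

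The only nontrivial input is the Lusin--Souslin theorem, which simultaneously ensures that the image $\phi(B\cap X_0)$ is Borel and that the inverse of $\phi|_{X_0}$ is Borel measurable; without it, the chain of equalities above would be compromised by measurability concerns. Everything else reduces to routine bookkeeping with full-measure forward-invariant sets, so I expect the write-up to be short.
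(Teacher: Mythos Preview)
Your argument is correct. The paper does not actually prove this lemma; it simply records that the result is known and points to Petersen's 1973 paper, so there is no proof in the text to compare against. Your route---push forward any $T$-invariant $\mu$ to get $\phi_*\mu=\nu$, restrict to the set of one-point fibres, and invoke Lusin--Souslin to make $\phi$ a Borel isomorphism on a full-measure set---is the standard way to establish both unique ergodicity and the measure-theoretic isomorphism. One minor simplification you could make: since $\phi$ is a continuous surjection between compact metric spaces, the map $y\mapsto\diam\phi^{-1}(y)$ is upper semicontinuous, so $Y_0=\{y:|\phi^{-1}(y)|=1\}$ is already a $G_\delta$ (hence Borel) set, and the detour through universal measurability is unnecessary.
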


\begin{corollary}
Every regular extension of a topologically loosely Kronecker system must
also be topologically loosely Kronecker.
\end{corollary}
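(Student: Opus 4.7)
The plan is to chain together the two results that immediately precede the statement. Let $(Y,S)$ be topologically loosely Kronecker and let $(X,T)$ be a regular extension of $(Y,S)$ via a continuous factor map $\phi\colon X\to Y$. First, invoking Theorem \ref{uniquelyerg} in the forward direction, $(Y,S)$ is uniquely ergodic; let $\nu$ denote its unique invariant Borel probability measure, and note that the MPS $(Y,\nu,S)$ is loosely Kronecker.

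Next, I would apply Lemma \ref{regext} to the regular extension $\phi\colon X\to Y$. This yields that $(X,T)$ is itself uniquely ergodic; writing $\mu$ for its unique invariant measure, the lemma further tells us that the MPSs $(X,\mu,T)$ and $(Y,\nu,S)$ are measure-theoretically isomorphic. Since being loosely Kronecker is defined through Kakutani equivalence (Definition \ref{def:kakutani}) and therefore is manifestly an invariant of measure-theoretic isomorphism, the MPS $(X,\mu,T)$ inherits the loosely Kronecker property from $(Y,\nu,S)$.

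Finally, I would close the argument by applying Theorem \ref{uniquelyerg} in the reverse direction: since $(X,T)$ is uniquely ergodic with unique invariant measure $\mu$ and $(X,\mu,T)$ is loosely Kronecker, we conclude that $(X,T)$ is topologically loosely Kronecker. There is no real obstacle here; the entire content lies in Lemma \ref{regext} (which transports unique ergodicity and identifies the measure-theoretic factor as an isomorphism on a full-measure set) and in the fact, built into Theorem \ref{uniquelyerg}, that topological loose Kroneckerness is equivalent to the purely topological condition that $\rho_{FK}$ vanishes on $X\times X$. The only thing worth double-checking is that the isomorphism invariance of the loosely Kronecker property is indeed immediate from Definition \ref{def:kakutani}, which it is, since an isomorphism of MPSs induces an isomorphism of the corresponding induced systems on any full-measure set.
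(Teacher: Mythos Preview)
Your proof is correct and follows essentially the same approach as the paper: invoke Lemma \ref{regext} to transport unique ergodicity to the extension and obtain an isomorphism of the underlying MPSs, note that isomorphism is a special case of Kakutani equivalence so the loosely Kronecker property is preserved, and then apply Theorem \ref{uniquelyerg} to conclude. The paper's proof is a terse two-line version of exactly this argument.
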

\begin{proof}
	Regular extensions yield an isomorphism and hence a Kakutani equivalence. Use Lemma \ref{regext} and Theorem \ref{uniquelyerg} to obtain the result.
\end{proof}

Minimal regular extensions of equicontinuous TDSs were characterised using a stronger form of mean equicontinuity in \cite{garciajagerye}.
\subsection{FK-sensitivity corollaries}
In this section we gather some corollaries of Auslander-Yorke type dichotomies proved in section \ref{sec:AY}.


\begin{corollary} There exists a MPS that admits no FK-sensitive topological model.
\end{corollary}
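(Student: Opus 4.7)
The plan is to exhibit a loosely Kronecker MPS and use Corollary \ref{cor} to show that every one of its topological models is topologically loosely Kronecker, hence trivially fails to be FK-sensitive.

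Concretely, I would take any loosely Kronecker MPS, for instance an irrational rotation of the circle with Haar measure (which is Kronecker, hence loosely Kronecker). Let $(Y,S)$ be any topological model of this MPS. By Corollary \ref{cor}, $(Y,S)$ must be topologically loosely Kronecker, meaning $\rho_{FK}(x,y)=0$ for all $x,y\in Y$. In particular, for every $\varepsilon>0$ and every nonempty open $U\subset Y$, any pair $x,y\in U$ satisfies $\rho_{FK}(x,y)=0\le \varepsilon$, which directly negates the definition of FK-sensitivity. Thus no topological model of the chosen MPS is FK-sensitive.

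The main ``obstacle'' is really just invoking the correct direction of Corollary \ref{cor}, which in turn rests on Theorem \ref{uniquelyerg}: the nontrivial implication is that every uniquely ergodic TDS whose unique invariant measure is loosely Kronecker must satisfy $\rho_{FK}\equiv 0$, and this has already been established. Once that is in hand, the corollary is immediate, with no additional calculation required.
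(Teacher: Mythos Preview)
Your proof is correct and takes essentially the same approach as the paper: pick a loosely Kronecker MPS, use Theorem \ref{uniquelyerg} (via Corollary \ref{cor}) to conclude that any topological model is topologically loosely Kronecker, and observe that $\rho_{FK}\equiv 0$ trivially precludes FK-sensitivity. The paper's proof is slightly terser and does not name a specific MPS, but the argument is identical.
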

\begin{proof}
	If a MPS is loosely Kronecker, then any topological model for it must be topologically loosely Kronecker (Theorem \ref{uniquelyerg}). Thus it cannot be FK-sensitive.
\end{proof}
Compare the previous result with Theorem \ref{lehrer} (and the discussion after it). This indicates than FK-sensitivity is much stronger than mean sensitivity.

Possibly the most interesting corollary of this subsection is the following.

\begin{corollary}
	\label{cor:min-pos-ent}
	Every minimal TDS with positive topological entropy is FK-sensitive.
\end{corollary}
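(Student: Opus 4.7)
The plan is to combine two results already established in the paper: the Auslander–Yorke type dichotomy for minimal TDSs with respect to $\rho_{FK}$ (Theorem \ref{thm:duality}) and the vanishing of topological entropy for FK-continuous systems (Corollary \ref{zeroent}). The argument is essentially a one-line contrapositive.

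More precisely, let $(X,T)$ be a minimal TDS with $\htop(T)>0$. By Corollary \ref{zeroent}, any FK-continuous TDS has zero topological entropy, so $(X,T)$ cannot be FK-continuous. Since $(X,T)$ is minimal, Theorem \ref{thm:duality} provides the dichotomy that $(X,T)$ is either FK-continuous or FK-sensitive; the first alternative has just been excluded, hence $(X,T)$ must be FK-sensitive.

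There is no real obstacle here, since the work has been done in establishing (i) the dichotomy for minimal systems (which relies on Lemma \ref{invariant} and the invariance of the sets $\EqFKe$), and (ii) the entropy bound, which follows because FK-continuity of the whole space forces the system to be uniquely ergodic with a loosely Kronecker unique invariant measure (via Proposition \ref{prop1} and Theorem \ref{uniquelyerg}), and loosely Kronecker MPSs have zero measure-theoretic entropy; unique ergodicity then equates topological and measure-theoretic entropy via the variational principle. The only thing worth emphasising in the write-up is that minimality is used in two places: to apply the full (not only transitive) dichotomy of Theorem \ref{thm:duality}, giving a clean either/or, and implicitly in Corollary \ref{zeroent} via the uniquely ergodic structure that forces the entropy to vanish.
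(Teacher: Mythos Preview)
Your proof is correct and follows essentially the same route as the paper: combine the minimal-case dichotomy of Theorem \ref{thm:duality} with Corollary \ref{zeroent}. One small expository correction: minimality is used only once, to invoke the full dichotomy; Corollary \ref{zeroent} applies to \emph{any} FK-continuous TDS (FK-continuity implies $\mu$-FK-continuity for every invariant $\mu$, so each ergodic measure is loosely Kronecker and has zero entropy, and the variational principle finishes), so your second claimed use of minimality is unnecessary.
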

\begin{proof}
	If a minimal TDS is not FK-sensitive, then it must be FK-continuous by Theorem \ref{thm:duality}, and hence topologically loosely Kronecker  by Proposition \ref{prop1}.  Now, Corollary \ref{zeroent} concludes the proof.
\end{proof}
To our knowledge, FK-sensitivity is the strongest form of sensitivity that is implied by minimality and positive topological entropy.
\begin{corollary}
	Every minimal topological model for a positive entropy ergodic MPS is FK-sensitive.
\end{corollary}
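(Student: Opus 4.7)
The plan is to derive this as an immediate consequence of Corollary \ref{cor:min-pos-ent}, together with the variational principle for topological entropy. The key observation is that any topological model of a positive entropy MPS must itself carry positive topological entropy, after which minimality does the rest.

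First I would unpack the hypothesis: if $(X,T)$ is a topological model of an ergodic MPS $(Y,\Sigma_Y,\nu,S)$ with $h_\nu(S)>0$, then by definition $(X,T)$ is uniquely ergodic with some unique invariant measure $\mu$, and the MPSs $(X,\bar{\mathcal{B}}_X,\mu,T)$ and $(Y,\Sigma_Y,\nu,S)$ are isomorphic. Since Kolmogorov-Sinai entropy is an isomorphism invariant, this forces $h_\mu(T)=h_\nu(S)>0$. The variational principle then yields
\[
h_{\text{top}}(X,T)\;\geq\; h_\mu(T)\;>\;0.
\]

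At this point $(X,T)$ is minimal (by assumption) with positive topological entropy, so Corollary \ref{cor:min-pos-ent} applies directly and gives FK-sensitivity. I do not expect a genuine obstacle, since the two non-trivial ingredients (the Auslander-Yorke-type dichotomy of Theorem \ref{thm:duality} and the zero entropy conclusion of Corollary \ref{zeroent}) have already been assembled in Corollary \ref{cor:min-pos-ent}. The present statement is essentially a reformulation of that corollary emphasising the model-theoretic viewpoint, with the variational principle being the sole bridge from the measure-theoretic entropy hypothesis to the topological entropy hypothesis required by Corollary \ref{cor:min-pos-ent}.
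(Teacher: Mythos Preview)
Your proof is correct and follows essentially the same approach as the paper. The only cosmetic difference is that you invoke Corollary~\ref{cor:min-pos-ent} directly (and make the variational-principle step explicit), whereas the paper's proof re-runs the contrapositive argument from Theorem~\ref{thm:duality} and Corollary~\ref{zeroent} without citing Corollary~\ref{cor:min-pos-ent}; logically the two arguments coincide.
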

\begin{proof}
	Assume the model is not FK-sensitive. Then, using Theorem \ref{thm:duality}, it must be FK-continuous. A contradiction arises using Corollary \ref{zeroent}.
\end{proof}


\section{Acknowledgements}
The authors would like to thank  Rafael Alcaraz Barrera, Martha {\L}{\k{a}}cka, Karl Petersen, and Ay\c{s}e \c{S}ahin  for interesting conversations. Parts of this work were prepared during visits of one of us to the other one institution. The warm hospitality of JU and UASLP is gratefully acknowledged. Both authors would like to thank Avery, Jasmine and Ronnie Pavlov for creating an opportunity for us to work on the paper while visiting Denver. We would also like to thank Xiangdong Ye for comments on a preliminary version of this paper.
The research of DK was supported by the National Science Centre, Poland, grant no. 2018/29/B/ST1/01340. FGR was supported by the SEP/CONACyT Ciencia Básica project 287764.
\appendix
\section{Appendix}\label{sec:app}
In this section we provide some proofs for completeness.
We remind the reader that the FK-continuity points are denoted by $\EqFK$ and
	\[
\EqFKe=\left\{  x\in X:\exists\delta>0\text{ }\forall y,z\in
	B_{\delta}(x),\text{ }\rho_{FK}(y,z)\leq\varepsilon\right\}  .
	\]

\begin{proof}[Proof of Lemma \ref{invariant}]
	Let $\varepsilon>0$ and $x\in T^{-1}\EqFKe$.  
Since $Tx\in \EqFKe$ we can find $\eta>0$ such that if $y,z\in B_\eta(Tx)$ 
then $\rho^{FK}(y,z)\leq\varepsilon$. 
By continuity of $T$ there also exists $\delta>0$ such that for every $y'\in X$
if	$d(x,y')\leq\delta$ then $d(Tx,Ty')\leq\eta$. Therefore for $y',z'\in B_{\delta}(x)$ we have $Ty',Tz'\in B_{\eta}(Tx)$, so $\rho_{FK}(y',z')=\rho_{FK}(Ty',Tz')\leq\varepsilon$. We conclude that $x\in \EqFKe$, 
and so $\EqFKe$ is inversely invariant. To see that $\EqFKe$ is open, we fix $x\in \EqFKe$
and pick $\delta>0$ from the defining property of $\EqFKe$. Now it is easy to see that if $x'\in B_{\delta/2}(x)$ and $y,z\in B_{\delta/2}(x')$
then $y,z\in B_{\delta}(x)$, so $\rho^{FK}(y,z)<\varepsilon$ and $\EqFKe$ is open.
We finish the proof by invoking Remark \ref{rem:EqFK}.
	\end{proof}

\begin{proof}
	[Proof of Theorem \ref{thm:duality}] Assume that $(X,T)$ is a transitive TDS. It is enough to show that if $(X,T)$ is not FK-sensitive, then $\EqFK$ is nonempty. In fact, we will show that $\EqFK$ must be residual. It will then follow from Lemma \ref{invariant} that every point $x\in X$ with dense orbit is also in $\EqFK$. Since in a minimal TDS every point has dense orbit we will get $X=\EqFK$.
Therefore we assume from now on that $(X,T)$ is not FK-sensitive, so $\EqFKe$ is nonempty for every $\eps>0$. In a transitive system every nonempty, open and backward invariant set must be dense, so we conclude that $\EqFKn$ is open and dense in $X$ for $n=1,2,\ldots$. Invoking Remark \ref{rem:EqFK} and the Baire category theorem we see that $\EqFK$ is residual in $X$.
\end{proof}

The following lemma is simlar to \cite[Lemma 25]{weakforms}, but it gives more information.
\begin{lemma}
	\label{ecu}Let $(X,\mu,T)$ be an ergodic system and $\eps>0$. The function
	\[
	x\mapsto \mu\left(B_\eps^{FK}(x)\right)
	\]
is $\mu$-almost everywhere constant and its value is either $0$ or $1$. Furthermore, it
	is $\mu$-almost
	everywhere equal to $\mu\times\mu\left\{  (x,y)\in X\times X:\rho_{FK}%
	(x,y)\leq\varepsilon\right\}$.
\end{lemma}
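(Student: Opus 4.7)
The plan is to exploit the $T$-orbit invariance of $\rho_{FK}$ (Lemma \ref{lem:fk-prop}(\ref{fk-prop-iii})), which, combined with the symmetry of a pseudometric, makes $\rho_{FK}$ invariant in \emph{both} coordinates. Concretely, for all $x,y\in X$ and all $n\ge 0$ we have $\rho_{FK}(T^nx,y)=\rho_{FK}(x,y)$, and by symmetry also $\rho_{FK}(x,T^ny)=\rho_{FK}(T^ny,x)=\rho_{FK}(y,x)=\rho_{FK}(x,y)$. From this two facts follow immediately: (a) for every $x$, the set $B_\eps^{FK}(x)$ is $T$-invariant, because $y\in B_\eps^{FK}(x)$ iff $Ty\in B_\eps^{FK}(x)$; and (b) the map $x\mapsto B_\eps^{FK}(x)$ is constant along $T$-orbits, since $B_\eps^{FK}(Tx)=\{y:\rho_{FK}(Tx,y)\le\eps\}=\{y:\rho_{FK}(x,y)\le\eps\}=B_\eps^{FK}(x)$.

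Using (a) together with ergodicity of $\mu$, I would conclude that $\mu(B_\eps^{FK}(x))\in\{0,1\}$ for \emph{every} $x\in X$ (not just a.e.), since $B_\eps^{FK}(x)$ is a Borel $T$-invariant set; here we invoke the Borel measurability of $\rho_{FK}$ noted in the paragraph preceding Theorem \ref{strongsensitive}. Using (b), the real-valued function $\phi(x):=\mu(B_\eps^{FK}(x))$ satisfies $\phi\circ T=\phi$ pointwise. Once I verify that $\phi$ is Borel measurable (which again follows from measurability of $\rho_{FK}$ together with Fubini/Tonelli), ergodicity forces $\phi$ to be $\mu$-a.e. equal to some constant $c$. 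Combining with the previous paragraph, $c\in\{0,1\}$, giving the first assertion.

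For the second assertion I would apply Tonelli's theorem to the nonnegative Borel function $\mathbf 1_{\{\rho_{FK}\le\eps\}}$ on $X\times X$:
\[
(\mu\times\mu)\bigl(\{(x,y):\rho_{FK}(x,y)\le\eps\}\bigr)=\int_X\mu\bigl(B_\eps^{FK}(x)\bigr)\,d\mu(x)=\int_X\phi(x)\,d\mu(x)=c,
\]
since $\phi=c$ $\mu$-a.e. Hence the product-measure value coincides with the almost-sure value of $\phi$.

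The only non-routine point is checking that the orbit invariance of $\rho_{FK}$ really gives invariance in the second coordinate as well; this is what makes the argument work and is specific to the Feldman-Katok pseudometric (and would fail for $\rho_B$, as noted in the remark after Lemma \ref{lem:fk-prop}). Measurability issues for $\phi$ and the application of Tonelli are standard once $\rho_{FK}$ is known to be Borel, so I do not anticipate any substantive obstacle beyond citing those facts.
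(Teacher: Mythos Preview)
Your proof is correct and follows essentially the same argument as the paper's own proof: both use Lemma~\ref{lem:fk-prop}\eqref{fk-prop-iii} (combined with symmetry) to get that $B_\eps^{FK}(x)$ is $T$-invariant in $y$ and that $x\mapsto\mu(B_\eps^{FK}(x))$ is $T$-invariant in $x$, then apply ergodicity twice and finish with Fubini/Tonelli. Your write-up is slightly more explicit about why invariance holds in both coordinates and about the measurability checks, but the structure is identical.
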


\begin{proof} Recall that $B_\eps^{FK}(x)=\left\{y:\rho_{FK}(x,y)\leq\varepsilon\right\}$ and note that $B_\eps^{FK}(x)$ is an invariant set by Lemma \ref{lem:fk-prop}\eqref{fk-prop-iii}. Therefore for each $x\in X$ and $\eps>0$ it holds that $\mu(B_\eps^{FK}(x))$ is either $0$ or $1$.

By the same lemma the mapping $x\mapsto \mu\left(B_\eps^{FK}(x)\right)$ is also $T$-invariant, so ergodicity implies that $\mu\left(B_\eps^{FK}(x)\right)$ is constant for $\mu$-a.e. $x\in X$. Using Fubini's Theorem we obtain that
	\begin{multline*}
	\mu\times\mu\left\{  (x,y)\in X\times X:\rho_{FK}(x,y)\leq\varepsilon\right\}
	=\\=\int_{X}\int_{X}1_{\left\{  (x,y):\rho_{FK}(x,y)\leq\varepsilon
		\right\}  }d\mu(y)d\mu(x)
	 =\int_{X}\mu\left(B^{FK}_\eps(x)\right)\, 
	d\mu(x).
	\end{multline*}
Therefore $\mu\left(B^{FK}_\eps(x)\right)$ equals $\mu\times\mu\left\{  (x,y)\in X\times X:\rho_{FK}(x,y)(x,y)\leq\varepsilon\right\}$ for $\mu$-a.e. $x\in X$ and the latter value is either $0$ or $1$. 	
\end{proof}


\begin{proof}
	[Proof of Theorem \ref{strongsensitive}] [\eqref{ss:i}$\Rightarrow$\eqref{ss:iii}] Suppose $(X,T)$ is a $\mu$-FK-sensitive TDS and $\eps$ is its $\mu$-FK-sensitivity
	constant. If \eqref{ss:iii} does not hold, then there
	exists $x\in X$ such that $\mu(B_{\varepsilon/2}^{FK}(x))>0$. For $y,z\in B_{\varepsilon/2}^{FK}(x)$,
	we have that $\rho_{FK}(y,z)\leq\varepsilon$. This contradicts the assumption that $(X,T)$ is $\mu$-FK sensitive.

\noindent [\eqref{ss:iii}$\Rightarrow$\eqref{ss:ii}]  Let $\eps$ be as in Theorem \ref{strongsensitive}\eqref{ss:iii}. Using Lemma \ref{ecu} we obtain that
\[\mu\times\mu\left(\left\{  (x,y)\in X\times X:\rho_{FK}(x,y)\leq\varepsilon\right\}\right)  =0,
\]
which means that $\eps$ is a $\mu$-FK-expansivity constant for $(X,T)$. 	

\noindent [\eqref{ss:ii}$\Rightarrow$\eqref{ss:i}] Let $\eps_0=\varepsilon'_0$, where $\varepsilon'_0$ is a $\mu$-FK-expansivity constant for $(X,T)$. Fix $A\in\Sigma^{+}$ and note that $A\times A\in(\Sigma\times\Sigma)^{+}$. By $\mu$-FK-expansivity
	we can find $(x,y)\in A\times A$ such that $\rho_{FK}(x,y)>\varepsilon_0$, which shows that $\eps_0$ is a $\mu$-FK-sensitivity constant for $(X,T)$.

\noindent [\eqref{ss:iii}$\Leftrightarrow$\eqref{ss:iv}] It follows immediately from 	Lemma \ref{ecu} and Proposition \ref{prop2}.
\end{proof}
\bibliographystyle{alpha}
\bibliography{refs}
\end{document}